\documentclass{article}
\usepackage{geometry}
\geometry{a4paper}
\usepackage{graphicx}


\usepackage{url}

\usepackage{subcaption}

\usepackage{mathrsfs}

\usepackage{amsthm}
\usepackage{amsmath,bm}
\usepackage{amssymb}

\newtheorem{theorem}{Theorem}
\newtheorem{definition}{Definition}
\newtheorem{remark}{Remark}
\newtheorem{example}{Example}
\newtheorem{lemma}{Lemma}
\newtheorem{corollary}{Corollary}
\newtheorem{proposition}{Proposition}
\newtheorem{claim}{Claim}

\usepackage{xcolor} 

\usepackage{tikz}
\usetikzlibrary{fit,shapes,arrows,positioning,decorations.pathmorphing}

\usepackage{bbm}


\newcommand{\Ob}{\mathcal{O}}

\newcommand{\D}{\mathrm{d}}
\newcommand{\Legendre}{\mathrm{P}}

\DeclareSymbolFont{mathroman}{OT1}{cmr}{m}{n}
\DeclareMathSymbol{e}{\mathalpha}{mathroman}{`e}
\DeclareMathSymbol{d}{\mathalpha}{mathroman}{`d}
\DeclareMathSymbol{i}{\mathalpha}{mathroman}{`i}
\DeclareMathSymbol{T}{\mathalpha}{mathroman}{`T}
\DeclareMathSymbol{J}{\mathalpha}{mathroman}{`J}
\DeclareMathSymbol{Y}{\mathalpha}{mathroman}{`Y}
\DeclareMathSymbol{H}{\mathalpha}{mathroman}{`H}



\newcommand\revisions[1]{#1}

\usepackage[affil-sl]{authblk}
\usepackage{fancyhdr}
\pagestyle{fancy}
\lhead{\leftmark}
\rhead{}
\lfoot{Preprint 2022}
\rfoot{Maierhofer, G., Iserles, A. \& Peake, N.}

\author[1]{G. Maierhofer}
\affil[1]{Laboratoire Jacques-Louis Lions\\
	Sorbonne University, France\\georg.maierhofer@sorbonne-universite.fr \vspace{1em}}
\author[2]{A. Iserles}	\author[2]{N. Peake}	
\affil[2]{Department of Applied Mathematics and Theoretical Physics\\
	University of Cambridge\\ UK}	

\usepackage[]{fancyhdr}
\lhead{}

\chead{\textit{\small This article has been accepted for publication in IMA Journal of Numerical Analysis published by Oxford University Press. The present version is the unreviewed author’s original version.}}
\begin{document}

\title{Recursive moment computation in Filon methods and application to high-frequency wave scattering in two dimensions}



\maketitle
\ \vspace{-11.5cm}\ \ \begin{center}\textit{\small This article has been accepted for publication in IMA Journal of Numerical Analysis published by Oxford University Press. The present version is the unreviewed author’s original version.}\end{center}\ \\\vspace{8.5cm}

\begin{abstract}
	{We study the efficient approximation of highly oscillatory integrals using Filon methods. A crucial step in the implementation of these methods is the accurate and fast computation of the Filon quadrature moments. In this work we demonstrate how recurrences can be constructed for a wide class of oscillatory kernel functions, based on the observation that many physically relevant kernel functions are in the null space of a linear differential operator whose action on the Filon interpolation basis is represented by a banded (infinite) matrix. We discuss in further detail the application to two classes of particular interest, integrals with algebraic singularities and stationary points and integrals involving a Hankel function. We provide rigorous stability results for the moment computation for the first of these classes and demonstrate how the corresponding Filon method results in an accurate approximation at truly frequency-independent cost. For the Hankel kernel, we derive \revisions{error estimates which describe} the convergence behaviour of the method in terms of frequency and number of Filon quadrature points. Finally, we show how Filon methods with recursive moment computation can be applied to compute efficiently integrals arising in hybrid numerical-asymptotic collocation methods for high-frequency wave scattering on a screen.
	}

	\,\\\noindent\textbf{Keywords:} {highly oscillatory integrals; numerical integration; Filon quadrature; wave scattering.}
\end{abstract}

		
		\section{Introduction}
		\label{sec:introduction}
		The efficient numerical approximation of highly oscillatory integrals is an essential step in the simulation of many physical systems involving high-frequency phenomena. Although efficient methods for the computation of certain highly oscillatory integrals had been discovered as early as the first half of the twentieth century by Louis Napoleon George \cite{filon1930iii}, and thorough research over the past decades has lead to an immense increase in efficiency and applicability of such methods \cite{deano2017computing}, many open problems remain. Amongst them is the computation of Filon quadrature moments, the so-called `moment-problem', which can be given in simple explicit form only for isolated instances of interpolation bases and oscillators. This hinders the direct application of Filon methods to several important classes of integrals which involve complicated oscillatory kernel functions, including integrals arising in hybrid numerical-asymptotic boundary integral methods for high-frequency wave scattering where oscillatory basis functions need to be integrated against a singular, oscillatory Green's function \cite{chandler2012numerical}.
		
		In the present work we address this problem by providing a method for the construction of recursive relations satisfied by the Filon quadrature moments, which leads to a very efficient strategy for finding the moments in a range of settings. This is a continuation of extensive work on Filon methods over the past two decades which was started by \cite{iserles2004,iserles2005} and \cite{iserlesnorsett2004} who were the first to provide a detailed asymptotic error analysis and an extension of the ideas presented by \cite{filon1930iii} specifically describing the favourable asymptotic properties of Filon methods in the high-frequency regime. Improvements to the Filon method by reducing the asymptotic error were introduced by \cite{iserlesnorsett2005efficient} through including information about the derivative values of the amplitude function $f$ and resulted in the development of the extended Filon method which was studied in greater detail by \cite{Gao2017a,Gao2017b}. With the goal to understand the quadrature error uniformly also for small frequencies, it was shown by \cite{melenk2010} that the error analysis of Filon methods for non-stationary oscillators can essentially be reduced to the study of the interpolation error of the amplitude function at the relevant quadrature points. Both \cite{melenk2010} (based on analyticity properties in a neighbourhood of the domain of integration) and \cite{Dominguez2011} (based on the regularity of the amplitude in certain periodic Sobolev spaces) use this observation to provide error estimates that are explicit in the frequency of oscillations as well as the number of interior quadrature points in the non-stationary case.
		
		The interest in extending these Filon methods from simple linear oscillators to more general kernels has led to work by \cite{olver2006,olver_2007} who described a moment-free version of the Filon method that is applicable to algebraic singularities and stationary points. A different type of approach involves recursive moment computation, which has been successfully applied to a number of individual cases in the context of Clenshaw--Curtis interior points: for integrals involving Bessel functions of linear arguments by \cite{piessens1983modified}, for \revisions{exponential oscillators with linear phase functions} by \cite{Dominguez2011} and for \revisions{exponential oscillators with linear phase and a logarithmic amplitude singularity by} \cite{dominguez2014}. \revisions{Our present work can be seen as a generalisation of these previous recursive approaches.} In a related context of computing indefinite integrals over oscillatory and singular functions using a Levin-type method, \cite{keller1999,keller2007method} described a method for the recursive computation of Chebyshev coefficients for functions that satisfy a linear differential equation with polynomial coefficients. This is based on earlier work by \cite{Lewanowicz1991} on the recursive computation of Jacobi coefficients of special functions satisfying similar differential equations. The ideas underpinning these final three studies, namely that the null space of certain differential operators can be related to expansions in a Hilbert basis whose coefficients satisfy recurrences, are closely related to Thm.~\ref{thm:recurrence_by_duality} in our present work.
		
		In recent years, high-frequency wave scattering has provided strong motivation for further advances in the development of highly oscillatory quadrature. \cite{dominguez2013filon} constructed a composite (graded) version of the Filon method that can be applied to arbitrary algebraic and logarithmic singularities, and which has been successfully applied to hybrid numerical-asymptotic methods in wave scattering by \cite{chandler2012numerical}, \cite{kim2012} and \cite{parolin2015}. While this method is already significantly better than traditional quadrature, the flexibility of this graded method comes at the price of \revisions{losing} some of the favourable asymptotic properties of Filon methods, and we will see in the present work how this may be overcome in certain cases by the construction of a direct Filon method for the corresponding integrals. An alternative approach to computing highly oscillatory integrals is numerical steepest descent which was introduced by \cite{huybrechs2006evaluation}. Numerical steepest descent has recently been applied by \cite{gibbs2019fast} to wave scattering problems on multiple screens \revisions{in the case where the screens are aligned} (see \cite{software_hnabemlab,software_pathfinder}). These results serve as a reference for the application of our methods to a collocation method in high-frequency wave scattering in \S\ref{sec:numerical_examples_scattering_on_screen}.
		
		
		The structure and main results of this manuscript are as follows: We begin with a general description of the extended Filon method as introduced by \cite{Gao2017a,Gao2017b} in \S\ref{sec:extended_filon_method}. This is setting the scene for the `moment-problem' in Filon methods, and specifically for our first main result, Thm.~\ref{thm:recurrence_by_duality}, which we prove in \S\ref{sec:moment_computation_by_duality}. The theorem provides a set of sufficient conditions for Filon moments to satisfy recurrences, and is based on the observation that many relevant interpolation bases are in fact a (scaled) Hilbert basis of a weighted $L^2$-space and that several relevant oscillators satisfy certain differential equations. Following two instructive examples, we focus on applying this methodology to Filon--Clenshaw--Curtis methods in \S\ref{sec:application_to_integrals_with_singularities} \& \S\ref{sec:application_to_wave_scattering}.
		
		Specifically, in \S\ref{sec:application_to_integrals_with_singularities} we construct a direct Filon--Clenshaw--Curtis method for integrals with either a stationary point or an algebraic singularity. In this section our main results are the rigorous stability analysis for the corresponding moment recurrences in theorems \ref{thm:full_stability_quadratic_oscillator} \& \ref{thm:square_root_stability_algebraic_singularity}. Although we focus our attention to the initial stability regime (which is most relevant for practical computations) we also indicate how one may use Oliver's algorithm \cite{oliver1968numerical} for the stable computation of the tail (which is mostly of theoretical interest since in practice the computational advantage of Filon methods over classical quadrature exists only when the number of required moments is smaller than the frequency of the oscillator $N\lesssim \omega$). Numerical examples are included in \S\ref{sec:numerical_examples_algebraic_singularities} demonstrating the advantage of this direct application of the Filon method over composite versions.
		
		The second major application of our methodology is described in \S\ref{sec:application_to_wave_scattering}, where we consider the direct construction of Filon methods for hybrid numerical-asymptotic collocation methods for high-frequency wave scattering on a screen. The first step in this construction is the proof of a Filon paradigm in Prop.~\ref{prop:filon_paradigm_for_alpha_zero}, which is a simple, but non-trivial result describing the asymptotic behaviour of the integral over the combination of a linear exponential oscillator and a Hankel function, the latter of which has a frequency dependent singularity in the domain of integration. This facilitates the study of error estimates that are explicit in both frequency and number of interior points in Corollary~\ref{cor:nu_explicit_error_estimates}. Although the stability for the relevant moment recurrences is non-tractable for analytic study, \revisions{these recurrences provide an extremely efficient way to compute the Filon moments in practice} when combined with the expressions for initial moments found in Lemma~\ref{lem:expression_for_initial_moments_linear_oscillator}. We evaluate the practical performance of the method based on an example of a hybrid numerical-asymptotic method describing the scattering of a Gaussian beam by a finite plate in \S\ref{sec:numerical_examples_scattering_on_screen}.
		
		Our results are summarised and an outlook towards future research directions is provided in the concluding remarks in \S\ref{sec:conclusions}.

		\section{The extended Filon method}\label{sec:extended_filon_method}
		We begin with a review of the extended Filon method as introduced by \cite{iserlesnorsett2005efficient} and \cite{Gao2017a,Gao2017b} based on the following generic form of a one-dimensional oscillatory integral:
		\begin{align*}
			I_\omega[f]=\int_{a}^b f(x)h_\omega(x)dx,\quad -\infty<a<b<\infty.
		\end{align*}
		Here the kernel function $h_\omega(x)$ is an $\omega$-oscillatory function, \revisions{by which we broadly mean a function whose oscillations depend on $\omega$. One may think of the example $h_\omega(x)=\exp(i\omega g(x))$ with a suitable choice of $g$, although in \S\ref{sec:application_to_wave_scattering} we will also study the possibility when $h_\omega(x)$ is expressed in terms of certain special functions.} Filon quadrature methods are designed to approximate $I_\omega[f]$ with good accuracy and at uniform cost when $\omega\gg1$.  \cite{iserlesnorsett2005efficient} observed that for $h_\omega(x)=\exp(i\omega g(x))$ with $g'(x)\neq 0, x\in(a,b),$ the asymptotic expansion of $I_\omega[f]$ for large $\omega$ depends only on the values $\mathcal{S}=\{f^{(j)}(a),f^{(j)}(b)\big|j=0,1,\dots\}$ and thus proposed to construct a Filon quadrature method by  computing $\mathcal{Q}_\omega^{[\nu,s]}[f]:=I_\omega[p],$ where $p$ is an interpolating polynomial of degree $2s+\nu+1$ satisfying the Hermite-type interpolation conditions
		\begin{align}\begin{split}\label{eqn:extended_Filon_interpolation_problem}
				\revisions{p^{(j)}(a)=f^{(j)}(a),\,\,\,p^{(j)}(b)=f^{(j)}(b),}\,\,\, j=0,\dots, s\,\,\,\,\text{\ and\ }\,\,\,\, p(c_l)=f(c_l),\quad l=1,\dots,\nu,\end{split}
		\end{align}
		for some specified interior interpolation points $a=c_0<c_1<\cdots<c_\nu<c_{\nu+1}=b$. Since the asymptotic behaviour of $I_\omega[f]$ is determined by the values $\mathcal{S}$, one can show that the asymptotic error of this quadrature method is
		\begin{align*}
			\left|\mathcal{Q}_\omega^{[\nu,s]}[f]-I_\omega[f]\right|=\mathcal{O}\left(\omega^{-s-2}\right),\quad \omega\rightarrow\infty,
		\end{align*}
		i.e. it can be made to decay at an arbitrary algebraic rate in $\omega$ so long as $f$ possesses a sufficient number of derivatives on $[a,b]$ (cf. \eqref{eqn:asymptotic_error_algebraic_singularities} and Prop.~\ref{prop:filon_paradigm_for_alpha_zero}). This idea extends more generally also to oscillators with stationary points and to higher dimensions: as long as the derivative values of $p$ match those of $f$ \revisions{up to certain order on a specified set of points, the asymptotic error of the Filon quadrature method constructed analogously to above decays in $\omega$. For integrals of the form $\int_\Omega f(\mathbf{x})\exp(i\omega g(\mathbf{x}))\,d x$ where $\Omega\subset \mathbb{R}^n$ is a polytope and $f,g$ are sufficiently differentiable, this set of points is known to consist of vertices of the polytope, stationary points (points $\mathbf{x}\in\Omega$ where $\bm{\nabla}g(\mathbf{x})=0$) and hidden stationary points (points $\mathbf{x}\in\partial\Omega$ where $\bm{\nabla}g(\mathbf{x})$ is orthogonal to $\partial\Omega$). For more details we refer the reader to \cite[\S2.4 \& \S 4]{deano2017computing}.}
		\subsection{The Achilles' heel of Filon methods: Moment computation}
		In practice the interpolation problem \eqref{eqn:extended_Filon_interpolation_problem} is solved by finding the coefficients of $p$ with respect to a given set of interpolation basis functions $\left\{\phi_n\right\}_{n=0}^{\nu+2s+1}$. These are typically polynomials on finite intervals, but we shall see an example below where $\phi_n$ can be taken to be Fourier modes (see example \ref{example:periodic_integral}). Although formally there is no difference in the particular choice of finite degree polynomial basis, the conditioning and (as explained in \S\ref{sec:fast_interpolation_at_FCC_points}) the speed of the interpolation algorithm are affected and, depending on the choice of interior nodes $c_{\revisions{l}}, \revisions{l}=1,\dots \nu$, in several cases it is advantageous to express the interpolating polynomial in terms of its coefficients with respect to a specific basis of orthogonal polynomials. Following the solution of the interpolation problem \eqref{eqn:extended_Filon_interpolation_problem} the polynomial $p$ is thus expressed in the form
		\begin{align*}
			p(x)=\sum_{n=0}^{\nu+2s+1}p_n\phi_n(x)
		\end{align*}
		and we compute the Filon quadrature as \revisions{$\mathcal{Q}_\omega^{[\nu,s]}[f]=\sum_{n=0}^{\nu+2s+1}p_nI_\omega[\phi_n]$}. Hence an important step in the Filon method is to evaluate the Filon quadrature moments $\{I_\omega[\phi_n]\}_{n=0}^{\nu+2s+1}$. For a general oscillator $h_\omega(x)$ this task is extremely tricky, mainly because explicit expressions for the moments are only rarely available or given in terms of special functions that are computationally expensive to evaluate. Thus it is fair to say that \textit{the computation of moments is the Achilles' heel of Filon methods}.
		
		\subsection{Fast interpolation at Filon--Clenshaw--Curtis points}\label{sec:fast_interpolation_at_FCC_points}
		Before embarking on a more thorough study of recursive moment computation, let us consider the choice of interior interpolation points in greater detail. A good choice of interior points $c_n$ is determined by a number of competing goals: accuracy for $\omega\gg 1$, uniform accuracy, simplicity of coefficients, and, for large values of $\nu$, minimization of computation cost for the interpolation problem. This aspect was studied in \cite[\S 4.2]{deano2017computing}, and it was first shown by \cite{Dominguez2011} that for finite intervals $[a,b]$ a particularly interesting choice of intermediate interpolation points are shifted Clenshaw--Curtis points: Suppose without loss of generality $a=-1,b=1$, then the Clenshaw--Curtis points are given by
		\begin{align}\label{eqn:definition_CC_points}
			c_{\revisions{l}}=\cos(\revisions{l}\pi/(\nu+1)), \quad \revisions{l}=0,\dots,\nu+1.
		\end{align}
		It is well-known that for $s=0$ the solution of the interpolation problem \eqref{eqn:extended_Filon_interpolation_problem} can be expressed as a finite linear combination of Chebyshev polynomials $T_n$ using a single application of a \textit{Discrete Cosine Transform}, DCT-I, bringing the cost of the interpolation part of the Filon method to just $\mathcal{O}(\nu\log\nu)$ operations \revisions{(cf. \cite{trefethen2008gauss} and references therein)}. The work by \cite{Dominguez2011} is particularly close to the present manuscript as it proposed a recursive approach for computing the Chebyshev moments $I_\omega[T_n]$ for a linear exponential oscillator $\exp(i\omega x)$. Our present work can be seen as a generalisation of this recursive approach. The advantageous interpolation properties of interior Chebyshev points carry over to non-zero values of $s$ as was shown by \cite{Gao2017a}: the interpolation problem \eqref{eqn:extended_Filon_interpolation_problem} for $p$ in a basis of Chebyshev polynomials can be solved very efficiently by a single application of DCT-I, and the solution of a small auxiliary linear system of size $2s\times 2s$, bringing the overall cost of interpolation to $\mathcal{O}(\nu\log\nu +\nu s+s^3)$. For full details on this procedure we refer the reader to the original work \cite{Gao2017a}.
		
		It is possible, however, for the asymptotic behaviour of the integral $I_\omega[f]$ to also depend on interior values of $f$, for instance in the presence of a stationary point of $h_\omega$. Subject to a few minor modifications, which we describe in Appendix A, the aforementioned procedure can also be used to solve efficiently the following interpolation problem: Compute $q$, the unique polynomial of degree $3s+\nu+1$ such that
		\revisions{\begin{align}\begin{split}\label{eqn:interpolation_problem_p2}
					q^{(j)}(0)&=f^{(j)}(0),\,\,\quad j=1,\dots,s,\\
					\,\, q^{(j)}(\pm1)&=f^{(j)}(\pm1),\,\,\,\, j=0,\dots,s,\\
					\,\,\,\,\text{\ and\ }\,\,\,\,q(c_l)&=f(c_l),\,\,\quad\,\,\,\,\, l=1,\dots,\nu,\end{split}
		\end{align}}
		where $c_{\revisions{l}}$ are as in \eqref{eqn:definition_CC_points} and this time we take $\nu$ to be odd, to ensure that $0=c_{(\nu-1)/2}$ is amongst the interior points. Indeed, as was the case for \eqref{eqn:extended_Filon_interpolation_problem}, the Chebyshev coefficients of $q$ can again be found in
		$\mathcal{O}(\nu\log\nu+\nu s+s^3)$ operations. We will use this result when considering integrals with stationary points and algebraic singularities at the origin in \S \ref{sec:application_to_integrals_with_singularities}.
		
		\section{Recursive moment computation in Filon methods}\label{sec:moment_computation_by_duality}
		
			As remarked above a crucial step in the Filon method is the accurate and efficient computation of the quadrature moments $I_\omega[\phi_n]$. In this section we present a constructive result that can be used to find recurrences for these quadrature moments in a range of settings, by regarding them as the coefficients in a Hilbert basis formed by the (appropriately scaled) interpolation basis. This generally yields a highly efficient way for computing Filon moments, provided initial conditions for the recurrence can be found.
			
			We note that a related result for the recursive computation of Chebyshev coefficients of functions satisfying linear ODEs with polynomial coefficients is given in Lemma 2.4 by \cite{keller2007method} and we also highlight similar work by \cite{Lewanowicz1991} for the recursive computation of Jacobi coefficients of functions satisfying linear ODEs with polynomial coefficients. The general constructive result is given in Thm.~\ref{thm:recurrence_by_duality}, but we shall relax some of the assumptions in the sequel. We provide a rigorous stability analysis of some of these types of recurrences for Filon--Clenshaw--Curtis methods in \S\ref{sec:application_to_integrals_with_singularities}. 
			
			\revisions{In the following we will, for some measurable weight function $W:[a,b]\rightarrow[0,\infty)$ such that $W(x) dx$ is a non-trivial Borel measure, denote by $L^{2}([a,b],W)$ the usual space of functions which are square integrable against the weight function $W$, together with the inner product 
				\begin{align*}
					\left( f,g\right)_{L^2([a,b],W)}=\int_a^b f(x) \overline{g(x)}W(x)dx.
				\end{align*}
				In the statement of the central theorem it will be helpful to refer to the following spaces of functions:
				\begin{definition}
					Given an orthonormal set of basis functions $\{\phi_n\}_{n\in\mathcal{I}}$ ($\mathcal{I}=\mathbb{N}$ or $\mathbb{Z}$) of $L^2([a,b],W)$, we denote by $H^{s}([a,b],W)$ for any $s\geq0$ the space
					\begin{align*}
						H^{s}([a,b],W):=\left\{f\in L^2([a,b],W)\,\Big|\, \|f\|_{H^{s}([a,b],W)}<\infty\right\},
					\end{align*}
					where we introduced the notation
					\begin{align*}
						\|f\|_{H^{s}([a,b],W)}:=\left(\sum_{m\in\mathcal{I}}[m]^{2s}\left|\left( f,\phi_m\right)_{L^2([a,b],W)}\right|^2\right)^{\frac{1}{2}},\quad [m]=\begin{cases}
							1,&m=0,\\
							|m|,&m\neq 0.
						\end{cases}
					\end{align*}
				\end{definition}\label{def:generalised_sobolev_spaces}
				Analogous to the theory of Sobolev spaces on periodic domains it is straightforward to check that $H^{s}([a,b],W)$ together with $\|\,\cdot\,\|_{H^{s}([a,b],W)}$ is a Hilbert space, that for any $s>t\geq 0$, $H^{s}([a,b],W)\subset H^{t}([a,b],W)$, and that $H^{0}([a,b],W)=L^2([a,b],W)$. Let us denote, for $s>0$, by $H^{-s}([a,b],W)$ the continuous dual space of $H^{s}([a,b],W)$. Since the dual space of $L^2([a,b],W)$ is represented by $L^2([a,b],W)$, we have a natural embedding $L^2([a,b],W)\subset H^{-s}([a,b],W)$ for any $s>0$. Similar to Sobolev spaces on periodic domains we then have a natural expression for the norm of $f\in L^2([a,b],W) \subset H^{-s}([a,b],W)$
				\begin{align*}
					\|f\|_{H^{-s}([a,b],W)}:=\sup_{g\in H^{s}([a,b],W)\setminus\{0\}}\left|\langle g,f\rangle_{s\times-s}\right|=\left(\sum_{m\in\mathcal{I}}[m]^{-2s}\left|\left( f,\phi_m\right)_{L^2([a,b],W)}\right|^2\right)^{\frac{1}{2}}
				\end{align*}
				where in the above we denoted by $\langle\,\cdot\,,\,\cdot\,\rangle_{s\times-s}:H^{s}([a,b],W)\times H^{-s}([a,b],W)\rightarrow \mathbb{C}$ the natural duality pairing. Note that for $f\in H^{s}([a,b],W)\cap L^2([a,b],W), g\in H^{-s}([a,b],W)\cap L^2([a,b],W)$ we have
				\begin{align}\label{eqn:special_expression_duality_pairing}
					\langle f,g\rangle_{s\times-s}=\left(f,g\right)_{L^{s}([a,b],W)}.
				\end{align}
				A simple argument then shows that $H^{-s}([a,b],W)$ is the completion of $L^2([a,b],W)$ with respect to the norm $\|\cdot\|_{H^{-s}([a,b],W)}$ i.e. that $L^2([a,b],W)$ is a dense subspace. These arguments are analogous to the theory of standard Sobolev spaces on periodic domains which can be found for instance in \cite[]{saranen2002periodic}. With these tools we are now able to provide a sufficient condition that ensures existence of recurrences for the moments of the Filon method.}
			
			\begin{theorem}\label{thm:recurrence_by_duality}
				Let $\{\phi_n\}_{n\in\mathcal{I}}$ ($\mathcal{I}=\mathbb{N}$ or $\mathbb{Z}$) be a \revisions{a complete orthonormal set of basis functions} of $L^2([a,b],W)$ \revisions{where $-\infty< a<b<\infty$ and $W:[a,b]\rightarrow[0,\infty)$ is a measurable weight function such that $W(x)dx$ is a non-trivial Borel measure}. Consider the moments expressed in the form
				\begin{align*}
					\sigma_n=\int_a^b \phi_n(x) h_\omega(x)W(x)dx.
				\end{align*}
				We assume the existence of a linear differential operator $\mathcal{L}_\omega$ of order $s\in\mathbb{N}$ such that $\mathcal{L}_\omega:H^{t+s}([a,b],W)\rightarrow H^{t}([a,b],W)$ \revisions{is bounded for some $t\geq 0$, that $h_\omega\in H^{t+s}([a,b],W)$ and $\mathcal{L}_\omega h_\omega=0$.} \revisions{Suppose further that} 
				the action of $\mathcal{L}_\omega$  on the conjugate of the basis functions is given by a banded (infinite) matrix $B_{mn}$ with bandwidth $k$, such that
				\begin{align*}
					\mathcal{L}_\omega\overline{\phi_n}&=\sum_{m\in\mathcal{I}} B_{nm}\overline{\phi_m}.
				\end{align*} 
				Then the moments satisfy a $k+1$-term recurrence relation, $\sum_{n\in\mathcal{I}} B^T_{mn}\sigma_n=0$, which together with $k$ initial or boundary conditions uniquely determines all moments.
			\end{theorem}
			\begin{proof}
				\revisions{Let us} consider the adjoint map $\mathcal{L}_{\omega}^*:H^{-t}([a,b],W)\rightarrow H^{-s-t}([a,b],W)$. Clearly, the set of complex conjugates $\{\overline{\phi_n}\}_{n\in\mathcal{I}}$ is also a Hilbert basis for \revisions{$L^2([a,b],W)$}. \revisions{As noted above,} \revisions{$L^2([a,b],W)$} is dense in $H^{-s-t}\revisions{([a,b],W)}$, meaning in particular that
				\begin{align*}
					\mathrm{cl}_{H^{-s-t}\revisions{([a,b],W)}}\left[\mathrm{span}\left(\overline{\phi_n}\right)_{n\in\mathcal{I}}\right]=\revisions{H^{-s-t}([a,b],W)}.
				\end{align*}
				Thus we can write $\mathcal{L}_{\omega}^* \overline{\phi_n}=\lim_{N\rightarrow\infty} \sum_{|m|\leq N} a_{nm}\overline{\phi_m}$ for some $a_{nm}\in\mathbb{C}$ 
				and since $\langle \mathcal{L}_\omega\overline{\phi_m},\overline{\phi_n}\rangle_{\revisions{t\times -t}}=\langle \overline{\phi_m},\mathcal{L}_{\omega}^*\overline{\phi_n}\rangle_{\revisions{t+s\times -t-s}}$ we have $a_{nm}=\overline{B_{mn}}$, i.e. $\mathcal{L}_{\omega}^* \overline{\phi_n}=\sum_{m\in\mathcal{I}} \overline{B_{mn}}\,\overline{\phi_m}$. Hence the action of the adjoint $\mathcal{L}_{\omega}^*$ on $\overline{\phi_{n}}$ is given in terms of the banded infinite matrix $\overline{B}^T$. We conclude the proof by noting that the moments are the coefficients of $h_\omega$ with respect to the basis $\{\overline{\phi_n}\}_{n\in\mathcal{I}}$. Therefore, for all $m\in\mathcal{I}$,
				\begin{align*}
					\sum_{n\in\mathcal{I}}B_{mn}^T\sigma_n&=\sum_{n\in\mathcal{I}}B_{mn}^T\left(h_\omega,\overline{\phi_n}\right)_{L^2\revisions{([a,b],W)}}=\langle h_\omega,\sum_{n\in\mathcal{I}}\overline{B_{nm}}\,\overline{\phi_n}\,\rangle_{\revisions{t+s\times-s-t}}\\&=\left\langle h_\omega,\mathcal{L}_{\omega}^* \overline{\phi_m}\right\rangle_{\revisions{t+s\times-s-t}}=\left\langle \mathcal{L}_\omega h_\omega,\overline{\phi_m}\right\rangle_{\revisions{t\times-t}}=0,
				\end{align*}
				where we used \revisions{\eqref{eqn:special_expression_duality_pairing} and} the fact that $B_{mn}$ is banded, so that all the sums are over a finite number of non-zero terms.
			\end{proof}
			\revisions{\begin{remark}\label{rmk:integration_by_parts}The central step in the above proof is to see that the formal operator which is represented by the matrix $\overline{B}^T$ corresponds to the continuous adjoint of the differential operator $\mathcal{L}_\omega$ with respect to the appropriate spaces as defined above. With this in mind we may, in essence, regard the final step in the proof
					\begin{align*}
						\left\langle h_\omega,\mathcal{L}_{\omega}^* \overline{\phi_m}\right\rangle_{\revisions{t+s\times-s-t}}=\left\langle \mathcal{L}_\omega h_\omega,\overline{\phi_m}\right\rangle_{\revisions{t\times-t}}
					\end{align*}
					as an integration-by-parts argument. In general, integration-by-parts would introduce a contribution from the boundary of the domain at $x=a,b$, and these terms are implicitly accounted for in $\mathcal{L}_{\omega}^*$. However, due to the representation of this operator by $\overline{B}^T$ these boundary terms do not affect the banded matrix representation. If our basis $\{\phi_n\}_{n\in\mathcal{I}}$ includes the constant function, i.e. without loss of generality if $\phi_0(x)=(\int_a^bW(y)dy)^{-1}$ then the boundary terms (which are constants) would introduce a column of infinitely many non-zero constants in $\overline{B}^T$, i.e. $\overline{B}^T_{0m}\neq 0$ for infinitely many $m\in\mathcal{I}$. Thus the matrix would no longer be banded. Therefore, whenever our basis includes a constant function (which is the case for all examples considered in this manuscript) the banded representation of $\mathcal{L}_\omega$ means that $\mathcal{L}_\omega$ is constructed such that all boundary terms in the above integration-by-parts step vanish.
			\end{remark}}
			\revisions{\begin{remark}In Def.~\ref{def:generalised_sobolev_spaces} and the statement and proof of Thm~\ref{thm:recurrence_by_duality} the only properties we required about the domain $[a,b]$ were the existence of a topology and the notion of a derivative ($\mathcal{L}_\omega$ is a differential operator). Therefore, the above holds equally if we replaced $[a,b]$ by an arbitrary closed subset of a topological vector space. In particular, Thm.~\ref{thm:recurrence_by_duality} can be proved analogously if we replace $[a,b]$ by the entire real line $(-\infty,\infty)$, a periodic interval $[0,2\pi)$ or a general closed subset $\mathcal{D}\subset \mathbb{R}^n$.
			\end{remark}}
			Note in several important cases it is possible to find initial conditions in terms of special functions, or, alternatively, in terms of simple integrals that can be approximated efficiently (for instance exponentially decaying integrals as in Lemma~\ref{lem:expression_for_initial_moments_linear_oscillator}). Moreover, the choice of $\mathcal{L}_\omega$ is not unique, but in practice it is often possible to spot a simple choice by inspection, leading to a low-order recurrence. Let us begin by illustrating the result with a simple example where recurrences for moments are already well-known:
			
			\begin{example}\label{example:periodic_integral}
				Consider $f\in \mathrm{L}_2\revisions{([0,2\pi))}\cap C_\mathrm{per}\revisions{([0,2\pi))}$ and the oscillatory integral
				\begin{align*}
					I_\omega[f]:=\int_{0}^{2\pi}e^{i\omega \cos x}f(x) dx.
				\end{align*}
				In this case a natural interpolation basis is the Fourier basis $\left\{\frac{1}{\sqrt{2\pi}}e^{in x}\right\}_{n\in\mathbb{Z}}$, which has good interpolation properties on equispaced points, and which is also a Hilbert basis for $L^2\revisions{([0,2\pi))}$. The oscillator in the weighted space is $h_\omega(x)=\sqrt{2\pi} e^{i\omega \cos x}$, satisfying
				\begin{align*}
					\mathcal{L}_\omega h_\omega=0,\quad \mathcal{L}_\omega=\frac{d}{dx}+i\omega\sin x.
				\end{align*}
				
				\revisions{Let us check carefully that the conditions of Thm.~\ref{thm:recurrence_by_duality} are satisfied. To begin with, we note that in the present setting the spaces $H^{s}(\revisions{[0,2\pi)},1)$ mentioned above restrict to the standard Sobolev spaces on the periodic domain $\revisions{[0,2\pi)}$. By the Sobolev embedding theorem, $H^{2}(\revisions{[0,2\pi)},1)\subset C^{1}_\mathrm{per}(\revisions{[0,2\pi)})$, i.e. any function in $H^2$ is at least once continuously differentiable (this is true in fact for $f\in H^{s}$ any $s>3/2$, but for us the weaker observation suffices). Moreover, one can easily check that $\mathcal{L}_\omega:H^{2}(\revisions{[0,2\pi)},1)\rightarrow H^{1}(\revisions{[0,2\pi)},1)$ is bounded, and that $h_\omega\in C^{\infty}_{\mathrm{per}}(\revisions{[0,2\pi)})\subset H^{2}(\revisions{[0,2\pi)},1)$.} Moreover,
				\begin{align*}
					\mathcal{L}_\omega \overline{\phi}_n=\left(\frac{d}{dx}+i\omega\sin x\right)\frac{1}{\sqrt{2\pi}}e^{-inx}=\frac{\omega}{2}\overline{\phi_{n-1}}-in \overline{\phi_n}-\frac{\omega}{2}\overline{\phi_{n+1}}.
				\end{align*}
				Thus by Thm.~\ref{thm:recurrence_by_duality} we deduce that the Filon moments, $\sigma_n=I_\omega[\phi_n]$ must satisfy the following recurrence
				\begin{align}\label{eqn:Bessel_recurrence}
					-\frac{\omega}{2}\sigma_{n-1}-i n \sigma_n+\frac{\omega}{2}\sigma_{n+1}=0.
				\end{align}
				This recurrence provides a highly efficient way of computing the moments, and we have actually recovered a well-known relation: In the present case the moments can be expressed in terms of Bessel functions of the first kind, $\mathrm{J}_n$ (cf. the integral expression \cite[Eq.~9.1.21]{abramowitz1965handbook})
				\begin{align*}
					\sigma_n=\sqrt{2 \pi} e^{\frac{i \pi n}{2}} \mathrm{J}_{n}(\omega)
				\end{align*}
				and the recurrence \eqref{eqn:Bessel_recurrence} is equivalent to the Bessel recurrence satisfied by $\mathrm{J}_n$ \cite[Eq.~9.1.27]{abramowitz1965handbook}.
			\end{example}
			The next example concerns a case where, to the best of our knowledge, recurrences are not yet readily available in the literature:
			\begin{example}\label{example:Legendre_recurrences_quadratic_oscillator}
				For our second example we consider an integral over $[-1,1]$ with a quadratic oscillator,
				\begin{align*}
					I_\omega[f]:=\int_{-1}^1 e^{i\omega x^2}f(x)dx.
				\end{align*}
				One possible choice of interpolation basis is the use of Legendre polynomials (when $s=0$ in \eqref{eqn:interpolation_problem_p2}). This choice is guided by the idea that interpolating $f$ at Legendre points optimizes the order of the method when $\omega=0$, as described by \cite[\S 4.2.1]{deano2017computing}. Thus, we choose $\phi_n=\tilde{\Legendre}_n:=\sqrt{n+\frac12}\,\Legendre_n,\,\,\revisions{ n=0,1\dots,}$ where $\Legendre_n$ are Legendre polynomials with the standard normalisation $\Legendre_n(1)=1$ and $\tilde{\Legendre}_n$ are scaled such that they form a Hilbert basis for $L^2([-1,1])$. The oscillator $h_\omega(x)=\exp(i\omega x^2)$ satisfies
				\begin{align*}
					\mathcal{L}_\omega h_\omega=0,\quad \mathcal{L}_\omega=(x^2-1)\frac{d}{dx}-2xi\omega (x^2-1).
				\end{align*}
				\revisions{Let us confirm that $\mathcal{L}_\omega$ and $h_\omega$ satisfy the assumptions of Thm.~\ref{thm:recurrence_by_duality}.} We have chosen $\mathcal{L}_\omega$ specifically with the following two identities in mind \cite[Eqs. 22.8.5 \& 22.7.10]{abramowitz1965handbook}:
				\begin{align}\label{eqn:banded_identities_Legendre_polys1}
					\frac{x^2-1}{n}\frac{d}{dx}\Legendre_n(x)&=\frac{n+1}{2n+1}\Legendre_{n+1}(x)-\frac{n+1}{2n+1}\Legendre_{n-1}(x), \quad n\geq 1, \text{\ and\ }(x^2-1)\frac{d}{dx}\Legendre_0(x)=0,\\\label{eqn:banded_identities_Legendre_polys2}
					x\Legendre_n(x)&=\frac{n+1}{2n+1}\Legendre_{n+1}(x)+\frac{n}{2n+1}\Legendre_{n-1}(x), \quad n\geq 1, \text{\ and\ }x\Legendre_0(x)=\Legendre_{1}(x).
				\end{align}
				The identities \eqref{eqn:banded_identities_Legendre_polys1}-\eqref{eqn:banded_identities_Legendre_polys2} ensure that the action of $\mathcal{L}_\omega$ on the basis $\left\{\tilde{\Legendre}_n\right\}_{n=0}^\infty$ can indeed be represented by a banded infinite matrix. In fact, one may use \eqref{eqn:banded_identities_Legendre_polys1}-\eqref{eqn:banded_identities_Legendre_polys2} in an analogous way to find a suitable differential operator for any oscillators of the form $\exp(i\omega q(x))$ when $q(x)$ is a polynomial. \revisions{To understand the continuity properties of $\mathcal{L}_\omega$ we observe that for any function $f\in H^{4}([-1,1],1)$ we know there is a constant $C_f>0$ such that
					\begin{align*}
						\left|\left(f,\tilde{\Legendre}_n\right)_{L^{2}([-1,1],1)}\right|\leq C_f n^{-4}.
					\end{align*}
					Since $\sup_{x\in[-1,1]}|\tilde{\Legendre}_n|\leq \sqrt{n+1/2}$ we therefore find that the sum 
					\begin{align*}
						\sum_{n=0}^\infty \left(f,\tilde{\Legendre}_n\right)_{L^{2}([-1,1],1)}\tilde{\Legendre}_n(x)
					\end{align*}
					converges absolutely uniformly. Moreover we have the following identity for w, by boundedness of $\Legendre_n$ that 
					converges uniformly absolutely. From \eqref{eqn:banded_identities_Legendre_polys1} \& \eqref{eqn:banded_identities_Legendre_polys2} combined we have
					\begin{align*}
						\frac{d}{dx}\Legendre_{n+1}(x)=(n+1)\Legendre_n(x)+x\frac{d}{dx}\Legendre_n(x), \quad \frac{d}{dx}\Legendre_0(x)=0,
					\end{align*}
					whence it follows by induction $\sup_{x\in[-1,1]}|\tilde{\Legendre}_n|\leq \sqrt{n+1/2} n(n+1)/2$. This means also
					\begin{align*}
						\sum_{n=0}^\infty \left(f,\tilde{\Legendre}_n\right)_{L^{2}([-1,1],1)}\frac{d}{dx}\tilde{\Legendre}_n(x)
					\end{align*}
					converges absolutely uniformly and we thus have for any element $f\in H^{4}([-1,1],1)$:
					\begin{align*}
						\mathcal{L}_\omega f(x)=\sum_{n=0}^\infty \left(f,\tilde{\Legendre}_n\right)_{L^{2}([-1,1],1)}\mathcal{L}_\omega \tilde{\Legendre}_n(x).
					\end{align*}
					From \eqref{eqn:banded_identities_Legendre_polys1} \& \eqref{eqn:banded_identities_Legendre_polys2} we thus conclude that $\mathcal{L}_\omega:H^{4}([-1,1],1)\rightarrow H^{3}([-1,1],1)$ is bounded. Since $h_\omega\in C^{\infty}([-1,1])$ its Legendre coefficients decay faster than any polynomial and it immediately follows that $h_\omega\in H^{s}([-1,1],1)$ for all $s\geq 0$. Therefore the assumptions of Thm.~\ref{thm:recurrence_by_duality} are satisfied and the result} allows us to construct (after a few steps of algebra) the following recurrence,
				\begin{align*}
					\begin{split}
						&-\frac{2 i (n-2) (n-1) n \omega }{\sqrt{2 n-5} (2 n-3) (2 n-1) \sqrt{2 n+1}}\revisions{\sigma}_{n-3}+\frac{n \left(2 i \omega(n^2-3) +(2n-3)(2n+3)(n-1)\right)}{(2 n-3) \sqrt{2 n-1}
							\sqrt{2 n+1} (2 n+3)}\revisions{\sigma}_{n-1}\\
						&+\frac{(n+1) \left(2i\omega(n^2+2n-1)-(2n-1)(n+2)(2n+5)\right)}{(2 n-1) \sqrt{2 n+1} \sqrt{2 n+3} (2 n+5)}\revisions{\sigma}_{n+1}-\frac{2 i (n+1) (n+2) (n+3) \omega }{\sqrt{2 n+1} (2 n+3) (2 n+5) \sqrt{2 n+7}}\revisions{\sigma}_{n+3}=0
					\end{split}
				\end{align*}
				valid for $n\geq 3$, where the moments are $\sigma_n=I_\omega[\tilde{\Legendre}_n],\, n\geq 0$. Additionally, the first column of the matrix representation $B_{nm}$ of $\mathcal{L}_\omega$ gives rise to the extra condition
				\begin{align*}
					\sigma_4&=\revisions{\frac{\sqrt{5}}{3}}\frac{5(21 i+2\omega)}{24\omega}\sigma_{2}+\revisions{\frac{1}{3}}\frac{7}{12}\sigma_0,
				\end{align*}
				which means that the moments $\sigma_{2n}, n\geq 0,$ can be computed from just two initial conditions for which we have the following expressions:
				\begin{align*}
					\sigma_0&=\frac{e^{i\frac{\pi}{4}}}{\sqrt{2\omega}}\left(\gamma\left(\frac{1}{2},-i\omega\right)\right),\,
					\sigma_2=\sqrt{\frac{5}{2}}\left[\frac{3}{2}\frac{e^{i\omega}}{i\omega}-\left(\frac{3}{4i\omega}+\frac{1}{2}\right)\frac{e^{i\frac{\pi}{4}}}{\sqrt{\omega}}\left(\gamma\left(\frac{1}{2},-i\omega\right)\right)\right],
				\end{align*}
				where $\gamma(\cdot,\cdot)$ is the lower incomplete Gamma function \cite[Eq.~6.5.2]{abramowitz1965handbook}. Of course, for all moments of odd order, $\sigma_{2n+1}=0,\, n\geq 0,$ since $\exp(i\omega x^2)$ is an even function. We note that many efficient methods exist for computing the incomplete Gamma function \cite{cody1976,gautschi1979computational}, so the above expressions constitute a suitable way of initiating the recurrence.
			\end{example}
			
			In a similar spirit to Example \ref{example:Legendre_recurrences_quadratic_oscillator} one may choose other orthogonal polynomials as interpolation \revisions{bases} (and their zeros as corresponding interior nodes) in an attempt to maximize the \revisions{classical order of the quadrature when $\omega=0$ with the goal to ensure that the resulting Filon method has good convergence properties for all $\omega\geq 0$ (cf. \cite[\S4.2.1]{deano2017computing} and \cite{Gao2017b}).} In many cases one can use a similar approach to the above and exploit the three-term recurrence of orthogonal polynomials to extract recurrences for the moments in this manner.
			\subsection{Recursive moment computation for Filon--Clenshaw--Curtis methods}\label{sec:recursive_moment_computation_for_FCC}
			In the remainder of this paper we shall focus our attention to Filon--Clenshaw--Curtis methods, motivated by fast interpolation properties as described in \S\ref{sec:fast_interpolation_at_FCC_points}. We have seen in the previous section how one may find a recursion for the Filon quadrature moments when considering an integral of the form
			\begin{align*}
				I_\omega[\revisions{f}]=\int_{-1}^1f(x)h_\omega(x)\frac{dx}{\sqrt{1-x^2}},
			\end{align*}
			using an interpolation basis of normalised Chebyshev polynomials $\phi_n(x)=\sqrt{s_n}\,T_n(x),$ where $s_0=1/\pi,s_n=2/\pi,n\geq1$, are chosen such that$\left\{\sqrt{s_n}\,T_n\right\}_{n=0}^\infty$ forms a Hilbert basis for $L^2([-1,1],(1-x^2)^{-1/2})$. In Thm.~\ref{thm:recurrence_by_duality} we made the assumption that $h_\omega\in H^{s}([a,b],(1-x^2)^{-1/2})$ where $s\in\mathbb{N}$ is the order of the linear differential operator which maps $h_\omega$ to zero. It turns out that it can be desirable to relax this assumption. To see why, let us consider an integral of the form
			\begin{align*}
				\tilde{I}_\omega[f]=\int_{-1}^1f(x)\tilde{h}_\omega(x)dx,
			\end{align*}
			where to begin with we take $\tilde{h}_\omega(x)\in C^{1}([-1,1])$. The enormous speed up achieved by the use of Clenshaw--Curtis points in the Filon method makes those an excellent choice for interior interpolation points even when the weight function does not match (in a spirit similar to classical quadrature where Clenshaw--Curtis points can be preferable to optimal Legendre points as noted by \cite{trefethen2008gauss}). This means we need to compute the moments
			\begin{align*}
				\tilde{I}_\omega[\phi_n]=\sqrt{s_n}\int_{-1}^1T_n(x)\tilde{h}_\omega(x)dx.
			\end{align*}
			To ensure the interpolation basis is a Hilbert basis such that we can apply a methodology similar to Thm.~\ref{thm:recurrence_by_duality} it is thus appropriate to write $\tilde{I}_\omega[f]$ in the form
			\begin{align}\label{eqn:extracted_weight_function_FCC}
				\tilde{I}_\omega[\revisions{f}]=\int_{-1}^1f(x)h_\omega(x)\frac{dx}{\sqrt{1-x^2}},
			\end{align}
			where $h_\omega(x)=\sqrt{1-x^2}\tilde{h}_\omega(x)$. Clearly, the extra factor $\sqrt{1-x^2}$ weakens the regularity of $h_\omega(x)$, and we no longer expect $h_\omega(x)\in C^{1}([-1,1])$. This regularity was used in the proof of Thm.~\ref{thm:recurrence_by_duality} at the point where we showed that the formal adjoint defined component-wise by $\mathcal{L}_{\omega}^*\overline{\phi_n}=\sum_{m\in\mathcal{I}}\overline{B_{mn}}\,\overline{\phi_m}$ satisfied
			\begin{align}\label{eqn:moving_adjoint_to_other_side}
				\langle h_\omega,\mathcal{L}_{\omega}^*\overline{\phi_n}\rangle\rangle_{\revisions{t+s\times-s-t}}=\revisions{\langle\mathcal{L}_\omega h_\omega,\overline{\phi_n}\rangle_{\revisions{t\times-t}}}.
			\end{align}
			We can overcome this by constructing $\mathcal{L}_\omega$ in a suitable way: Let us follow the convention $T_{-n}(x)=T_n(x)$, then one can show using standard trigonometric identities:
			\begin{lemma}[\cite{abramowitz1965handbook}, Eqs. 22.7.4 \& 22.8.3]\label{lem:banded_operators_Chebyshev_polynomials}For all $n\geq \mathbb{Z}$:
				\begin{align*}
					xT_n(x)&=\frac{1}{2}T_{n-1}(x)+\frac{1}{2}T_{n+1}(x),\,\,\text{and}\,\,
					(1-x^2)T_n'(x)=\frac{n}{2}T_{n-1}(x)-\frac{n}{2}T_{n+1}(x).
				\end{align*}
				In particular, the actions of $x,(1-x^2)d/dx$ on $\left\{\sqrt{s_n}\,T_n\right\}_{n=0}^\infty$ are both banded, with bandwidth 3.
			\end{lemma}
			These operators ensure \eqref{eqn:moving_adjoint_to_other_side} holds even in cases when $\sqrt{1-x^2}h_\omega(x)$ is not sufficiently regular to satisfy the assumptions of Thm.~\ref{thm:recurrence_by_duality}. Indeed, consider the operator $\mathcal{L}=(1-x^2)d/dx$ and suppose $h_\omega$ is $C^1([a,b])$. Then by simple integration by parts we have:

			\begin{align}\begin{split}\label{eqn:general_integration_by_parts_to_take_adjoint}
					\int_{-1}^1&T_n(x)\mathcal{L}\left(\sqrt{1-x^2}h_\omega(x)\right)\frac{dx}{\sqrt{1-x^2}}=\lim_{\epsilon\rightarrow 0^+}\int_{-1+\epsilon}^{1-\epsilon}T_n(x)(1-x^2)\frac{d}{dx}\left(\sqrt{1-x^2}h_\omega(x)\right)\frac{dx}{\sqrt{1-x^2}}\\
					&=\lim_{\epsilon\rightarrow 0^+}\left[T_n(x)(1-x^2)h_\omega(x)\right]_{-1+\epsilon}^{1-\epsilon}-\lim_{\epsilon\rightarrow 0^+}\int_{-1+\epsilon}^{1-\epsilon}\frac{d}{dx}\left(\sqrt{1-x^2}T_n(x)\right)\left(\sqrt{1-x^2}h_\omega(x)\right)\frac{dx}{\sqrt{1-x^2}}\\
					&=\revisions{-}\int_{-1}^{1}\frac{d}{dx}\left(\sqrt{1-x^2}T_n(x)\right)\left(\sqrt{1-x^2}h_\omega(x)\right)\frac{dx}{\sqrt{1-x^2}}=\int_{-1}^{1}\mathcal{L}^* T_n(x)\left(\sqrt{1-x^2}h_\omega(x)\right)\frac{dx}{\sqrt{1-x^2}},
				\end{split}
			\end{align}
			meaning any operator formed as a combination of polynomial multiplication and $\mathcal{L}$ will still satisfy \eqref{eqn:moving_adjoint_to_other_side}. In fact the same integration by parts argument can be applied if $\tilde{h}_\omega$ satisfies an ordinary differential equation with polynomial coefficients that has a simple singularity in the interior of the domain. \revisions{This construction of $\mathcal{L}_\omega$ such that the boundary terms in integration-by-parts vanish reflects our observations from Remark~\ref{rmk:integration_by_parts}.} We shall demonstrate the principle on two types of integrals in greater detail: Integrals with stationary points/algebraic singularities in \S\ref{sec:application_to_integrals_with_singularities}, and integrals involving Hankel functions and hybrid numerical-asymptotic basis functions in \S\ref{sec:application_to_wave_scattering}.
			
			\section{Application to integrals with algebraic singularities and stationary points}\label{sec:application_to_integrals_with_singularities}
			Consider the case of algebraic singularities or stationary points at $x=0$, i.e. integrals of the form
			\begin{align*}
				I^{(1)}_\omega[f]&=\int_{-1}^{1}f(x)e^{i\omega x^r}dx,\,\, r\in\mathbb{N},\, r\geq2,\,\,\,\,\text{and}\,\,\,\,
				I^{(2)}_\omega[f]=\int_{-1}^{1}\mathrm{sgn}(x)|x|^\alpha e^{i\omega x}f(x)dx,\,\,\alpha\in(-1,1).
			\end{align*}
			By using the simple change of variable $y=x^r$ the integral $I^{(1)}_\omega[f]$ can be brought into the form $I^{(2)}_\omega[f]$. In fact, by the inverse function theorem, an integral with a general oscillator $h_\omega(x)=\exp(i\omega g(x))$ with $g^{(r)}(0)=0, g^{(r+1)}(0)\neq 0, g'(x)\neq 0,\,\forall x\neq 0,$ can also be brought into the above forms, by substituting $g(x)=y^r$, or, equivalently, as noted by \cite{olver_2007}, by choosing an interpolation basis that is in the span of $\{\mathrm{sgn}(x)g'(x)|g(x)|^{(n+1-r)/r}\}_{n=0}^{\nu+2s+1}$. These types of integrals were considered in the Filon context by \cite{olver_2007} and \cite{dominguez2013filon}. To illustrate the main ideas we focus on the integral $I_\omega^{(2)}[f]$. Here the natural basis described by \cite{olver2006,olver_2007} essentially reduces to a monomial interpolation basis, $x^n$, and the central observation is that its moments can be expressed explicitly \revisions{in terms of} the lower incomplete gamma function $\gamma$:
			\begin{align}\label{eqn:explicit_expression_standard_moments_algebraic_singularities}
				I_\omega^{(2)}[x^n]=\left(-i\omega\right)^{-1-n-\alpha}\left(\gamma(1+n+\alpha,-i\omega)\right)+\left(i\omega\right)^{-1-n-\alpha}\left(\gamma(1+n+\alpha,i\omega)\right), \quad n\geq 0.
			\end{align}
			This approach is particularly suitable when only a small number $\nu$ of interior interpolation points and hence moments are required. However, if we choose $\nu$ at moderate or large size relative to $s$ the cost of interpolating with standard polynomials increases rapidly. Yet resolving to fast interpolation at Clenshaw--Curtis points is seemingly prevented by the well-known exponential instability of computing $I^{(2)}_\omega[T_n]$ through directly expanding $T_n$ in terms of $x^n$. \cite{dominguez2013filon} approached the problem from a slightly different perspective, which also applies to integrals of the form $I^{(2)}_\omega[f]$, choosing a mesh $-1=x_{-M}<\cdots<x_{-1}<0<x_1<\cdots<x_M=1$ that is graded towards the algebraic singularity/stationary point at $x=0$. The method then evaluates the integrals using the classical non-singular Filon method on each subinterval $[x_{l},x_{l+1}], -M\leq l\leq -2,1\leq l\leq M-1$ and sets the approximation of the integral on $[x_{-1},x_{1}]$ equal to zero, when $-1<\alpha<0$. While this approach is quite flexible, as it avoids having to know the exact type of singularity at $x=0$, this flexibility comes at the price of asymptotic sub-optimality, because we know from the method of stationary phase, for any $-1<\alpha<0$, and $\epsilon>0$ fixed:
			\begin{align}\label{eqn:missmatch_asymptotic_behaviour_dominguez}
				\int_{-\epsilon}^{\epsilon}\text{sgn}(x)|x|^\alpha e^{i\omega x}f(x)dx\sim	I^{(2)}_\omega[f],\quad \text{as\ }\omega\rightarrow\infty,
			\end{align}
			i.e. we have asymptotic concentration near the singularity. Thus the method proposed by \cite{dominguez2013filon} leads, as $\omega$ increases, to an absolute error that is of the same size as the original integral (this is demonstrated in practical examples in \S\ref{sec:numerical_examples_algebraic_singularities}). \revisions{One can mitigate this effect by modifying the grading as $\omega$ increases, leading to so-called `adaptive Filon methods' which have been explored by \cite{gao2018adaptive}. However, as demonstrated in \cite{gao2018adaptive} (see also the discussion in \cite[\S 4.4]{deano2017computing}) this approach can be challenging because the grading needs to be carefully designed to accommodate the specific type of singularity at hand (\cite{gao2018adaptive} have so far only studied the case when $\alpha=0$ and $\alpha=-0.5$). Moreover, if the interior points are allowed to depend on $\omega$, the computational advantages of fast interpolation at special points (e.g. at Clenshaw--Curtis points) are entirely lost.}
			
			A resolution of \revisions{these} approaches can be found based on the methodology from Thm.~\ref{thm:recurrence_by_duality}: We can apply a direct version of the Filon method to integrals of the form $I^{(2)}_\omega[f]$ and still interpolate at Clenshaw--Curtis points as in \eqref{eqn:interpolation_problem_p2}, by computing the Chebyshev moments $I^{(2)}_\omega[T_n]$ accurately and efficiently using a recurrence initialised with exact expressions in \eqref{eqn:explicit_expression_standard_moments_algebraic_singularities}. Indeed, one can easily check using Watson's lemma \cite[pp. 263--265]{bender2013advanced} that if $q$ satisfies the interpolation conditions \eqref{eqn:interpolation_problem_p2}, and $f\in C^{s+2}([-1,1])$ then the direct Filon quadrature $\mathcal{Q}^{[\nu,s]}_\omega[f]:=I^{(2)}_\omega[q]$
			satisfies
			\begin{align}\label{eqn:asymptotic_error_algebraic_singularities}
				\left|I^{(2)}_\omega[f]-\mathcal{Q}^{[\nu,s]}_\omega[f]\right|=\mathcal{O}\left(\omega^{-(s+2)-\min\{0,\alpha\}}\right),\quad \omega\rightarrow\infty.
			\end{align}
			We note that an explicit derivation of \eqref{eqn:asymptotic_error_algebraic_singularities} shows that there is $C_{\alpha,s}>0$, dependent on $\alpha,s$, such that
			\begin{align}\label{eqn:Filon_error_as_interpolation_error_algebraic_singularities}
				\left|I^{(2)}_\omega[f]-\mathcal{Q}^{[\nu,s]}_\omega[f]\right|\leq C_{\alpha,s}\min_{0\leq j\leq s}\omega^{-j-2}\left(\omega^{-\alpha}\|f^{(j)}-q^{(j)}\|_{L^{\infty}([-1,1])}+\|f^{(j+1)}-q^{(j+1)}\|_{L^{\infty}([-1,1])}\right)
			\end{align}
			thus allowing us to account for the dependency on $\nu$ by studying the quality of interpolation of $f$ by $q$. This principle is the same as for a non-stationary oscillator as described by \cite[\S1.1]{melenk2010}. In the interest of brevity we omit the details of this derivation here, and instead refer to an analogous argument for a Hankel oscillator which we provide in Prop.~\ref{prop:filon_paradigm_for_alpha_zero} and Cor. \ref{cor:nu_explicit_error_estimates}. Having understood that the direct application of a Filon method truly matches the asymptotic behaviour of the integral it remains to compute the moments $\sigma_n=I^{(2)}_\omega[\phi_n]=I^{(2)}_\omega[\sqrt{s_n}\,T_n]$. In line with \eqref{eqn:extracted_weight_function_FCC} we write $h_\omega(x)=\sqrt{1-x^2}\mathrm{sgn}(x)|x|^\alpha \exp\left(i\omega x\right)$. We can now follow the recipe of Thm.~\ref{thm:recurrence_by_duality}: A suitable differential operator is given by
			\begin{align*}
				\mathcal{L}_\omega=x(1-x^2)\frac{d}{dx}+x^2-\alpha (1-x^2)-i\omega x(1-x^2).
			\end{align*}
			which is such that $\mathcal{L}_\omega h_\omega(x)=0$ pointwise for all $x\neq0$ (and including $x=0$ when $\alpha\geq0$). Moreover, in the same way as in \eqref{eqn:general_integration_by_parts_to_take_adjoint} we can use integration by parts (this time also excluding a small neighbourhood of $x=0$) to take the adjoint of $\mathcal{L}_\omega$, thus ensuring that \eqref{eqn:moving_adjoint_to_other_side}, and hence the conclusion of Thm.~\ref{thm:recurrence_by_duality}, hold.
			After a few steps of algebra this results in the following recurrence, where for ease of notation we introduced $\tilde{\sigma}_n:=\sigma_n/\sqrt{s_n}$ and follow the convention $\sigma_{-n}=\sigma_n$:
			\begin{align}\label{eqn:recurrence_algebraic_singularities_chebyshev_moments}
				\tilde{\sigma}_{n-3}+\frac{2(-(n-3)+\alpha)}{	i\omega}\tilde{\sigma}_{n-2}-\tilde{\sigma}_{n-1}+\frac{4-4\alpha}{	i\omega}\tilde{\sigma}_n-\tilde{\sigma}_{n+1}+\frac{2(n+3+\alpha)}{i\omega}\tilde{\sigma}_{n+2}+ \tilde{\sigma}_{n+3}=0, \quad \forall n\in\mathbb{Z}.
			\end{align}
			This means in particular that
			the initial values $\tilde{\sigma}_0,\tilde{\sigma}_1=\tilde{\sigma}_{-1},\tilde{\sigma}_2=\tilde{\sigma}_{-2}$ are sufficient in order to compute the moments using \eqref{eqn:recurrence_algebraic_singularities_chebyshev_moments}. For those we have the explicit expressions in terms of the lower incomplete gamma function based on \eqref{eqn:explicit_expression_standard_moments_algebraic_singularities}:
			\begin{align*}
				\tilde{\sigma}_0&=I^{(2)}_\omega[\revisions{1}],\quad \tilde{\sigma}_1=I^{(2)}_\omega[\revisions{x}],\quad \tilde{\sigma}_1=2I^{(2)}_\omega[x^2]-I^{(2)}_\omega[\revisions{1}].
			\end{align*}
			
			\subsection{Stability analysis of the recurrences}\label{sec:stability_analysis_of_the_recurrences}
			In this section we seek to understand the stability of moment recurrences for the integrals $I_\omega^{(1)},I_\omega^{(2)}$. Similar to work by \cite{piessens1983modified} and \cite{Dominguez2011} we find there is a balance between $N$, the number of required moments, and $\omega$ which results in two regions of different behaviour: 
			\begin{itemize}
				\item \textbf{The initial regime, when $\bm{N\ll\omega}$:} Here the recurrences lead at worst to algebraic instabilities, which are moderate relative to the decay of the interpolation coefficients for sufficiently smooth $f$. We provide rigorous results for two cases of interest in \S\ref{sec:initial_stability_results_algebraic_singularities} below.
				\item \textbf{The tail of the recurrences, when $\bm{N\gg\omega}$:} This is mostly of theoretical interest, since in practice, if we require $N\sim \omega$ moments, Gaussian quadrature applied to the full integral will be of comparable cost and error to the Filon method thus there is no necessity to resort to Filon methods any longer. Nevertheless, in \S\ref{sec:tail_stability_algebraic_singularities} we provide an indication of the behaviour of the recurrences in this regime which can be used in practice to compute them stably using Oliver's algorithm \cite{oliver1968numerical}. \revisions{In order to apply Oliver's algorithm typically an approximation of moments for large indices is required, i.e. approximate values for $\tilde{\sigma}_{N},\dots,\tilde{\sigma}_{N+M}$ for some $N\gg \omega, M\in\mathbb{N}$. These can be approximated using the asymptotic expansion of $\tilde{\sigma}_N$ as $N\rightarrow\infty$ for fixed $\omega$ which is obtained from the method of stationary phase \cite{bender2013advanced}. This was done elegantly in the case of an exponential oscillator without stationary point by \cite{Dominguez2011}.} 
			\end{itemize}
			\subsection{Stability results for the initial regime}\label{sec:initial_stability_results_algebraic_singularities}
			We note, firstly, that for a linear oscillator, i.e. integrals of the form $\int_{-1}^1f(x)e^{i\omega x}dx$, it was shown by \cite{Dominguez2011} that \revisions{an equivalent} recurrence to the one found through the application of Thm.~\ref{thm:recurrence_by_duality} is algebraically stable for $n<\omega$. Our first stability result shows that a very similar analysis can be applied to the case of a simple stationary point at $x=0$, i.e. integrals of the form $I_\omega[f]=\int_{-1}^1f(x)e^{i\omega x^2}dx$.
			Although this case is covered by the recurrence \eqref{eqn:recurrence_algebraic_singularities_chebyshev_moments} we can find a simpler version by noting that $\mathcal{L}_\omega h_\omega=0$ where
			\begin{align*}
				h_\omega(x)=\sqrt{1-x^2}e^{i\omega x^2},\quad \mathcal{L}_\omega=(1-x^2)\frac{d}{dx}-i\omega2x(1-x^2)+x.
			\end{align*}
			The resulting recurrence satisfied by the moments is (again writing $\tilde{\sigma}_n=\sigma_n/\sqrt{s_n}$ and $\tilde{\sigma}_{-n}=\tilde{\sigma}_n$):
			\begin{align}\label{eqn:recurrence_quadratic_oscillator_chebyshev_moments1}
				\tilde{\sigma}_{n-3}+\left(-1-\frac{2(n-2)}{i\omega}\right)\tilde{\sigma}_{n-1}+\left(-1+\frac{2(n+2)}{i\omega}\right)\tilde{\sigma}_{n+1}+\tilde{\sigma}_{n+3}&=0,\quad n\in\mathbb{Z}
			\end{align}
			and we see like in example \ref{example:Legendre_recurrences_quadratic_oscillator}, by the symmetry of the kernel, $\tilde{\sigma}_{2n+1}=0,\,n\geq 0$. We also have the following initial conditions in terms of the lower incomplete Gamma function:
			\begin{align*}
				\tilde{\sigma}_0&=\frac{e^{i\frac{\pi}{4}}}{\sqrt{\omega}}\left(\gamma\left(\frac{1}{2},-i\omega\right)\right),\quad \tilde{\sigma}_2=2\frac{e^{i\omega}}{i\omega}-\left(1+\frac{1}{i\omega}\right)\frac{e^{i\frac{\pi}{4}}}{\sqrt{\omega}}\left(\gamma\left(\frac{1}{2},-i\omega\right)\right).
			\end{align*}
			\begin{theorem}\label{thm:full_stability_quadratic_oscillator}
				Suppose the moments $\check{\tilde{\sigma}}_n$ are computed using \eqref{eqn:recurrence_quadratic_oscillator_chebyshev_moments1} with slightly perturbed initial conditions: $\check{\tilde{\sigma}}_0=\tilde{\sigma}_0+\epsilon_0$, $\check{\tilde{\sigma}}_2=\tilde{\sigma}_2+\epsilon_2$, for some $|\epsilon_0|,|\epsilon_2|<\epsilon$. Then, for any $n$ with $2n+1<\omega$,
				\begin{align*}
					|\check{\tilde{\sigma}}_{2n}-\tilde{\sigma}_{2n}|< \frac{8n\omega^{\frac{1}{2}}}{3\left(\omega^2-(2n+1)^2\right)^{\frac{1}{4}}}\left(2+\frac{1}{\omega}\right)\epsilon.
				\end{align*}
			\end{theorem}
			\begin{proof} The proof of this result is similar to \cite[Thm.~5.1]{Dominguez2011} and can be found in Appendix B.
			\end{proof}
			This result tells us that we can reliably compute the first $N$ moments from two initial conditions using \eqref{eqn:recurrence_quadratic_oscillator_chebyshev_moments1}, and, as long as $N<C\omega$ for some constant $C<1$, any error in initial conditions grows no faster than linearly in $N$.
			
			We now consider the recurrence \eqref{eqn:recurrence_algebraic_singularities_chebyshev_moments} for a general value of $\alpha>-1$: We can still guarantee at worst linear growth of initial perturbations, but this time our rigorous analysis applies to the slightly narrower regime $N+1<\min\{C\sqrt{\omega},\omega\}$ for some $C>0$.
			\begin{theorem}\label{thm:square_root_stability_algebraic_singularity}
				Suppose the moments $\check{\tilde{\sigma}}_n$ are computed using \eqref{eqn:recurrence_algebraic_singularities_chebyshev_moments}
				with the perturbed initial conditions $\check{\tilde{\sigma}}_0=\tilde{\sigma}+\epsilon_0$, $\check{\tilde{\sigma}}_1=\check{\tilde{\sigma}}_{-1}=\tilde{\sigma}_1+\epsilon_1$, $\check{\tilde{\sigma}}_2=\check{\tilde{\sigma}}_{-2}=\tilde{\sigma}_2+\epsilon_2$, $|\epsilon_j|<\epsilon$ for some $\epsilon>0$, and assume $\check{\tilde{\sigma}}_3=\check{\tilde{\sigma}}_{-3}$. Then, whenever $	n+1<\min\{C\sqrt{\omega},\omega\}$ for a given $C>0$, we have
				\begin{align*}
					|\check{\tilde{\sigma}}_n-\tilde{\sigma}_n|\leq \frac{(K_0+nK_1)}{2}\epsilon \left(\frac{K_2\omega^{\frac{1}{2}}}{K_2\omega^{\frac{1}{2}}-1}\exp\left(\frac{C}{K_2-\omega^{-\frac{1}{2}}}\right)+1\right)
				\end{align*}
				where the constants $K_0,K_1,K_2$ are independent of $n$ and are given by 
				\begin{align*}
					K_0= \frac{2\sqrt{\omega}}{\sqrt{\omega-C^2}},\quad K_1=\frac{\omega+2+|\alpha|}{\sqrt{\omega^2-C^2\omega}},\quad K_2=\frac{\left(\omega-C^2\right)^{\frac{1}{4}}}{\omega^{\frac{1}{4}}\sqrt{2|\alpha|+2}}.
				\end{align*}
			\end{theorem}
			\begin{proof} The proof of this result is given in Appendix C.
			\end{proof}
			Note that the above constants have simple limits as $\omega\rightarrow\infty$ which means that for $\omega$ sufficiently large we can simplify the upper bound:
			\begin{corollary} For any $\delta>0,\,C>0$, there is $\omega_0>0$ such that whenever the assumptions of Thm.~\ref{thm:square_root_stability_algebraic_singularity} are satisfied, and $\omega\geq \omega_0$, the error is bounded above by
				\begin{align*}
					|\check{\tilde{\sigma}}_n-\tilde{\sigma}_n|\leq \frac{((2+\delta)+(1+\delta)n)}{2}\epsilon \left(\exp\left(C\sqrt{2|\alpha|+2}(1+\delta)\right)+1\right).
				\end{align*}
			\end{corollary}
			\revisions{\begin{remark}The stability results in Thms.~\ref{thm:full_stability_quadratic_oscillator} \& \ref{thm:square_root_stability_algebraic_singularity} assume that the additions and multiplications used to compute the moments recursively from the \textit{homogeneous} equations \eqref{eqn:recurrence_algebraic_singularities_chebyshev_moments} \& \eqref{eqn:recurrence_quadratic_oscillator_chebyshev_moments1} are done exactly. This means the results describe the stability of the recurrences with respect to perturbations in the initial conditions, i.e. an error in the approximation of the initial moments. This analysis can be modified to additionally account for errors in floating point arithmetic that may occur in the additions and multiplications performed at each recursive step, by using a discrete variation of constants argument analogous to \eqref{eqn:discrete_variation_of_constants_quadratic_oscillator} in the proof of Thm.~\ref{thm:full_stability_quadratic_oscillator}. A similar argument was employed by \cite{Dominguez2011} to study the stability of an \textit{inhomogeneous} recurrence for moments in the Filon--Clenshaw--Curtis method.		\end{remark}}
			\subsection{Change of behaviour of homogeneous solutions and Oliver's algorithm}\label{sec:tail_stability_algebraic_singularities}
			The above results suggest that, as $n$ increases, there will be change in the behaviour of homogeneous solutions to \eqref{eqn:recurrence_algebraic_singularities_chebyshev_moments} and \eqref{eqn:recurrence_quadratic_oscillator_chebyshev_moments1} and that, for sufficiently large $n$, some of the solutions will exhibit super-algebraic growth. Understanding when exactly this transition occurs for general recurrences with non-constant coefficients is an open problem, however, based on numerical experiments, we find that the following heuristic argument provides a reasonably accurate practical indication of the location of this change of behaviour: Our Ansatz is that the change from algebraic to super-algebraic regime occurs when $n\propto\omega$. Thus we let $n=C_{n,\omega}\omega$ in \eqref{eqn:recurrence_algebraic_singularities_chebyshev_moments}, set $\lambda=\sigma_{n+1}/\sigma_{n}$ and we assume that for $-3\leq j\leq 3$:
			\begin{align*}
				\sigma_{n+j}/\sigma_{n}\sim\lambda^j \quad\text{as\ }n\rightarrow\infty.
			\end{align*}
			Plugging into \eqref{eqn:recurrence_quadratic_oscillator_chebyshev_moments1} and matching the leading order terms in $n$ yields the condition
			\begin{align*}
				\lambda^{-3}+\left(-1+2iC_{n,\omega}\right)\lambda^{-1}+\left(-1-2iC_{n,\omega}\right)\lambda+\lambda^3=0,
			\end{align*}
			which has solutions $\lambda=\pm1,\pm\sqrt{iC_{n,\omega}\pm\sqrt{1-C_{n,\omega}^2}}$. Thus these solutions for $\lambda$ are in modulus no larger than one if and only if $n/\omega=C_{n,\omega}<1$. This prediction matches our rigorous result in Thm.~\ref{thm:full_stability_quadratic_oscillator} which showed no larger than linear growth in that regime. For $n/\omega=C_{n,\omega}>1$ two of those values of $\lambda$ are in modulus greater than one, thus indicating that there may be two out of six linearly independent solutions to \eqref{eqn:recurrence_quadratic_oscillator_chebyshev_moments1} that exhibit super-algebraic growth in this regime.
			
			A similar heuristic argument can be applied to \eqref{eqn:recurrence_algebraic_singularities_chebyshev_moments}, which reduces, \revisions{after matching the leading order terms in $n$,} to the condition
			\begin{align}\label{eqn:heuristic_argument_for_algebraic_singularities}
				\revisions{\lambda^{-3}+2iC_{\omega,n}\lambda^{-2}-\lambda^{-1}-\lambda-2iC_{\omega,n}\lambda^{2}+\lambda^3=0.}
			\end{align}
			The solutions are now $\lambda=i^j, j=1,\dots 4,$ and $\lambda=i C_{n,\omega}\pm\sqrt{1-C_{n,\omega}^2}$. This means we expect algebraic behaviour in the regime $n/\omega=C_{n,\omega}<1$, which suggests that the results in Thm.~\ref{thm:square_root_stability_algebraic_singularity} might extend to larger values of $n$ than we are currently able to prove. This near-linear growth until $n\approx \omega$ is indeed observed in practice as we show in Fig.~\ref{fig:norm_growth_algebraic_singularities}. Finally, when $n/\omega=C_{n,\omega}>1$ one of the solutions for $\lambda$ is in modulus greater than one, which indicates that we might expect to have one out of six linearly independent solutions exhibiting super-algebraic growth in this regime. Of course, the moments $\sigma_n$ decay algebraically as $n\rightarrow\infty$ for any fixed $\omega$. This suggests that the tail (i.e. moments with $n>\omega$) can be computed stably using Oliver's algorithm \cite{oliver1968numerical} with five initial and one endpoint value, \revisions{the latter of which can be approximated by an asymptotic expression for $\sigma_n$ as $n\rightarrow\infty$ (as was done for a linear oscillator by \cite{Dominguez2011})}. Numerical experiments support this observation, but in the interest of brevity those are omitted from the present work. Instead we shall provide numerical evidence that supports the above argument of algebraic stability for \eqref{eqn:recurrence_algebraic_singularities_chebyshev_moments} when $n<C\omega$ for some $C<1$.
			
			\subsection{Numerical examples and comparison to previous work}\label{sec:numerical_examples_algebraic_singularities}
			One way to verify this numerically is by expressing the recurrence \eqref{eqn:recurrence_algebraic_singularities_chebyshev_moments} in the equivalent form 
			\begin{align}\label{eqn:algebraic_recurrence_as_matrix_product}
				\bm{x}_{N+1}=A_N\bm{x}_N=A_N A_{N-1}\cdots A_0\bm{x}_0=:\prod_{n=1}^NA_n\bm{x}_1, \quad\forall N\in\mathbb{Z},
			\end{align}
			where $\bm{x}_n=(x_{n+2},\dots,x_{n-3})^T$ and $A_n,\, n\geq 0,$ are $6\times 6$ matrices given by
			\begin{align}\label{eqn:matrix_form_algebraic_recurrence_as_matrix_product}
				A_n=\begin{pmatrix}
					-\frac{2(n+3+\alpha)}{i\omega}&1&\frac{4\alpha-4}{i\omega}&1&\frac{2(n-3)-\alpha}{i\omega}&-1\\
					&&\bm{I_{5}}&&&0
				\end{pmatrix},
			\end{align}
			where $\bm{I_5}$ is the $5\times5$ identity matrix. The growth of an arbitrary homogeneous solution to \eqref{eqn:recurrence_algebraic_singularities_chebyshev_moments} with given initial conditions $x_{-2},\dots,x_{3}$ is then bounded above by the norm $\left\|\prod_{n=1}^NA_n\right\|\|\bm{x}_1\|$. In Figure \ref{fig:norm_growth_algebraic_singularities} we plot this quantity for various values of $\omega$ and $\alpha$. As guaranteed by Thm.~\ref{thm:square_root_stability_algebraic_singularity}, the initial regime exhibits only linear growth, which changes to super-algebraic growth near $n\approx\omega$ as has been predicted by the heuristic argument \eqref{eqn:heuristic_argument_for_algebraic_singularities}.
			
			\begin{figure}[h!]\vspace{-0.25cm}
				\centering
				\begin{subfigure}[h]{0.49\linewidth}
					\centering
					\includegraphics[width=0.95\linewidth]{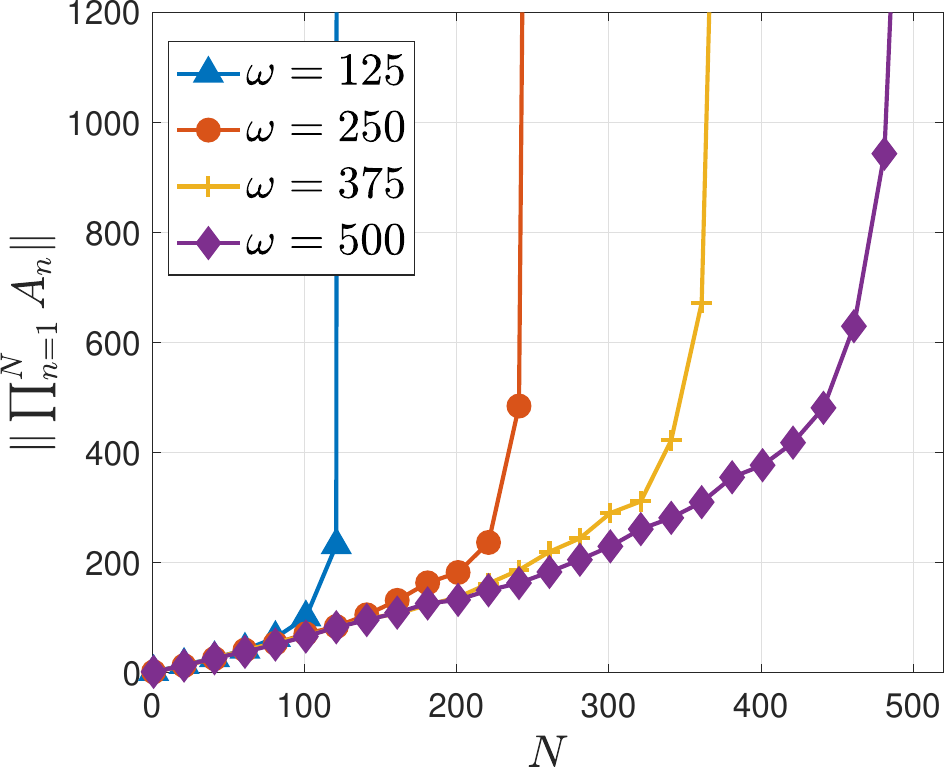}
					\caption{$\alpha=-0.2$}
					\label{fig:normgrowthreal1}
				\end{subfigure}%
				\begin{subfigure}[h]{0.49\linewidth}
					\centering
					\includegraphics[width=0.95\linewidth]{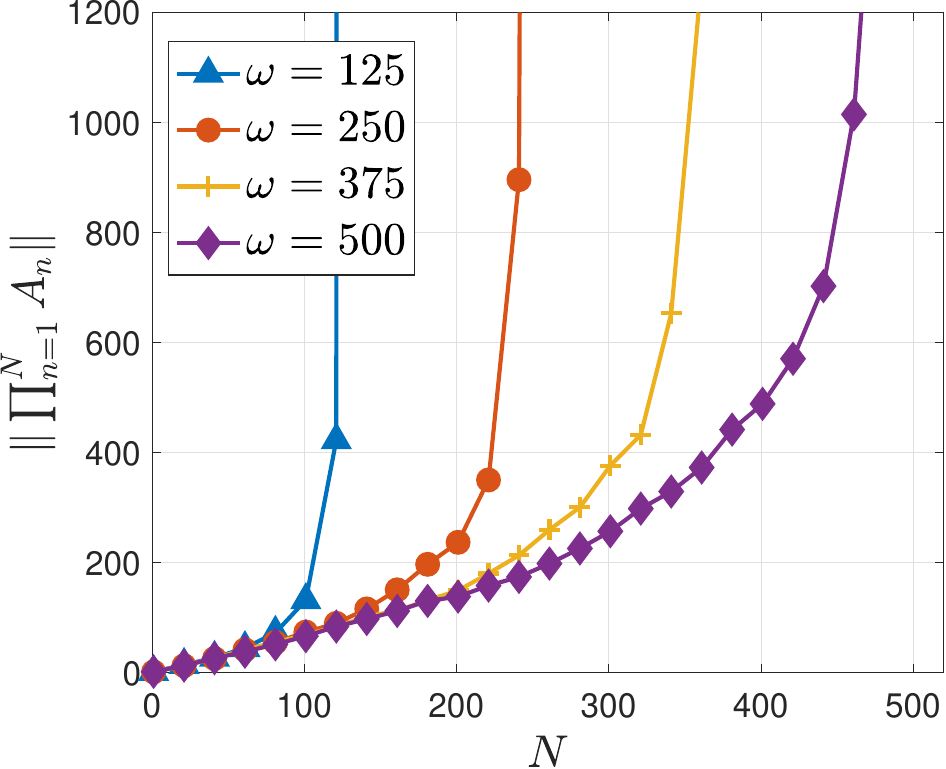}
					\caption{$\alpha=-0.7$}
					\label{fig:normgrowthreal2}
				\end{subfigure}
				\caption{The growth of solutions to \eqref{eqn:recurrence_algebraic_singularities_chebyshev_moments} as measured by $\left\|\prod_{n=1}^NA_j\right\|$. The initial linear growth transitions to super-algebraic growth nearby $n\approx \omega$ as predicted heuristically in \eqref{eqn:heuristic_argument_for_algebraic_singularities}.}
				\label{fig:norm_growth_algebraic_singularities}
			\end{figure}\newpage
			
			In our second numerical example we evaluate the practical performance of the direct Filon method with recursive moment computation for $I_\omega^{(2)}$. In this example we let $\alpha=-0.25$ and $f(x)=x/(1+x^2)+1/(1+x^4)$, i.e. we approximate the integral
			\begin{align*}
				I_\omega^{(2)}[f]=\int_{-1}^1 \mathrm{sgn}(x)|x|^{-0.25}e^{i\omega x}\left(\frac{x}{1+x^2}+\frac{1}{1+x^4}\right)dx.
			\end{align*}
			We begin by considering the absolute error of the direct Filon method
			\begin{align*}
				\mathcal{E}_{[s,\nu]}[f]&=\left|I^{(2)}_{\omega}[f]-\mathcal{Q}_\omega^{[s,\nu]}[f]\right|, \text{\ where\ }\mathcal{Q}_\omega^{[s,\nu]}[f]=I^{(2)}_\omega[q], \text{\ and\ }q\text{\ satisfies \eqref{eqn:interpolation_problem_p2}} .
			\end{align*}
			Figure \ref{fig:asymptotic_order_our_filon} shows this absolute error for the range $\omega\in[16,400]$ with fixed values of $\nu=5,s=0,1,2$. The black dash-dotted curves correspond to the asymptotic orders $\Ob(\omega^{-k-2+0.25})$ and confirm \eqref{eqn:asymptotic_error_algebraic_singularities}.
			
			In Fig.~\ref{fig:cpu_time_comparison} we compare the efficiency of the direct method (for $s=0$) with the composite Filon method described by \cite{dominguez2013filon} and with a simple graded Clenshaw--Curtis approach. Both of the latter methods define a mesh that is graded towards the singularity at $x=0$ with $x_j=\mathrm{sgn}(j)|j/M|^{r}$ for $j=-M,\dots, M,$ and on each of the intervals $[x_j,x_{j+1}], j=-M,\dots,-2,1,\dots,M,$ the integral is approximated by the classical Filon method as in \cite{Dominguez2011} for the composite Filon method, and by Clenshaw--Curtis quadrature in the graded Clenshaw--Curtis case. These `sub-methods' on each $[x_j,x_{j+1}]$ come with an additional parameter $\nu$, where $\nu+2$ is the number of quadrature points on $[x_j,x_{j+1}]$ analogously to \eqref{eqn:extended_Filon_interpolation_problem} with $s=0$. For both methods the integral on $[x_{-1},x_1]$ is approximated by zero and the contributions are summed to provide an overall approximation to $I_\omega^{(2)}[f]$.
			
			In the figure we compare the minimum CPU time each of the methods required in order to compute the integral to a fixed relative error of $10^{-7}$. According to \cite{dominguez2013filon} if we choose $r>(\nu+2)/(1+\alpha)$ the composite Filon method converges as $M\rightarrow\infty$. Thus we fix $\nu=4, r=8.1$ and proceed by increasing $M$ from $M=10$ until we reach the desired relative error with a certain choice $M_0(\omega)$. The CPU time plotted in Fig.~\ref{fig:cpu_time_comparison} is the time the method took to evaluate the integral with the fixed setting $M=M_0(\omega)$. We repeat the process for each frequency $\omega$ and proceed similarly for the graded Clenshaw--Curtis method. According to \eqref{eqn:Filon_error_as_interpolation_error_algebraic_singularities} the direct Filon--Clenshaw--Curtis method converges as $\nu\rightarrow\infty$ and so for this case we start with $\nu=3$ and proceed by increasing $\nu$ until we achieve the desired relative error with some $\nu_0(\omega)$ before plotting the CPU time it took to evaluate the method with $\nu=\nu_0(\omega)$ and repeating the process for each frequency.
			
			\begin{figure}[h!]
				\centering\vspace{-0.25cm}
				\begin{subfigure}[h]{0.49\linewidth}
					\centering
					\includegraphics[width=0.95\textwidth]{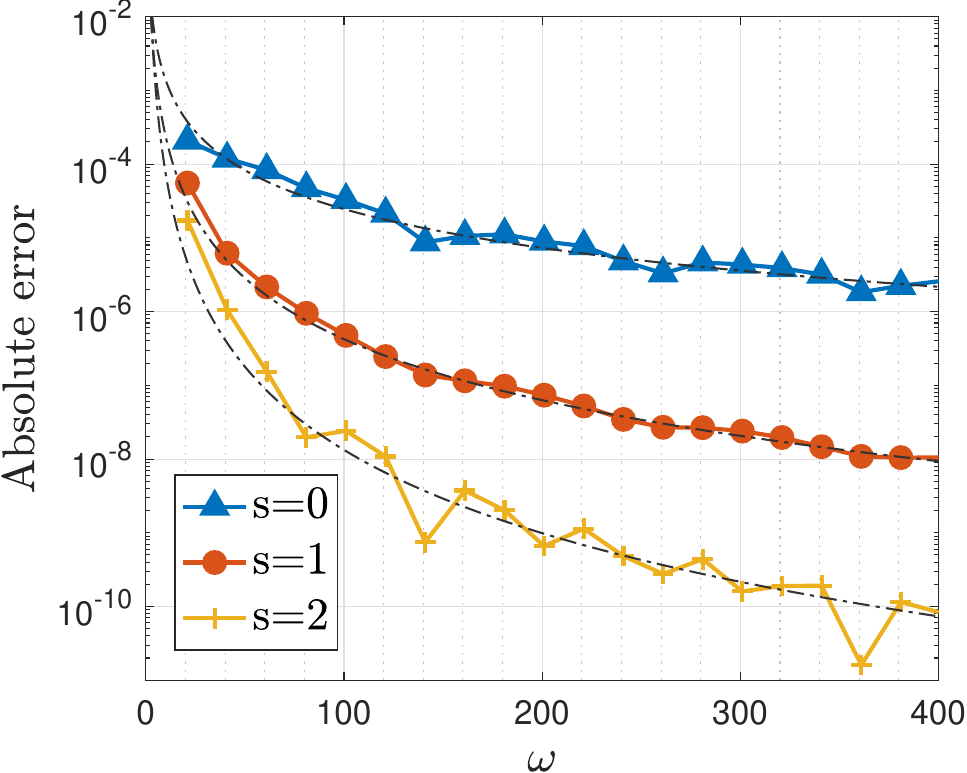}
					\caption{$\mathcal{E}_{[s,\nu]}[f]$, \textcolor{black}{for a constant number of interior\\ evaluations $\nu=5$.\hspace{-1cm}}}
					\label{fig:asymptotic_order_our_filon}
				\end{subfigure}%
				\begin{subfigure}[h]{0.49\linewidth}
					\centering
					\includegraphics[width=0.95\textwidth]{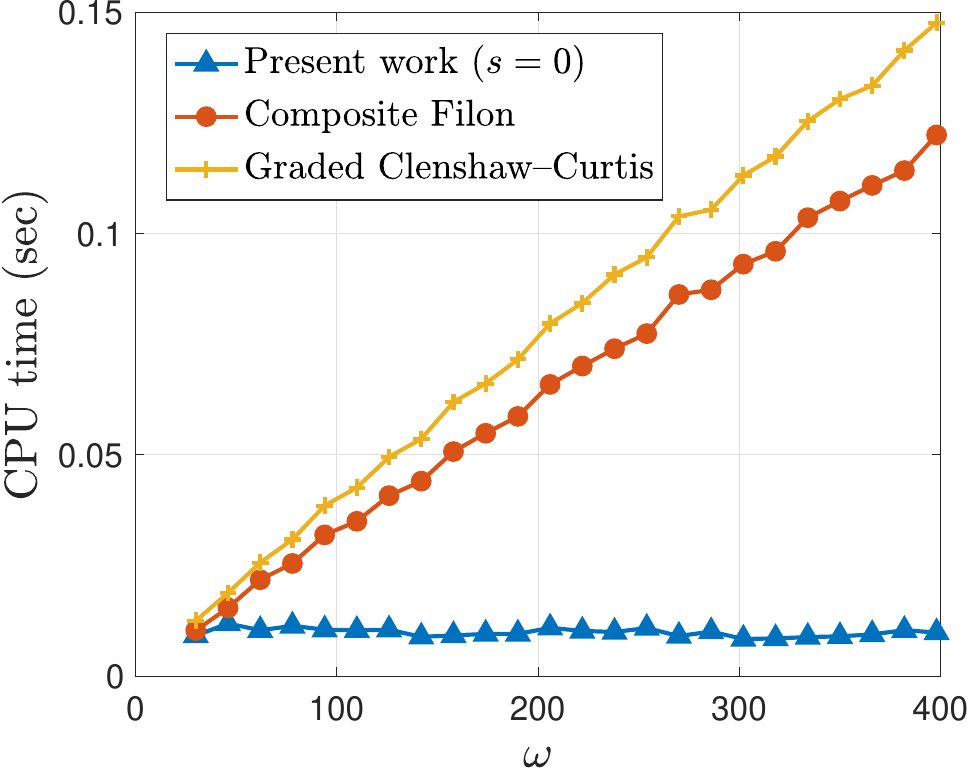}
					\caption{Min. CPU time to achieve relative error $\leq10^{-7}$.\\\,}
					\label{fig:cpu_time_comparison}
				\end{subfigure}
				\caption{Comparison of our direct Filon method with recursive moment computation, with the literature. On the left we show the absolute error for our present method, and on the right a comparison of the minimum CPU time necessary to achieve a fixed relative accuracy using our present work in comparison to the composite Filon method \cite{Dominguez2011} and a graded Clenshaw--Curtis method.}
			\end{figure}
			
			While this example is certainly no complete parametric study of the convergence properties of all three methods, and especially the absolute value of the timings depends significantly on the specific implementation and CPU used (all experiments here were performed on a single core of an Intel Core i5-8500 CPU), the point to take away is the overall clear trend in the cost as $\omega$ increases: As expected the classical graded Clenshaw--Curtis method requires the fastest, linear, increase in cost. While the composite Filon method performs better, it is, by construction as explained in \eqref{eqn:missmatch_asymptotic_behaviour_dominguez}, still required to increase the cost with frequency, since it does not fully match the asymptotic behaviour of the integral near the singularity. \revisions{The behaviour of this method could be improved if we were to change the grading as $\omega$ increases, but this has to be done in a very specific manner depending on the value of $\alpha$ and at present the available literature considers only the cases $\alpha=0,-0.5$ \cite[]{gao2018adaptive}. We refer the reader to \cite[\S4.4]{deano2017computing} for a detailed discussion on the advantages and disadvantages of adaptive Filon methods.}
			Finally, the direct application of the Filon method \revisions{with recursive moment computation} achieves the approximation, as expected, a\revisions{t} completely frequency-independent cost \revisions{and indeed computes the approximation essentially at cost $\mathcal{O}(\nu\log\nu)$ independently of $\omega$}.
			
			In all of the above numerical examples the reference solution for the true integral was computed with a graded Clenshaw--Curtis method with $M=3000,\nu=10,r=40$.
			
			\section{Application to high-frequency wave scattering}\label{sec:application_to_wave_scattering}

			As a final application of our method for recursive moment computation we consider integrals arising in hybrid numerical-asymptotic methods for high-frequency wave-scattering on a screen \revisions{in two dimensions. For an introduction of the relevant formulation of these types of wave scattering problems we refer the reader to  \cite{chandler2012numerical}, \cite{hewett2014}, \cite{gibbs2019fast}, and \cite{maierhoferthesis2021}.} As will be explained in more detail in \S\ref{sec:numerical_examples_scattering_on_screen} the integrals of relevance in this context are of the form
			\begin{align}\label{eqn:form_of_integrals_linear_hankel_kernel}
				I^{(3)}_{\omega,\beta}[f]&=2\int_{0}^{1}f\left(2x-1\right)H_0^{(1)}(\omega x)e^{i\omega\beta x}dx=\int_{-1}^{1}f(x)H_0^{(1)}\left(\omega \frac{x+1}{2}\right)e^{i\omega\beta (x+1)/2}dx,
			\end{align}
			where $\beta\in\mathbb{R},\,\beta\neq -1,$ and $H_0^{(1)}$ is the Hankel function of first kind and order zero \cite[Eq.~9.1.3]{abramowitz1965handbook}. As a first step in constructing a suitable direct Filon method we aim to understand the asymptotic properties of the integral $I^{(3)}_{\omega,\beta}$. In order to do so we recall the following property of $H_0^{(1)}$:
			\begin{lemma}[Phase extraction of $H_0^{(1)}$, see Lemma 4.6 in \cite{chandler2012numerical}]\label{lem:phase_extraction_in_H_0}
				Let $h_0(z):=\exp(-iz)H_0^{(1)}(z)$, then for each $n\geq 0$ there is a constant $C_n$ such that
				\begin{align*}
					\left|\frac{\D^n}{\D z^n}h_0(z)\right|\leq C_n\begin{cases}
						\max\left\{1+\log(1/z),z^{-n}\right\},&z\in(0,1]\\
						z^{-(n+1/2)},&z\in[1,\infty).
					\end{cases}
				\end{align*}
			\end{lemma}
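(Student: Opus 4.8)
The plan is to bound $h_0$ and each of its derivatives separately on the two intervals $(0,1]$ and $[1,\infty)$, using on each interval the representation of $H_0^{(1)}$ that is natural there, and then to observe that the two resulting bounds are mutually consistent at $z=1$.

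Near the origin I would start from $H_0^{(1)}=J_0+iY_0$ and the classical power series: $J_0$ is entire, while
\[
Y_0(z)=\frac{2}{\pi}\,J_0(z)\log z+r(z),
\]
with $r$ entire (it collects the $\gamma-\log 2$ contribution and the power series with harmonic-number coefficients). Hence $H_0^{(1)}(z)=\tfrac{2i}{\pi}J_0(z)\log z+\bigl(J_0(z)+i\,r(z)\bigr)$, and multiplying by the entire factor $e^{-iz}$ gives a splitting $h_0(z)=A(z)\log z+B(z)$ with $A,B$ analytic on a neighbourhood of $[0,1]$. For $n=0$ this at once yields $|h_0(z)|\le C_0\,(1+\log(1/z))$ on $(0,1]$. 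For $n\ge 1$ I would apply the general Leibniz rule: $\bigl(A\log z\bigr)^{(n)}$ is a finite sum of terms $A^{(k)}(z)\log z$ and $A^{(k)}(z)\,z^{-m}$ with $1\le m\le n$, whose coefficients are all bounded on $[0,1]$; since $|\log z|\le z^{-1}\le z^{-n}$ and $z^{-m}\le z^{-n}$ for $z\in(0,1]$, this gives $|h_0^{(n)}(z)|\le C_n\,z^{-n}$ there. The form of the stated bound is then explained by the elementary inequality $z^{-n}\ge 1+\log(1/z)$ on $(0,1]$ for every $n\ge 1$ (equivalently $t\ge 1+\log t$ for $t\ge 1$), so that $\max\{1+\log(1/z),\,z^{-n}\}$ captures the $n=0$ and the $n\ge1$ estimates simultaneously.

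Near infinity I would use a classical exact integral representation of the Hankel function,
\[
H_0^{(1)}(z)=\Bigl(\tfrac{2}{\pi z}\Bigr)^{1/2}\frac{e^{i(z-\pi/4)}}{\Gamma(1/2)}\int_0^{\infty} e^{-t}\,t^{-1/2}\Bigl(1+\tfrac{it}{2z}\Bigr)^{-1/2}dt,\qquad z>0,
\]
so that $h_0(z)=c\,z^{-1/2}G(z)$ with $G(z)=\int_0^{\infty} e^{-t}t^{-1/2}\bigl(1+it/(2z)\bigr)^{-1/2}dt$ and a fixed constant $c$. The crucial elementary point is that for real $z\ge 1$ and $t>0$ one has $|1+it/(2z)|\ge 1$, hence $\bigl|(1+it/(2z))^{-1/2-j}\bigr|\le 1$ for all $j\ge 0$; consequently every $z$-derivative of the integrand is dominated by $e^{-t}t^{-1/2}$ times a polynomial in $t$ times a negative power of $z$, which is integrable uniformly in $z\ge1$. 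Differentiating under the integral sign then gives $|G(z)|\le\Gamma(1/2)$ and $|G^{(m)}(z)|\le C_m\,z^{-(m+1)}$ for $z\ge1$, $m\ge1$, and applying the Leibniz rule to the product $z^{-1/2}G(z)$ produces $|h_0^{(n)}(z)|\le C_n\,z^{-(n+1/2)}$ on $[1,\infty)$.

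The one genuinely delicate step is the estimate at infinity: one needs uniform control of all derivatives on the whole ray $[1,\infty)$ rather than just the leading-order asymptotics, and this is precisely why it is cleaner to work from an exact integral representation, exploiting $|1+it/(2z)|\ge1$, than to differentiate the asymptotic series term by term and track the remainders. The analysis near the origin, by contrast, is routine bookkeeping with the Leibniz rule once the logarithmic splitting $h_0=A\log z+B$ has been isolated; alternatively one may simply follow the proof of \cite[Lemma~4.6]{chandler2012numerical}.
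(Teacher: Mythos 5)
Your argument is sound, but note that the paper itself offers no proof of this lemma: it is quoted verbatim from Lemma~4.6 of \cite{chandler2012numerical}, so there is no internal proof to compare against, and your write-up is in effect a self-contained replacement for the external citation. Checking it on its own merits: the small-argument part is correct --- the splitting $Y_0(z)=\tfrac{2}{\pi}J_0(z)\log z+r(z)$ with $r$ entire gives $h_0(z)=A(z)\log z+B(z)$ with $A,B$ analytic near $[0,1]$, the Leibniz bookkeeping yields $|h_0^{(n)}(z)|\le C_n z^{-n}$ for $n\ge1$, and your observation that $z^{-n}\ge 1+\log(1/z)$ on $(0,1]$ for $n\ge1$ (equivalently $t\ge 1+\log t$ for $t\ge1$) correctly explains why the stated $\max$ reduces to the logarithm only when $n=0$. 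The large-argument part is also correct: the Hankel integral representation you quote is valid for $z>0$ with the principal branch, $|1+it/(2z)|\ge1$ gives $\bigl|(1+it/(2z))^{-1/2-j}\bigr|\le1$, and since each $z$-differentiation of $(1+it/(2z))^{-1/2}$ produces terms of the form $u^j(1+u)^{-1/2-j}z^{-m}$ with $u=it/(2z)$ and $1\le j\le m$, one indeed gets $|G^{(m)}(z)|\le C_m z^{-(m+1)}$ for $z\ge1$, $m\ge1$ (even $z^{-m}$ would suffice for the Leibniz step), uniformly dominated so that differentiation under the integral is justified; combining with $(z^{-1/2})^{(k)}=O(z^{-1/2-k})$ gives $|h_0^{(n)}(z)|\le C_n z^{-(n+1/2)}$ on $[1,\infty)$, and the two bounds agree up to constants at $z=1$. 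In short: no gap, and a legitimate alternative to simply citing \cite{chandler2012numerical}.
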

			\ \newline
			With this control on the oscillations in $h_0(\omega x)$ we can proceed to show:
			\begin{proposition}[Filon paradigm for $I^{(3)}_{\omega,\beta}$]\label{prop:filon_paradigm_for_alpha_zero} For any $k\in\mathbb{N}$ there is a constant $C_k>0$ such that for all $\beta\in\mathbb{R},\beta\neq-1,\omega\geq 1$ and any function $\tilde{f}\in C^{k+2}[0,1]$ with $\tilde{f}^{(j)}(\pm1)=0$ for $j=0,\dots,k$:
				\begin{align*}
					\left|I^{(3)}_{\omega,\beta}[\tilde{f}]\right|\leq C_k\left( \omega^{-(k+2)}\|\tilde{f}^{(k+1)}\|_{\infty}\frac{|\beta+1|^{k+3}-1}{|\beta+1|-1}+\omega^{-(k+2)}\log\omega\|\tilde{f}^{(k+2)}\|_{\infty}|\beta+1|^{-(k+2)}\right).
				\end{align*}
			\end{proposition}
			\begin{proof}The proof is given in Appendix E.
			\end{proof}
			This means that the FCC rule $\mathcal{Q}^{[s,\nu]}_\omega[f]:=I^{(3)}_{\omega,\beta}[p]$ with $p$ satisfying \eqref{eqn:extended_Filon_interpolation_problem} has the asymptotic error
			\begin{align*}
				\left|I^{(3)}_{\omega,\beta}[f]-\mathcal{Q}^{[s,\nu]}_\omega[f]\right|=\mathcal{O}(\omega^{-(k+2)}\log\omega),\quad \omega\rightarrow\infty.
			\end{align*}
			Moreover Prop.~\ref{prop:filon_paradigm_for_alpha_zero} allows us to understand the $\nu$-dependency of the quadrature error through the quality of approximation of $f$ by the interpolating polynomial $p$. There are a number of ways to estimate $\|f^{(j)}-p^{(j)}\|_\infty$: one possibility is via the Hermite interpolation formula as was done in the Filon context for non-stationary oscillators by \cite{melenk2010}, another is to relate the error to the regularity of $f$ in periodic Sobolev norms on $[0,2\pi]$ via the change of variable $x=\cos\theta$, this approach was taken for linear oscillators by \cite{Dominguez2011}. Finally, in our opinion, a very elegant way is via optimal error bounds using the Peano kernel theorem, in particular we can use the following result due to \cite{shadrin199525}: Define the nodal polynomial for the interpolation problem \eqref{eqn:extended_Filon_interpolation_problem} as $\tilde{r}(x)=(x^2-1)^s\prod_{j=1}^\nu(x-c_j)$ then we have the following bounds (where the constants are optimal over $f\in C^{(\nu+2s+1)}([-1,1])$):
			\begin{align}\label{eqn:shadrins_estimate}
				\|f^{(j)}-p^{(j)}\|_\infty\leq\frac{\|\tilde{r}^{(j)}\|_\infty}{(\nu+2s+1)!}\|f^{(\nu+2s+1)}\|_\infty.
			\end{align}
			We can combine \eqref{eqn:shadrins_estimate} with Prop. \ref{prop:filon_paradigm_for_alpha_zero} and the trivial estimate 
			\begin{align*}
				\left|I^{(3)}_{\omega,\beta}[f]-\mathcal{Q}^{[s,\nu]}_\omega[f]\right|\leq \omega^{-1}\|H^{(1)}_0\|_{L^1([0,\omega])}\|f-p\|_\infty,
			\end{align*}
			to find:
			\begin{corollary}\label{cor:nu_explicit_error_estimates}For any $s\in \mathbb{N}$ there is $C_s>0$ such that for all $f\in C^{\infty}([-1,1]),\nu\in\mathbb{N},\beta\neq0,\omega\geq 1$:
				\begin{align*}
					\left|I^{(3)}_{\omega,\beta}[f]-\mathcal{Q}^{[s,\nu]}_\omega[f]\right|\leq \min&\left\{\omega^{-1}\|H^{(1)}_0\|_{L^1([0,\omega])}\|\tilde{r}\|_\infty,C_k \omega^{-(s+2)} \left( \|\tilde{r}^{(s+1)}\|_\infty\frac{|\beta+1|^{s+3}-1}{|\beta+1|-1}\right.\right.\\
					&\quad\quad\quad\quad\quad\quad\quad\quad\quad\quad\left.\left.+\log\omega\|\tilde{r}^{(s+2)}\|_{\infty}|\beta+1|^{-(s+2)}\right)\right\}\frac{\|f^{(\nu+2s+1)}\|_\infty}{(\nu+2s+1)!}.
				\end{align*}
			\end{corollary}
			\subsection{Recursive moment computation}\label{sec:recursive_moment_computation_wave_scattering}
			The results in Prop. \ref{prop:filon_paradigm_for_alpha_zero} and Cor. \ref{cor:nu_explicit_error_estimates} guarantee convergence of the direct Filon method $\mathcal{Q}^{[\nu,s]}_\omega[f]=I^{(3)}_{\omega,\beta}[p]$, where $p$ satisfies \eqref{eqn:extended_Filon_interpolation_problem}. \revisions{Thus it} remains to compute the corresponding quadrature moments $\sigma_n:=\sqrt{s}_n I^{(3)}_{\omega,\beta}[T_n]$. The oscillatory kernel of $I^{(3)}_{\omega,\beta}$ with respect to the Chebyshev weight is
			\begin{align*}
				h_\omega(x)=\sqrt{1-x^2}H_0^{(1)}\left(\omega \frac{x+1}{2}\right)e^{i\omega\beta (x+1)/2}.
			\end{align*}
			The Hankel function satisfies Bessel's equation $(x^2(\D/\D x)^2+x\D/\D x +x^2)H^{\revisions{(1)}}_{\revisions{0}}(x)=0$ \cite[Eq.~9.1.1]{abramowitz1965handbook}. Thus a change of variable and multiplication by $(1-x)$ (to ensure the equation involves a combination of operators from Lemma~\ref{lem:banded_operators_Chebyshev_polynomials}) shows $\mathcal{L}_\omega h_\omega=0$ for
			\begin{align*}
				\mathcal{L}_\omega&=\left((1-x^2)\frac{d}{dx}\right)^2+i  \left(\beta  \omega \left(x^2-1\right)-i (3
				x+1)\right)\left(\left(1-x^2\right) \frac{d}{dx}\right)\\
				&\quad\quad\quad-\frac{1}{4} \left((\beta ^2-1) \omega^2 \left(x^2-1\right)^2-2 i \beta  \omega
				(x-1) (x+1)^2-4 \left(x^2+x+1\right)\right).
			\end{align*}
			By Lem\revisions{ma}~\ref{lem:banded_operators_Chebyshev_polynomials} the action of $\mathcal{L}_\omega$ on the basis $\phi_n=\sqrt{s_n}\,T_n$ has a banded matrix representation. Although $h_\omega\not\in H^{2}([-1,1],(1-x^2)^{-1/2})$ we can use the same integration by parts argument as in \eqref{eqn:general_integration_by_parts_to_take_adjoint} to ensure \eqref{eqn:moving_adjoint_to_other_side} holds and we find after a few steps of algebra the following recursive relation satisfied by the moments valid for all $n\in\mathbb{Z}$, where we again use the notation $\tilde{\sigma}_n=\sigma_n/\sqrt{s_n}$ and $\tilde{\sigma}_{-n}=\tilde{\sigma}_n$:
			\begin{align}\label{eqn:recurrence_for_sigma_1}
				\begin{split}
					(1-\beta^2)\tilde{\sigma}_{n-4}&-\frac{4i\beta(2n-7)}{\omega}\tilde{\sigma}_{n-3}+\frac{16(n-3)^2+8i\beta\omega-4(1-\beta^2)\omega^2}{\omega^2}\tilde{\sigma}_{n-2}\\
					&+\frac{-32(n-2)+4i\beta(6n-7)\omega}{\omega^2}\tilde{\sigma}_{n-1}+\frac{32(3-n^2)-16i\beta\omega+6(1-\beta^2)\omega^2}{\omega^2}\tilde{\sigma}_n\\
					&+\frac{32(n+2)-4i\beta(6n+7)\omega}{\omega^2}\tilde{\sigma}_{n+1}+\frac{16(n+3)^2+8i\beta\omega-4(1-\beta^2)\omega^2}{\omega^2}\tilde{\sigma}_{n+2}\\
					&+\frac{4i\beta(2n+7)}{\omega}\tilde{\sigma}_{n+3}+(1-\beta^2)\tilde{\sigma}_{n+4}=0
				\end{split}
			\end{align}
			\subsection{Initial conditions}\label{sec:initial_conditions}
			Since the recurrence \eqref{eqn:recurrence_for_sigma_1} holds also when $n=0,1,2,3$ already four initial conditions $\tilde{\sigma}_0,\dots,\tilde{\sigma}_{4}$ are sufficient to start the moment computation (and just three initial conditions suffice when $\beta=1$). We begin by proving an \revisions{expression that} allows for efficient and accurate computation of $\tilde{\sigma}_0$:
			\begin{lemma}\label{lem:expression_for_initial_moments_linear_oscillator} For $\omega>0, \beta\neq -1$:
				\begin{align}\label{eqn:closed_form_expression_rho_0}
					\tilde{\sigma}_0&=-\frac{2i}{\pi}\omega^{-1}e^{i(\beta+1)\omega}\int_{0}^{\infty} \frac{1}{\sqrt{t}}\frac{e^{-t\omega}}{\sqrt{2i-t}(1+\beta+it)}dt+\frac{2}{\pi}\omega^{-1}\begin{cases}
						\frac{2}{\sqrt{1-\beta^2}}\revisions{\arctan\sqrt{\frac{1-\beta}{1+\beta}}},&\quad |\beta|<1,\\
						\frac{\revisions{2}}{\sqrt{\beta^2-1}}\revisions{\mathrm{arctanh}\sqrt{\frac{\beta-1}{1+\beta}}},&\quad |\beta|>1,\\
						1,&\quad \beta=1.
					\end{cases}
				\end{align}
			\end{lemma}
			\begin{proof}
				We have the following integral expression \cite[Eq.~10.9.10]{NIST:DLMF}
				\begin{align*}
					H_0^{(1)}(z)=-\frac{2i}{\pi}\int_{0}^{\infty}e^{iz\cosh t}dt, \quad \forall\, 0<\arg z<\pi.
				\end{align*}
				Thus we have:
				\begin{align*}
					\omega\sigma_0&=\lim_{\epsilon\rightarrow 0^+}\int_{0}^{\omega}H^{(1)}_0( x+i\epsilon)e^{i \beta x}dx=-\frac{2i}{\pi}\lim_{\epsilon\rightarrow 0^+}\int_0^\omega\int_{0}^{\infty}e^{(ix-\epsilon)\cosh t}dte^{i\beta x}dx\\
					&=-\frac{2}{\pi}\lim_{\epsilon\rightarrow 0^+}\int_{0}^{\infty}e^{-\epsilon\cosh t} \frac{e^{i\omega(\cosh t+\beta)}-1}{\cosh t+\beta}dt=-\frac{2}{\pi}\int_{0}^{\infty}\frac{e^{i\omega(\cosh t+\beta)}-1}{\cosh t+\beta}dt\\
					&=-\frac{2}{\pi}\int_{1}^{\infty}\frac{e^{i\omega(y+\beta)}}{y+\beta}\frac{dy}{\sqrt{y^2-1}}+\frac{2}{\pi}\int_{0}^{\infty}\frac{1}{\cosh t+\beta}dt,
				\end{align*}
				where in the final line we used the change of variable $y=\cosh t$. The second integral can be evaluated explicitly \cite[Eq.~3.513.2]{gradshteyn2014table} when $\beta\neq \pm1$ and by taking an appropriate limit as $\beta\rightarrow 1^+$, justified by the dominated convergence theorem, we can also deduce its value for $\beta=1$. For the first integral the decay of the integrand is sufficient so that we can deform the contour of integration to $y=1+it$ (noting that the square root singularity at $y=1$ can be dealt with by excluding a small neighbourhood of the origin during the change of contour). Combining those yields precisely the expression \eqref{eqn:closed_form_expression_rho_0}.
			\end{proof}\ \newline
			\newline If we set $g_\beta(t)=(\sqrt{2i-t}(1+\beta+it))^{-1}$ we can write the remaining integral in \eqref{eqn:closed_form_expression_rho_0} in the form
			\begin{align*}
				-\frac{2i}{\pi}\omega^{-1}\int_{0}^{\infty} \frac{1}{\sqrt{t}}\frac{e^{i(\beta+1)\omega}e^{-t\omega}}{\sqrt{2i-t}(1+\beta+it)}dt&=-\frac{2i}{\pi}\omega^{-\frac{3}{2}}e^{i(\beta+1)\omega}\int_{0}^{\infty}g_\beta\left(\frac{t}{\omega}\right) \frac{1}{\sqrt{t}}e^{-t}dt.
			\end{align*}
			Differentiation with respect to $\beta$ (justified by the dominated convergence theorem) allows us to find similar expressions for the remaining initial conditions $\tilde{\sigma}_j, j=1,2,3$. For completeness these are provided in Appendix D. Thus we found expressions for the initial moments in terms of simple functions and integrals of the form
			\begin{align*}
				\int_{0}^{\infty}f\left(\frac{t}{\omega}\right) \frac{1}{\sqrt{t}}e^{-t}dt=\sqrt{\omega}\int_{-\infty}^{\infty}f\left(z^2\right)e^{-\omega z^2}dz,
			\end{align*}
			which have an exponentially decaying integrand (with faster exponential decay in $z$ as $\omega$ increases) and can be evaluated efficiently using either adaptive quadrature or Gauss--Hermite quadrature. We shall not provide a detailed study of the evaluation, but note that we can write
			\begin{align*}
				g_\beta\left(\frac{t^2}{\omega}\right)=\omega^{\frac{3}{2}}\frac{1}{\sqrt{2i\omega-t^2}}\frac{1}{(1+\beta)\omega+it^2}
			\end{align*}
			which means the integrand becomes nearly singular when $\omega,(1+\beta)\omega\ll 1$. Thus for practical purposes we restrict the use of \eqref{eqn:closed_form_expression_rho_0} to the case when $\omega,(1+\beta)\omega\geq1$. Of course, the kernel of $I^{(3)}_{\omega,\beta}[f]$ takes the form $h_0(\omega x)\exp\left(i\omega(1+\beta) x\right)$ (cf. Lemma~\ref{lem:phase_extraction_in_H_0}) which means it is only highly oscillatory when $\omega(1+\beta)\gg 1$, and so in the case when \eqref{eqn:closed_form_expression_rho_0} is near singular we do not need to use Filon methods for the approximation of $I^{(3)}_{\omega,\beta}[f]$ to begin with. The same holds true for the analogous expressions for $\tilde{\sigma}_{j},j=1,2,3,$ as given in Appendix D.
			
			\subsection{Behaviour of homogeneous solutions and initial stability}\label{sec:stability_linear_hankel}
			Note that we can write the recurrence \eqref{eqn:recurrence_for_sigma_1} in the form
			\begin{align}\begin{split}\label{eqn:perturbation_form_recurrence_for_sigma_1_beta_neq1}
					(1-\beta^2)\left(\tilde{\sigma}_{n-4}-4\tilde{\sigma}_{n-2}+\right.&\left.6\tilde{\sigma}_n-4\tilde{\sigma}_{n+2}+\tilde{\sigma}_{n+4}\right)=\frac{n^2}{\omega^2}(-16)(\tilde{\sigma}_{n-2}-2\tilde{\sigma}_n+\tilde{\sigma}_{n+2})\\
					&+\frac{n}{\omega}8i\beta\left(\tilde{\sigma}_{n-3}-3\tilde{\sigma}_{n-1}+3\tilde{\sigma}_{n+1}-\tilde{\sigma}_{n+3}\right)\\
					&+\frac{n}{\omega^2}32\left(-3\tilde{\sigma}_{n-2}+\tilde{\sigma}_{n-1}-\tilde{\sigma}_{n+1}-3\tilde{\sigma}_{n+2}\right)\\
					&+\frac{1}{\omega}4i\beta\left(-7\tilde{\sigma}_{n-3}-2\tilde{\sigma}_{n-2}+7\tilde{\sigma}_{n-1}+4\tilde{\sigma}_{n}+7\tilde{\sigma}_{n+1}-2\tilde{\sigma}_{n+2}-7\tilde{\sigma}_{n+3}\right)\\
					&+\frac{1}{\omega^2}(-32)\left(3\tilde{\sigma}_{n-2}+2\tilde{\sigma}_{n-1}+2\tilde{\sigma}_{n}+2\tilde{\sigma}_{n+1}+3\tilde{\sigma}_{n+2}\right).
				\end{split}
			\end{align}
			A similar argument to the proof of Thm.~\ref{thm:square_root_stability_algebraic_singularity} shows that, whenever $\beta\neq1$, if $\omega$ is sufficiently large compared to $n$, the solutions to \eqref{eqn:perturbation_form_recurrence_for_sigma_1_beta_neq1} grow no faster than algebraically with $n$. For the case $\beta=1$ the recurrence reduces to seven terms and takes the form
			\begin{align}
				\begin{split}\label{eqn:perturbation_form_recurrence_for_sigma_1_beta_1}
					&(-2n+7)\tilde{\sigma}_{n-3}+2\tilde{\sigma}_{n-2}+\left(6 n-7\right)\tilde{\sigma}_{n-1}+\left(-4\right)\tilde{\sigma}_{n}+\left(-6 n-7\right)\tilde{\sigma}_{n+1}+2\tilde{\sigma}_{n+2}+(2n+7)\tilde{\sigma}_{n+3}\\
					&\hspace{3cm}=-\frac{-12n+28+4(n-2)(n-1)}{i\omega}\tilde{\sigma}_{n-2}-\frac{-8n+16}{i\omega}\tilde{\sigma}_{n-1}-\frac{-8n^2+24}{i\omega}\tilde{\sigma}_{n}\\
					&\hspace{3cm}\quad-\frac{8n+16}{i\omega}\tilde{\sigma}_{n+1}-\frac{12n+28+4(n+1)(n+2)}{i\omega}\tilde{\sigma}_{n+2}.
				\end{split}
			\end{align}
			Here we can understand the behaviour of the recurrence operator on the left hand side by substituting $\revisions{\gamma}_n=(2n+3)\tilde{\sigma}_{n+2}+\tilde{\sigma}_n-(2n+1)\tilde{\sigma}_{n-2}$, which yields
			\begin{align*}
				&(-2n+7)\tilde{\sigma}_{n-3}+2\tilde{\sigma}_{n-2}+\left(6 n-7\right)\tilde{\sigma}_{n-1}+\left(-4\right)\tilde{\sigma}_{n}+\left(-6 n-7\right)\tilde{\sigma}_{n+1}+2\tilde{\sigma}_{n+2}+(2n+7)\tilde{\sigma}_{n+3}\\
				&\hspace{11cm}=\gamma_{n-2}-2\gamma_n+\gamma_{n+2},
			\end{align*}
			and shows, by a simple discrete variation of constants argument, that if $\omega$ is sufficiently large compared to $n$ then the solutions to \eqref{eqn:perturbation_form_recurrence_for_sigma_1_beta_1} grow at most linearly in $n$. In both cases $\beta\neq1$ and $\beta=1$ the solutions to \eqref{eqn:recurrence_for_sigma_1} thus have algebraic behaviour in the initial regime. We find from numerical experiments that this behaviour changes as $n$ increases for fixed $\omega$, and that some of the solutions exhibit super-algebraic growth for $n$ sufficiently large, thus leading to instability in \eqref{eqn:recurrence_for_sigma_1}. To understand where this transition occurs we follow the procedure described in \S\ref{sec:tail_stability_algebraic_singularities}. We suspect the change of behaviour occurs when $n\propto \omega$. Thus we let $n=C_{n,\omega}\omega$ and make the Ansatz $\tilde{\sigma}_{n+ j}/\tilde{\sigma}_n=\lambda^j,\, -4\leq j\leq 4$, which when plugged into \eqref{eqn:recurrence_for_sigma_1} results in the following condition at leading order in $\omega$:
			\begin{align}\begin{split}\label{eqn:lambda_condition_linear_hankel}
					(1 - \beta^2)\lambda^{-4}&-8i\beta C_{n,\omega}\lambda^{-3}+\left(16C_{n,\omega}^2-4(1-\beta^2)\right)\lambda^{-2}+\left(24i\beta C_{n,\omega}\right)\lambda^{-1}\\
					&+\left(-32C_{n,\omega}^2+6(1-\beta^2)\right)+\left(-24 i \beta C_{n,\omega}\right)\lambda+\left(16C_{n,\omega}^2-4(1-\beta^2)\right)\lambda^2\\
					&\hspace{6cm}+8i\beta C_{n,\omega}\lambda^3+(1 - \beta^2)\lambda^4=0.
				\end{split}
			\end{align}
			When $\beta\neq1$ the condition has eight solutions for $\lambda$:
			\begin{align*}
				\lambda=\pm1,\pm1,\frac{-2iC_{n,\omega}\pm\sqrt{(1-\beta)^2-4C_{n,\omega}^2}}{1-\beta},\frac{2iC_{n,\omega}\pm\sqrt{(1+\beta)^2-4C_{n,\omega}^2}}{(\beta+1)}.
			\end{align*}
			All of those values are in modulus equal to $1$ if and only if $n/\omega=C_{n,\omega}\leq\min\{|1+\beta|/2,|1-\beta|/2\}$, so we expect algebraic behaviour in this regime and the onset of super-algebraic growth to occur when $n\approx \omega \min\{|1+\beta|/2,|1-\beta|/2\}$. This behaviour is confirmed in Fig. \ref{fig:normgrowthbeta2}. When $n/\omega=C_{n,\omega}\geq\min\{|1+\beta|/2,|1-\beta|/2\}$ at most two of the values for $\lambda$ have modulus greater than 1 and we thus expect Oliver's algorithm \cite{oliver1968numerical} with six initial and two endpoint values to provide a stable way of computing the remainder of the moments. \revisions{These two endpoint values (i.e. values of $\tilde{\sigma}_{N},\tilde{\sigma}_{N+1}$ for some $N\gg \omega$) can be approximated using the asymptotic expansion of $\tilde{\sigma}_N$ as $N\rightarrow\infty$ for fixed $\omega$ which is obtained from the method of stationary phase \cite{bender2013advanced} (cf. also \cite[\S4]{Dominguez2011}).} As we explained in \S\ref{sec:stability_analysis_of_the_recurrences}, in practical applications of Filon methods it is less important to compute moments when $n\gtrsim\omega$ since at that point classical quadrature \revisions{is no more expensive than} the Filon method. In the interest of brevity we therefore omit a discussion of the application of Oliver's algorithm, but note that we performed initial numerical experiments which suggest that this provides indeed a satisfactory way for computing the remaining quadrature moments.
			
			When $\beta=1$ \eqref{eqn:lambda_condition_linear_hankel} has six solutions, $\lambda=\pm1,\pm1, iC_{n,\omega}\pm\sqrt{1-C_{n,\omega}^2}$. These solutions are in modulus equal to $1$ whenever $n/\omega=C_{n,\omega}\leq1$ which suggests the onset of super-algebraic growth lies around $n\approx \omega$. This is confirmed in Fig. \ref{fig:normgrowthbeta1}. Moreover, when $n/\omega>1$ one of the solutions for $\lambda$ is in modulus greater than 1, which indicates that Oliver's algorithm with five initial and one endpoint value can be used to compute the remaining moments in a stable way. \revisions{Again the endpoint value (i.e. $\tilde{\sigma}_{N}$ for some $N\gg \omega$) can be approximated using the method of stationary phase for $N\rightarrow\infty$.}
			\subsection{Numerical evidence of stable forward propagation}
			Similar to \eqref{eqn:algebraic_recurrence_as_matrix_product} we can write the recurrence \eqref{eqn:recurrence_for_sigma_1} in the form
			\begin{align*}
				\bm{x}_{N+1}^{(j)}=\prod_{n=1}^NB^{(j)}_n\bm{x}^{(j)}_1, \quad\forall N\in\mathbb{Z},
			\end{align*}
			where $j=1$ corresponds to the case $\beta=1$ and $j=2$ covers the case $\beta\neq1$. Here $\bm{x}^{(1)}_n=(x_{n+2},\dots,x_{n-3})^T$, $\bm{x}^{(2)}_n=(x_{n+3},\dots,x_{n-4})^T$, and $B^{(j)}_n,\, n\geq 0,j=1,2,$ are $6\times 6$, and $8\times8$ matrices respectively whose entries are, analogously to \eqref{eqn:matrix_form_algebraic_recurrence_as_matrix_product}, given by the coefficients of the recurrence \eqref{eqn:recurrence_for_sigma_1} in the top row and the bottom left $5\times5$ and $7\times7$ entries are given by the identity matrix of respective size $\bm{I_5},\bm{I_7}$. This means that the matrices are of the shape
			\begin{align*}
				B^{(1)}_n&=\begin{pmatrix}
					\star&\star\\
					\bm{I_{5}}&0
				\end{pmatrix},\quad
				B^{(2)}_n=\begin{pmatrix}
					\star&\star\\
					\bm{I_{7}}&0
				\end{pmatrix},
			\end{align*}
			where $\star$ is a placeholder for the non-zero entries given by the coefficients of the recurrence \eqref{eqn:recurrence_for_sigma_1} which are not repeated in the interest of brevity. Similar to \S\ref{sec:numerical_examples_algebraic_singularities} we have the upper bound $\|\bm{x}_N^{(j)}\|\leq \left\|\prod_{n=1}^NB^{(j)}_n\right\|\|\bm{x}_1^{(j)}\|$, thus we can look at the norm of the matrix product to find an upper bound on the growth of solutions to the recurrence \eqref{eqn:recurrence_for_sigma_1}. In Fig.~\ref{fig:norm_growth_linear_hankel} we plot these norms and we see initial algebraic growth which transitions to super-algebraic roughly at the points predicted in \S\ref{sec:stability_linear_hankel}: when $n\approx\omega$ in Fig. \ref{fig:normgrowthbeta1} and when $n\approx 0.4\omega$ in Fig. \ref{fig:normgrowthbeta2}.
			\begin{figure}[h!]
				\centering
				\begin{subfigure}[h]{0.49\linewidth}
					\centering
					\includegraphics[width=0.95\linewidth]{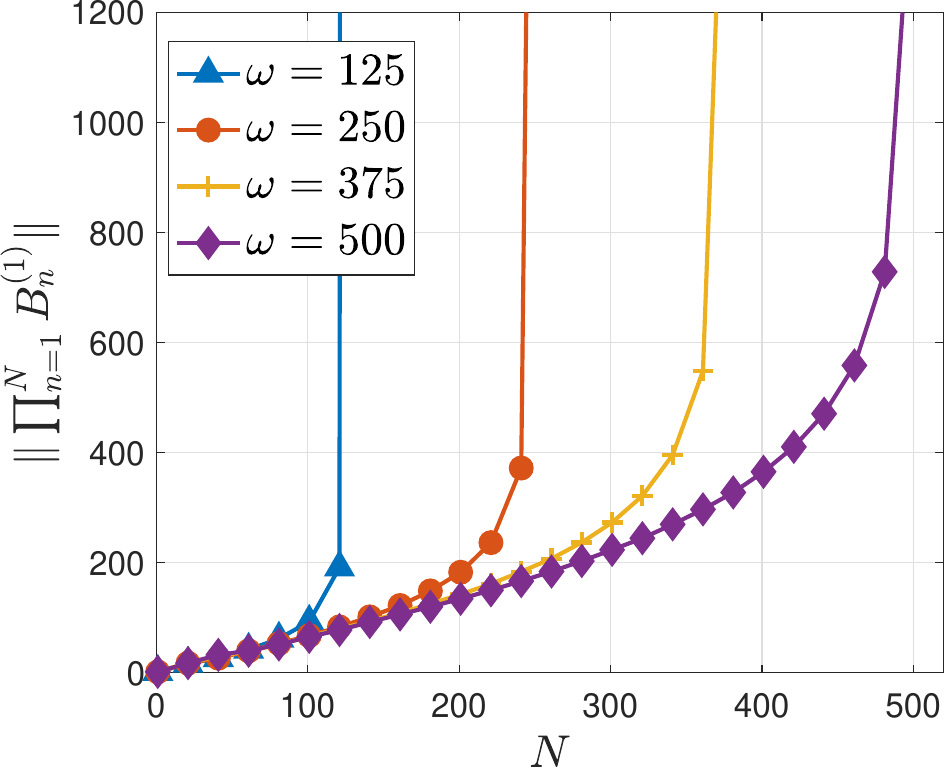}
					\caption{$\beta=1$}
					\label{fig:normgrowthbeta1}
				\end{subfigure}%
				\begin{subfigure}[h]{0.49\linewidth}
					\centering
					\includegraphics[width=0.97\linewidth]{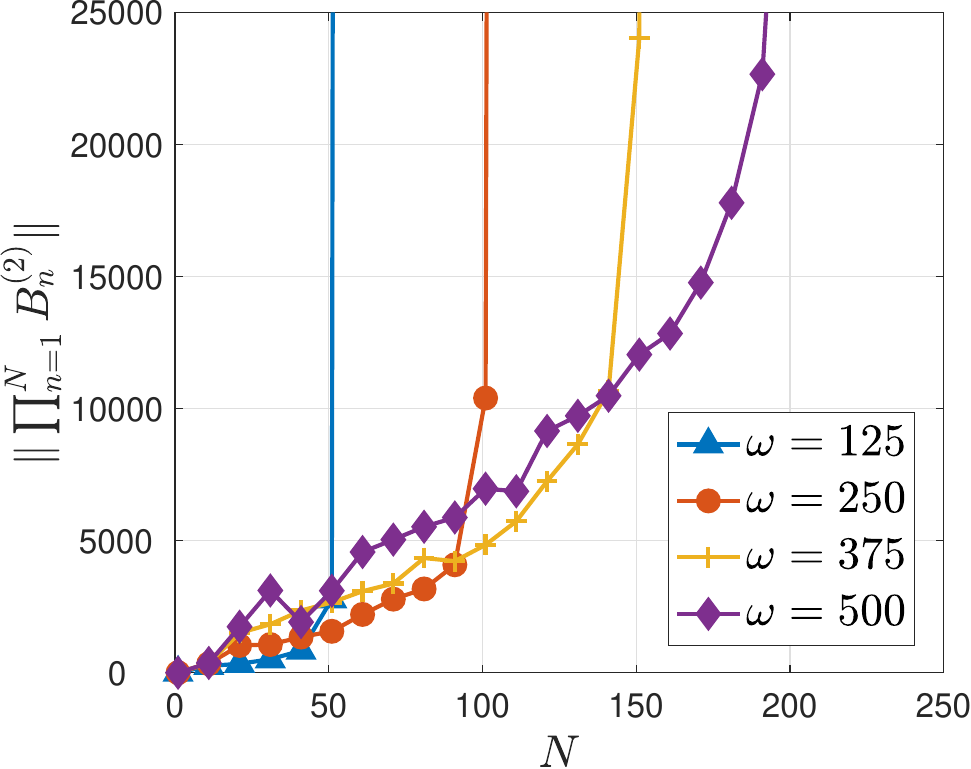}
					\caption{$\beta=0.2$}
					\label{fig:normgrowthbeta2}
				\end{subfigure}
				\caption{The growth of solutions to \eqref{eqn:recurrence_algebraic_singularities_chebyshev_moments} as measured by $\left\|\prod_{n=1}^NB^{(j)}\right\|,j=1,2$. The initial algebraic transitions to super-algebraic growth roughly at the points predicted heuristically in \S\ref{sec:stability_linear_hankel}.}
				\label{fig:norm_growth_linear_hankel}
			\end{figure}

			\subsection{Wave scattering on a screen}
			\label{sec:numerical_examples_scattering_on_screen}
			Integrals of the form \eqref{eqn:form_of_integrals_linear_hankel_kernel} appear in hybrid numerical-asymptotic collocation methods for high-frequency wave scattering on screens in two dimensions (see for instance \cite{hewett2014} and \cite{parolin2015}). Recently, \cite{gibbs2019fast} constructed a very efficient numerical steepest descent (NSD) method that can be used to assemble the matrix and right hand side in the corresponding collocation system at frequency-independent cost. In this example we demonstrate that our direct Filon method with recursive moment computation can be applied to achieve the same goal. As a conceptual difference we highlight that our method relies on evaluations of the integrand strictly in the domain of integration (contrary to NSD where a complex extension of the integrand is evaluated along steepest descent paths). This can be of advantage when the functional form of the incident field is unknown (for instance in a geometric theory of diffraction approximation to multiple scattering) or when a complex extension is not readily available or difficult to evaluate due to the presence of branch cuts in the complex plane. The latter is the case in the example that we consider, although we note that this can be overcome by choosing the steepest descent path in NSD carefully. Let us consider a simple example exhibiting the main features of the direct Filon method when applied in this setting: the scattering of a two-dimensional highly oscillatory Gaussian beam by a finite plate $\Gamma$ extending from $(-1,0)$ to $(1,0)$ in $\mathbb{R}^2$. We follow \cite[Eq.~(55)]{Kravtsov1967} and \cite[Eq.~(17)]{Keller71} and assume an incident field of the form
			\begin{align*}
				\psi_i(x,y)=\left(1+\frac{i\tilde{y}}{\omega a^2}\right)^{-\frac{1}{2}}\exp\left(i\omega \tilde{y}-\frac{\tilde{x}^2}{2a^2}\left(1+\frac{i\tilde{y}}{\omega a^2}\right)^{-\frac{1}{2}}\right)
			\end{align*}
			where $\tilde{x}=x\sin\theta-y\cos\theta,\tilde{y}=x\cos\theta+y\sin\theta$. This describes a Gaussian beam focussed at $(x,y)=(0,0)$, which propagates in the direction $(\cos\theta,\sin\theta)$ at frequency $\omega$ and has width $a$ in the plane $\{\tilde{y}=x\cos\theta+y\sin\theta=0\}$. The scattering problem on a perfectly conducting plate (i.e. with Dirichlet boundary conditions) can be written in the form \cite[Eq.~(7)]{gibbs2019fast}
			\begin{align*}
				\psi_i(s_x)=\mathcal{S}\left(\partial_n\psi\right)(s_x):=\frac{i}{4}\int_{-1}^{1} H_0^{(1)}\left(\omega|s_x-s_y|\right)\partial_n\psi(s_y)ds_y,\quad s_x\in[-1,1],
			\end{align*}
			where $\psi$ is the unknown scattered field and $s_x,s_y$ are coordinates in arclength along the plate. We follow the hybrid Ansatz described by \cite{hewett2014} where the unknown $\partial_n\psi$ is expanded in the form
			\begin{align*}
				\partial_n\psi(s_y)=V_0(s_y;\omega)+\sum_{l=1}^L V_l^+(s_y)e^{i\omega s_y}+V_l^-(s_y)e^{-i\omega s_y}
			\end{align*}
			where $V_0(s_y;\omega)=2\partial\psi_i(s_y)/\partial n$ is the geometrical optics approximation and $V_l^\pm$ are piecewise polynomials of low degree defined on a mesh graded towards the endpoints of the plate. Thus the collocation system for the free parameters in $V_l^\pm$ takes the form
			\begin{align*}
				\sum_{l=1}^N \mathcal{S}\left(V_l^+(\cdot)\,e^{i\omega\, \cdot\,}+V_l^-(\cdot)\,e^{-i\omega \,\cdot\,}\right)(s_m)=\psi_i(s_m)-2\mathcal{S}\left(\partial_n \psi_i\right)(s_m),
			\end{align*}
			for some collocation points $s_m\in [-1,1], m=1,\dots, M$. In the present example we shall focus on the evaluation of the geometrical optics contribution, but we note that the integrals over the basis terms $V_l^\pm(s_y)\exp\revisions{(}\pm i\omega s_y\revisions{)}$ can also be approximated efficiently using expressions of the form \eqref{eqn:closed_form_expression_rho_0}. On the blade the incident Gaussian beam takes the form
			\begin{align*}
				\partial_n \psi_i(s_y,0)&=\left[\frac{-8 i a^2 \omega  s_y \cos\theta +3 s_y^2 \cos(2\theta)+5 s_y^2 }{8a^4\omega^2}\left(1+\frac{i s_y \cos\theta}{a^2 \omega }\right)^{-2}+\frac{-1}{2a^2\omega^2}\left(1+\frac{i s_y \cos\theta}{a^2 \omega }\right)^{-\frac{3}{2}}\right.\\
				&\quad\quad\quad\quad\left.+\left(1+\frac{i s_y \cos\theta}{a^2 \omega }\right)^{-\frac{1}{2}}\right] i \omega  \sin\theta\exp\left(i\omega s_y\cos\theta-\frac{s_y^2\sin^2\theta}{2a^2}\left(1+\frac{is_y\cos\theta}{\omega a^2}\right)^{-\frac{1}{2}}\right)\\
				&=:A(s_y;\omega)\exp\left(i\omega s_y\cos\theta\right)
			\end{align*}
			where we have extracted the amplitude of $\partial_n\psi_i$ in $A$. Note the only $\omega$ dependency of $A$ is via a constant multiplication out front and via the function $f(x)=(1+ix\cos\theta/\revisions{a^2})^{-1/2}$ in the form $f(x/\omega)$. Since
			\begin{align*}
				\left|\frac{d^j}{dx^j}f(x)\right|=\frac{(2j-1)!!}{2^j}|\cos\theta|^ja^{-j}\left|1+\frac{ix\cos\theta}{\omega a^2}\right|^{-\frac{1}{2}-l}\leq C_j x^{-\frac{1}{2}-j}, \quad \forall\, j\geq 0,
			\end{align*}
			where $(2j-1)!!=(2j-1)(2j-3)\cdots1$, the tangential derivatives of $A(s_y;\omega)$ do not grow in $\omega$, i.e. this is a smooth non-oscillatory function that can be well approximated by polynomials on $[-1,1]$ uniformly in $\omega\geq 1$ in the sense of \eqref{eqn:shadrins_estimate}. The geometrical optics approximation thus requires us to compute the following terms for all collocation points $s_m\in [-1,1]$:
			\begin{align}\nonumber
				2\int_{-1}^{1}H^{(1)}_0(\omega |s-s_m|)&\partial_n \psi_i(s,0)ds=\\\nonumber
				&2\int_{-1}^{s_0}H^{(1)}_0(\omega |s-s_m|)A(s)e^{i\omega s\cos\theta}ds+2\int_{s_0}^{1}H^{(1)}_0(\omega |s-s_m|)A(s)e^{i\omega s\cos\theta}ds\\\nonumber
				&=2(1+s_m)e^{i\omega s_0\cos\theta}\int_{0}^{1}H^{(1)}_0(\omega(1+s_m) t)A(s_m-(1+s_m)t)e^{-i\omega(1+s_m) t\cos\theta}dt\\\nonumber
				&\quad+2e^{i\omega s_m\cos\theta}\int_{0}^{s_m}H^{(1)}_0(\omega t)A(t+s_m)e^{i\omega t\cos\theta}dt\\\label{eqn:geometrical_optics_approximation_in_our_form}
				&=e^{i\omega s_m\cos\theta}\left((1+s_m)I^{(3)}_{\omega(1+s_m),-\cos\theta}[A_1]+(1-s_m)I^{(3)}_{\omega(1-s_m),\cos\theta}[A_2]\right),
			\end{align}
			where we took $A_1(x)= A\left((s_0-1)/2-x(s_0+1)/2\right)$, $A_2(x)=A\left((s_0+1)/2+x(1-s_0)/2\right)$. Thus we can consider the approximation of \eqref{eqn:geometrical_optics_approximation_in_our_form} using the direct Filon method with recursive moment computation as described in \S\ref{sec:recursive_moment_computation_wave_scattering}. 
			
			The performance of this method is demonstrated by the results in Fig.~\ref{fig:numerical_experiments_direct_filon_hankel}. Here we choose $a=0.25, \theta=\pi/4,s_m=0$. In Fig.~\ref{fig:asymptotic_convergence_hankel_filon} we see the behaviour of the \textit{relative} error of the direct Filon method as a function of $\omega$ with $\nu=6$ fixed. We recall from Prop.~\ref{prop:filon_paradigm_for_alpha_zero} that the direct Filon method has asymptotic error $\mathcal{O}(\omega^{-s-2}\log\omega)$ and a similar argument shows that $2\mathcal{S}(\partial_n\psi_i)(s_n)$ has asymptotic behaviour $\mathcal{O}(\omega^{-1}\log\omega)$. Thus we expect the relative error to behave like $\mathcal{O}(\omega^{-s-1})$ which is confirmed in Fig.~\ref{fig:asymptotic_convergence_hankel_filon}. This means that the direct Filon method can approximate the integral to a fixed relative error at uniform cost in $\omega$.
			
			\begin{figure}[h!]
				\centering
				\begin{subfigure}[h]{0.49\linewidth}
					\centering
					\includegraphics[width=0.95\textwidth]{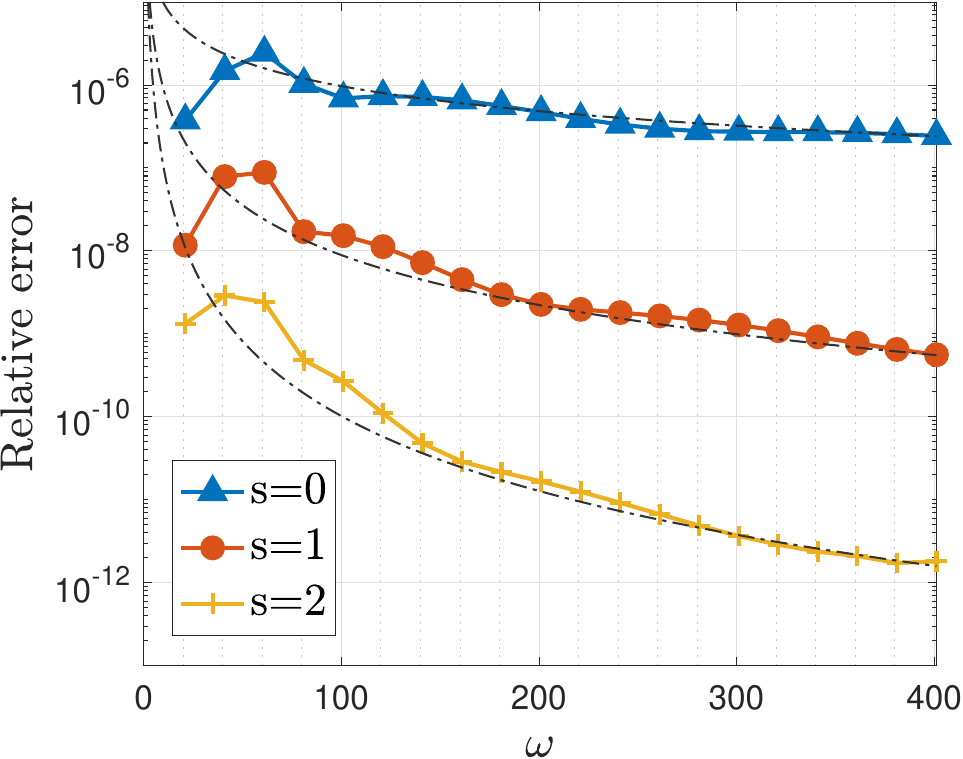}
					\caption{Relative error as function of $\omega$ for fixed $\nu=6$.}
					\label{fig:asymptotic_convergence_hankel_filon}
				\end{subfigure}%
				\begin{subfigure}[h]{0.49\linewidth}
					\centering
					\includegraphics[width=0.95\textwidth]{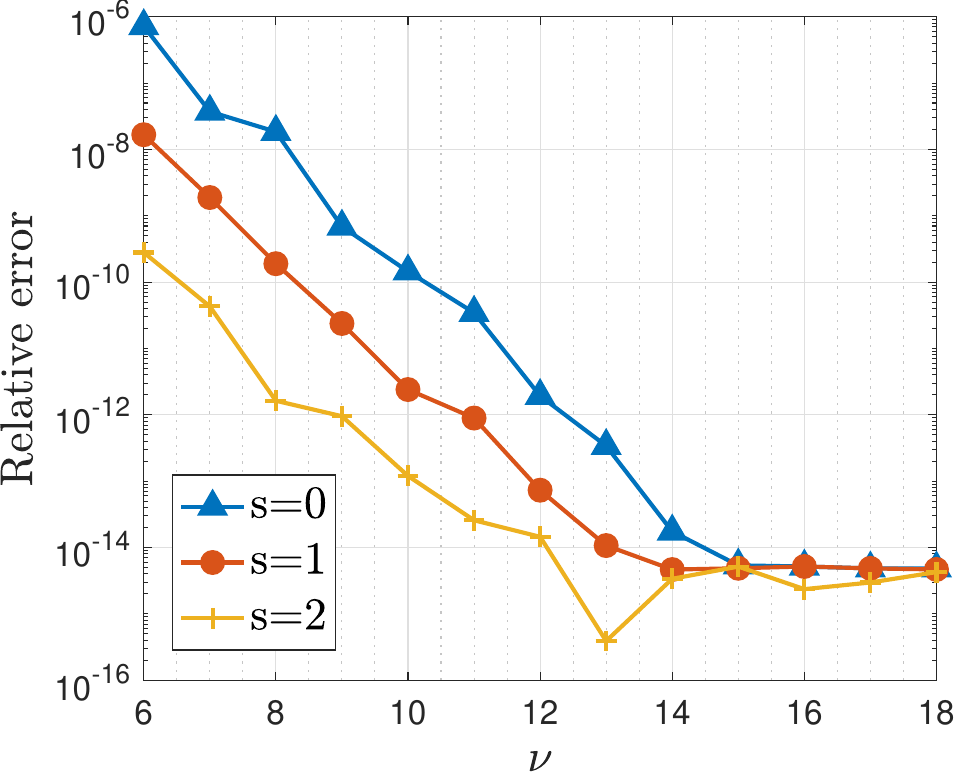}
					\caption{Relative error as function of $\nu$ for fixed $\omega=100$.}
					\label{fig:nu_convergence_hankel_filon}
				\end{subfigure}
				\caption{Relative error of the direct Filon method for evaluating $\mathcal{S}(\partial_n\psi_i)(s_m)$ with $a=0.25,\theta=\pi/4, s_m=0$.}
				\label{fig:numerical_experiments_direct_filon_hankel}
			\end{figure}
			
			In Fig.~\ref{fig:nu_convergence_hankel_filon} we consider the convergence properties of the method for a fixed $\omega=100$ as $\nu$ increases. Since $A_1,A_2$ are smooth we expect, by Corollary~\ref{cor:nu_explicit_error_estimates}, to find spectral convergence in $\nu$ for any fixed value of $\omega$. This is indeed confirmed in Fig.~\ref{fig:nu_convergence_hankel_filon}. In both numerical examples the reference solution for the true integral was computed with a graded Clenshaw--Curtis method (as described in \S\ref{sec:numerical_examples_algebraic_singularities}) with $M=6000,\nu=10,r=40$.
			
			\section{Concluding remarks}
			\label{sec:conclusions}
			In this work we sought to address the `moment-problem' for Filon methods by providing a general framework for constructing recurrences satisfied by the Filon quadrature moments. This framework is based on the observation that many physically relevant oscillatory kernels are in the null space of certain differential operators whose action on the interpolation basis is represented by a banded matrix. The recursive moment computation allowed us to construct direct Filon methods for several examples of interest, two of which we studied in further detail: integrals with algebraic singularities and stationary points and integrals involving a Hankel function. For the former we proved rigorous stability results guaranteeing that the initial moments can be computed with at worst linear error growth. We also demonstrated the advantageous properties of the direct Filon method which perfectly matches the asymptotic behaviour of the integral. The second type of integrals are relevant in evaluating the geometrical optics approximation in high-frequency wave scattering. Based on numerical evidence we found that the recurrences incur only weak (algebraic) error growth as long as $N\lesssim \omega$, meaning the recurrences are a suitable means for computing the quadrature moments for most practical purposes. We provided rigorous convergence results that allow the understanding of both $\nu$- and $\omega$-dependency of the quadrature error and showed an application to high-frequency wave scattering of a Gaussian beam on a finite plate in two dimensions.
			
			In the application to wave scattering problems we found that even when initial moments cannot be expressed explicitly in terms of simple functions, one may still be able to provide an expression that is easy to evaluate numerically (for instance an exponentially decaying integral). This is closely related to the ideas of numerical steepest descent (NSD), where oscillatory integrals are written in terms of exponentially decaying integrals by moving to the complex plane. However, NSD requires analyticity of the integrand at least in a neighbourhood of the domain of integration, which is in contrast to Filon methods that only require the weaker condition that amplitude is well-approximable by polynomials. Therefore we believe future research could focus on combining the two approaches -- by providing a polynomial approximation to an amplitude of limited analyticity followed by the use of NSD to help evaluate the oscillatory integral over the polynomial which is an entire function.
			
			\section*{Acknowledgements}
			The authors would like to thank Alfredo Dea\~no (Universidad Carlos III de Madrid), Andrew Gibbs (University College London), Daan Huybrechs (KU Leuven), Anastasia Kisil (University of Manchester) and Sheehan Olver (Imperial College London) for several interesting discussions about highly oscillatory quadrature and special functions. We also thank Victor Domínguez (Universidad P\'ublica de Navarra) for sharing the Matlab implementation for his Filon method with us. Finally, the authors gratefully acknowledge support from the UK Engineering and Physical Sciences Research Council (EPSRC) grant EP/L016516/1 for the University of Cambridge Centre for Doctoral Training, the Cambridge Centre for Analysis. GM also gratefully acknowledges funding from the European Research Council (ERC) under the European Union’s Horizon 2020 research and innovation programme (grant agreement No. 850941).
			
			\newpage
			\bibliographystyle{siam}
			\bibliography{IMANUM-refs}

			\appendix

			\begin{appendix}
			\section{Fast interpolation at Clenshaw--Curtis points, mid- and endpoint derivatives}\label{app:proof_of_fast_interpolation_at_CC_and_midpoints}
			\setcounter{section}{1}
			Here we provide a some more detail on how the interpolation problem \eqref{eqn:interpolation_problem_p2} can be solved at cost $\mathcal{O}(\nu\log\nu+s\nu+s^3)$ as described in \S\ref{sec:fast_interpolation_at_FCC_points}. Recall that we wish to solve the following interpolation problem:
			\begin{align*}
				q^{(j)}(0)=f^{(j)}(0),\,\, q^{(j)}(\pm1)=f^{(j)}(\pm1),\,\, j=0,\dots,s\,\,\,\,\text{\ and\ }\,\,\,\,q(c_n)=f(c_n),\,\, n=1,\dots,\nu,
			\end{align*}
			using an expansion in Chebyshev polynomials $q(x)=\sum_{n=0}^{\nu+3s+1}q_nT_n(x)$. Let us adopt the notation used by \cite{Gao2017a} and define
			\begin{align*}
				\hat{q}_0&=2q_0, \quad \hat{q}_k=q_k, \quad k=1,\dots,\nu,\quad \hat{q}_{\nu+1}=2q_{\nu+1},\\
				h_j&=f_j-\sum_{m=\nu+2}^{\nu+3s}q_m\cos\left(\frac{jm\pi}{\nu+1}\right),\quad  f_j=f\left(\cos\frac{j\pi}{\nu+1}\right),\quad j=0,\dots,\nu+1.
			\end{align*}
			Then the interpolation conditions $q(c_n)=f(c_n), n=0,\dots,\nu+1$, are equivalent to saying that $\mathcal{C}_{\nu+1}\hat{\bm{q}}=\bm{h}$, where $\mathcal{C}_{\nu+1}$ is the discrete cosine transform DCT-I. The inverse is
			\begin{align}\label{eqn:inverse_DCT-I_for_pmhat}
				\hat{q}_m=\left(\mathcal{C}_{\nu+1}^{-1}\bm{h}\right)_m=\frac{2}{\nu+1}\sideset{}{''}\sum_{j=0}^{\nu+1}h_j \cos \left(\frac{jm\pi}{\nu+1} \right) \quad \text{for\ }m = 0, \dots, \nu+1,
			\end{align}
			where $\sideset{}{''}\sum_{j=0}^{\nu+1}$ means that for $j=0$ and $j=\nu+1$ the terms are halved. We can simplify the expressions for $h_j$ as follows:
			\begin{align}\label{eqn:simplification_hj}
				h_j&=f_j-\sum_{m=\nu+2}^{\nu+3s+1}\cos\left(\frac{jm\pi}{\nu+1}\right)q_m=f_j-\sum_{m=1}^{3s}(-1)^j\cos\left(\frac{jm\pi}{\nu+1}\right)q_{\nu+1+m},\quad j=0,\dots,\nu+1.
			\end{align}
			Using \eqref{eqn:simplification_hj} in \eqref{eqn:inverse_DCT-I_for_pmhat} we find for $m=0,\dots, \nu+1$ (and with $\check{\bm{q}}=\mathcal{C}_{\nu+1}^{-1}\bm{f}$):
			\begin{align*}
				\hat{q}_m&=\check{q}_m-\frac{2}{\nu+1}\sum_{n=1}^{3s}q_{\nu+1+n}\left[\sideset{}{''}\sum_{j=0}^{\nu+1}(-1)^j\cos\left(\frac{jn\pi}{\nu+1}\right) \cos \left(\frac{jm\pi}{\nu+1} \right) \right]\\
				&=\check{q}_m-\frac{1}{\nu+1}\sum_{n=1}^{3s}q_{\nu+1+n}\left[\sideset{}{''}\sum_{j=0}^{\nu+1}(-1)^j\cos\left(\frac{j(n+m)\pi}{\nu+1}\right)+ \sideset{}{''}\sum_{j=0}^{\nu+1}(-1)^j\cos\left(\frac{j(n-m)\pi}{\nu+1}\right) \right].
			\end{align*}
			Now because $\nu$ is odd, one can quickly check using standard trigonometric identities that
			\begin{align*}
				\sideset{}{''}\sum_{j=0}^{\nu+1}(-1)^j\cos\left(\frac{j(n+m)\pi}{\nu+1}\right)=\begin{cases}
					0,&n+m\neq \nu+1\\
					\nu+1,&n+m=\nu+1
				\end{cases}.
			\end{align*}
			Thus we find $\hat{q}_m=\check{q}_m-\sum_{n=1}^{3s}q_{\nu+1+n}(\delta_{n+m,\nu+1}+\delta_{n-m,\nu+1})$ which implies
			\begin{align*}
				q_n&=\frac{1}{2}\check{q}_n,\,\, n=0,\nu+1,\quad	q_n=\check{q}_n,\,\, n=1,\dots,\nu-3s,\quad q_n=\check{q}_n-q_{2\nu-n+2},\,\, n=\nu-3s+1,\dots,\nu.
			\end{align*}
			The remaining interpolation conditions $q^{(j)}(0)=f^{(j)}(0), q^{(j)}(\pm1)=f^{(j)}(\pm1),j=1,\dots,s,$ are equivalent to the following $3s\times 3s$ system allowing us to find $q_{\nu+2},\dots,q_{\nu+3s+1}$:
			\begin{align*}
				\sum_{n=1}^{3s}q_{\nu+1+n}\left[T_{\nu+1+n}^{(j)}(-1)-T_{\nu+1-n}^{(j)}(-1)\right]&=f^{(j)}(-1)-\sideset{}{''}\sum_{n=0}^{\nu+1}\check{q}_nT_n^{(j)}(-1)\\
				\sum_{n=1}^{3s}q_{\nu+1+n}\left[T_{\nu+1+n}^{(j)}(0)-T_{\nu+1-n}^{(j)}(0)\right]&=f^{(j)}(0)-\sideset{}{''}\sum_{n=0}^{\nu+1}\check{q}_nT_n^{(j)}(0)\\
				\sum_{n=1}^{3s}q_{\nu+1+n}\left[T_{\nu+1+n}^{(j)}(1)-T_{\nu+1-n}^{(j)}(1)\right]&=f^{(j)}(1)-\sideset{}{''}\sum_{n=0}^{\nu+1}\check{q}_nT_n^{(j)}(1).
			\end{align*}
			Note that the coefficients in this linear system can be found explicitly:
			\begin{align*}
				T_n^{(j)}(\pm1)&=(\pm 1)^{n-j}\frac{2^j j! n(n+j-1)!}{(2j)!(n-j)!}, \quad \text{for\ }0\leq j\leq n \text{\ and\ }n+j\geq 1,\\
				T_{n}^{(j)}(0)&=\begin{cases}
					(-1)^{r}\frac{n(n-r-1)!}{r!}2^{j-1},&\quad r=(n-j)/2\in \mathbb{N}\cup \{0\}\\
					0,&\quad\text{otherwise,}
				\end{cases}
			\end{align*}
			where the former expression is proved in \cite[Eq.~(2.3)]{Gao2017a} and the latter follows from the expansion of $T_n$ in the usual monomial basis \cite[Eq.~22.3.6]{abramowitz1965handbook}.
			\section{Proof of Thm.~\ref{thm:full_stability_quadratic_oscillator}}\label{app:proof_of_thm_full_stability_quadratic}
			\setcounter{section}{2}
			We recall the statement of Thm.~\ref{thm:full_stability_quadratic_oscillator}:
			\begin{theorem}
				Suppose the moments $\check{\tilde{\sigma}}_n$ are computed using \eqref{eqn:recurrence_quadratic_oscillator_chebyshev_moments1} with slightly perturbed initial conditions: $\check{\tilde{\sigma}}_0=\tilde{\sigma}_0+\epsilon_0$, $\check{\tilde{\sigma}}_2=\tilde{\sigma}_2+\epsilon_2$, for some $|\epsilon_0|,|\epsilon_2|<\epsilon$. Then, for any $n$ with $2n+1<\omega$,
				\begin{align*}
					|\check{\tilde{\sigma}}_{2n}-\tilde{\sigma}_{2n}|< \frac{8n\omega^{\frac{1}{2}}}{3\left(\omega^2-(2n+1)^2\right)^{\frac{1}{4}}}\left(2+\frac{1}{\omega}\right)\epsilon.
				\end{align*}
			\end{theorem}
			\begin{proof}
				Let $x_n=\check{\tilde{\sigma}}_n-\tilde{\sigma}_n$. By linearity it suffices to solve the recurrence \eqref{eqn:recurrence_quadratic_oscillator_chebyshev_moments1} for $x_n$ with initial conditions $x_0=\epsilon_0,\,x_2=\epsilon_2$, and $x_{2n+1}=0$.
				Substitute $x_n=\rho_{n+1}-\rho_{n-1}, n\geq 0,$ with $\rho_{-1}:=-\epsilon_0/2$ and let $\gamma_{n}=\rho_{n+2}+\frac{2n}{i\omega}\rho_n-\rho_{n-2}$ for $n\geq 1$. We also note that $x_{2n+1}=0$ so we may, without loss of generality, choose $\rho_{2n}=0, n\geq 0$. Then the recurrence \eqref{eqn:recurrence_quadratic_oscillator_chebyshev_moments1} is equivalent to solving
				\begin{align*}
					\gamma_{n+2}-2\gamma_n+\gamma_{n-2}=0, \quad n\geq 3,
				\end{align*}
				with the initial conditions $\gamma_{1}=\gamma_3=\rho_{3}+\frac{2}{i\omega}\rho_1-\rho_{-1}=\epsilon_2+\epsilon_0(1+\frac{1}{i\omega}).$
				Here equality of $\gamma_1=\gamma_3$ was achieved by setting $\rho_{-1}=-\epsilon_0/2$ and using \eqref{eqn:recurrence_quadratic_oscillator_chebyshev_moments1} for $n=1$. Hence we have $\gamma_{2n+1}=\gamma_1, \,\forall n\geq1$. Therefore, the problem of finding $x_{2n}$ from given initial conditions is equivalent to
				\begin{align}\label{eqn:error_initial_value_problem}
					\rho_{n+2}+\frac{2n}{i\omega}\rho_n-\rho_{n-2}=\gamma_1, \, n\geq 1,\,\text{subject to\ }\rho_{1}=\epsilon_0/2,\rho_3=\epsilon_{2}+\epsilon_0/2 \text{\ and\ }\rho_{2n}=0,
				\end{align}
				where $\gamma_1=x_2+x_0(1+\frac{1}{i\omega})$. To solve this let us consider the homogeneous recurrence
				\begin{align}\label{eqn:homogeneous_recurrence_for_y}
					a_{2n+3}+\frac{2(2n+1)}{i\omega} a_{2n+1}-a_{2n-1}=0.
				\end{align}
				This has two linearly independent solutions that can be expressed in terms of spherical Bessel functions $j_n,y_n$ (see \cite[\S 10.1]{abramowitz1965handbook}) namely
				\begin{align*}
					a_{2n+1}=A\, \revisions{i^n}j_n\left(\frac{\omega}{2}\right)+B\,\revisions{i^n}y_n\left(\frac{\omega}{2}\right),
				\end{align*}
				for $A,B\in\mathbb{C}$. Let us write $\tilde{j}_n(z):=i^n\revisions{j_n}(z)$ and $\tilde{y}_n(z):=i^n \revisions{y_n}(z)$, then the solution to \eqref{eqn:homogeneous_recurrence_for_y} with initial conditions $a_1,a_3$ is given by
				\begin{align*}
					\begin{pmatrix}
						a_{2n+3}\\
						a_{2n+1}
					\end{pmatrix}=\begin{pmatrix}
						\tilde{j}_{n+1}(\omega/2)&\tilde{y}_{n+1}(\omega/2)\\
						\tilde{j}_{n}(\omega/2)&\tilde{y}_{n}(\omega/2)
					\end{pmatrix}
					\begin{pmatrix}
						\tilde{j}_{1}(\omega/2)&\tilde{y}_{1}(\omega/2)\\
						\tilde{j}_{0}(\omega/2)&\tilde{y}_{0}(\omega/2)
					\end{pmatrix}^{-1}
					\begin{pmatrix}
						a_3\\
						a_1
					\end{pmatrix},\quad n\geq 1
				\end{align*}
				Now we have the following useful identity \cite[Eq.~10.1.31]{abramowitz1965handbook}:
				\begin{align*}
					\tilde{j}_{n}(z)\tilde{y}_{n-1}(z)-\tilde{j}_{n-1}(z)\tilde{y}_{n}(z)=(-1)^{n+1}iz^{-2}, \quad n\geq 1.
				\end{align*}
				Hence we have
				\begin{align*}
					\mathrm{det}\begin{pmatrix}
						\tilde{j}_{n+1}(\omega/2)&\tilde{y}_{n+1}(\omega/2)\\
						\tilde{j}_{n}(\omega/2)&\tilde{y}_{n}(\omega/2)
					\end{pmatrix}=(-1)^{n+1}i\left(\frac{2}{\omega}\right)^{2}.
				\end{align*}
				Thus we can write the solution to \eqref{eqn:error_initial_value_problem}, by discrete variation of constants, as
				\begin{align}
					\hspace{-0.4cm}\begin{split}\label{eqn:discrete_variation_of_constants_quadratic_oscillator}
						\begin{pmatrix}
							\rho_{2n+3}\\
							\rho_{2n+1}
						\end{pmatrix}&=i\left(\frac{\omega}{2}\right)^{2}\begin{pmatrix}
							\tilde{j}_{n+1}(\omega/2)&\tilde{y}_{n+1}(\omega/2)\\
							\tilde{j}_{n}(\omega/2)&\tilde{y}_{n}(\omega/2)
						\end{pmatrix}\left[\sum_{k=1}^n (-1)^{k+1}\begin{pmatrix}
							\tilde{y}_{k}(\omega/2)&-\tilde{y}_{k+1}(\omega/2)\\
							-\tilde{j}_{k}(\omega/2)&\tilde{j}_{k+1}(\omega/2)
						\end{pmatrix}\begin{pmatrix}
							\gamma_1\\
							0
						\end{pmatrix}\right.\\
						&\quad \hspace{6cm}+\left.(-1)\begin{pmatrix}
							\tilde{y}_{0}(\omega/2)&-\tilde{y}_{1}(\omega/2)\\
							-\tilde{j}_{0}(\omega/2)&\tilde{j}_{1}(\omega/2)
						\end{pmatrix}\begin{pmatrix}
							\rho_{3}\\
							\rho_{1}
						\end{pmatrix}\right]
					\end{split}
				\end{align}
				Now we note according to \cite[Eq.~(1) \S 13.74]{watson1995treatise} for $z\geq\nu+1/2\geq 1$ 
				\begin{align*}
					|j_\nu(z)|^2+|y_\nu(z)|^2<\frac{1}{|z|\sqrt{z^2-(\nu+1/2)^2}}.
				\end{align*}
				Thus we can apply Cauchy--Schwarz to \eqref{eqn:discrete_variation_of_constants_quadratic_oscillator} and find
				\begin{align*}
					|\rho_{2n+3}|&\leq \left(\frac{\omega}{2}\right)^2 \left(\frac{1}{\left|\frac{\omega}{2}\right|\sqrt{\left(\frac{\omega}{2}\right)^2-(n+3/2)^2}}\right)^{\frac{1}{2}}\left[\sum_{k=1}^n\left(\frac{1}{\left|\frac{\omega}{2}\right|\sqrt{\left(\frac{\omega}{2}\right)^2-(k+1/2)^2}}\right)^{\frac{1}{2}}|\gamma_1|\right.\\
					&\quad \hspace{2cm}\left.+\left(\frac{1}{\left|\frac{\omega}{2}\right|\sqrt{\left(\frac{\omega}{2}\right)^2-(3/2)^2}}\right)^{\frac{1}{2}}|\rho_1|+ \left(\frac{1}{\left|\frac{\omega}{2}\right|\sqrt{\left(\frac{\omega}{2}\right)^2-(1/2)^2}}\right)^{\frac{1}{2}}|\rho_3|\right]\\
					&\leq \frac{\omega}{\left(\omega^2-(2n+3)^2\right)^{\frac{1}{4}}}\left[\sum_{k=1}^n\frac{1}{\left(\omega^2-(2k+1)^2\right)^{\frac{1}{4}}}|\gamma_1|+\frac{1}{\left(\omega^2-3^2\right)^{\frac{1}{4}}}|\rho_1|+\frac{1}{\left(\omega^2-1\right)^{\frac{1}{4}}}|\rho_3|\right].
				\end{align*}
				Finally, we notice by the integral test for $2n+1<\omega$:
				\begin{align*}
					\sum_{k=1}^n\frac{1}{(\omega^2-(2k+1)^2)^{\frac{1}{4}}}&\leq \int_{0}^{n} \frac{1}{(\omega+2x+1)^{\frac{1}{4}}}\frac{1}{(\omega-2x-1)^{\frac{1}{4}}}dx\leq \omega^{-\frac{1}{4}}\int_0^n\frac{dx}{(\omega-2x-1)^{\frac{1}{4}}}\\
					&\revisions{=}\,\, \frac{2}{3}\omega^{-\frac{1}{4}}\left((\omega-1)^{\frac{3}{4}}-(\omega-2n-1)^{\frac{3}{4}}\right)\\
					&\leq \frac{2}{3}\omega^{-\frac{1}{2}}\left(\omega-(\omega-(2n+1))\right)=\omega^{-\frac{1}{2}}\frac{2(2n+1)}{3}
				\end{align*}
				Thus we have overall
				\begin{align*}
					|\rho_{2n+3}|\leq \frac{\omega^{\frac{1}{2}}}{\left(\omega^2-(2n+3)^2\right)^{\frac{1}{4}}}\left[\frac{2(2n+1)}{3}\left(2+\frac{1}{\omega}\right)+\frac{1}{2\left(1-\frac{\revisions{9}}{\omega^2}\right)^\frac{1}{4}}+\frac{3}{2\left(1-\frac{\revisions{1}}{\omega^2}\right)^\frac{1}{4}}\right]\epsilon
				\end{align*}
				and a similar estimate holds for $\rho_{2n+1}$ and hence the result follows, since $x_{2n+2}=\rho_{2n+3}-\rho_{2n+1}$.
			\end{proof}
			\section{Proof of Thm.~\ref{thm:square_root_stability_algebraic_singularity}}\label{app:proof_of_thm_square_root_stability_algebraic_singularity} \setcounter{section}{3}
			We recall the statement of Thm.~\ref{thm:square_root_stability_algebraic_singularity}:
			\begin{theorem}
				Suppose the moments $\check{\tilde{\sigma}}_n$ are computed using \eqref{eqn:recurrence_algebraic_singularities_chebyshev_moments}
				with the perturbed initial conditions $\check{\tilde{\sigma}}_0=\tilde{\sigma}+\epsilon_0$, $\check{\tilde{\sigma}}_1=\check{\tilde{\sigma}}_{-1}=\tilde{\sigma}_1+\epsilon_1$, $\check{\tilde{\sigma}}_2=\check{\tilde{\sigma}}_{-2}=\tilde{\sigma}_2+\epsilon_2$, $|\epsilon_j|<\epsilon$ for some $\epsilon>0$, and assume $\check{\tilde{\sigma}}_3=\check{\tilde{\sigma}}_{-3}$. Then, whenever $	n+1<\min\{C\sqrt{\omega},\omega\}$ for a given $C>0$, we have
				\begin{align*}
					|\check{\tilde{\sigma}}_n-\tilde{\sigma}_n|\leq \frac{(K_0+nK_1)}{2}\epsilon \left(\frac{K_2\omega^{\frac{1}{2}}}{K_2\omega^{\frac{1}{2}}-1}\exp\left(\frac{C}{K_2-\omega^{-\frac{1}{2}}}\right)+1\right)
				\end{align*}
				where the constants $K_0,K_1,K_2$ are independent of $n$ and are given by 
				\begin{align*}
					K_0= \frac{2\sqrt{\omega}}{\sqrt{\omega-C^2}},\quad K_1=\frac{\omega+2+|\alpha|}{\sqrt{\omega^2-C^2\omega}},\quad K_2=\frac{\left(\omega-C^2\right)^{\frac{1}{4}}}{\omega^{\frac{1}{4}}\sqrt{2|\alpha|+2}}.
				\end{align*}
			\end{theorem}
			\begin{proof}
				Define $x_n:=\check{\tilde{\sigma}}_n-\tilde{\sigma}_n$ then, by linearity it suffices to solve for $x_n$ which satisfies \eqref{eqn:recurrence_algebraic_singularities_chebyshev_moments} subject to $x_0=\epsilon_0,x_{1}=x_{-1}=\epsilon_1,x_{2}=x_{-2}=\epsilon_2, x_{3}=x_{-3}$. We can formulate the recurrence equivalently in the form
				\begin{align*}
					x_{n-3}-\frac{2(n-2)}{	i\omega}x_{n-2}-x_{n-1}+\frac{4}{	i\omega}x_n-x_{n+1}&+\frac{2(n+2)}{i\omega}x_{n+2}+ x_{n+3}\\
					&=-\frac{2\alpha}{	i\omega}(x_{n-2}-2x_n+x_{n+2})-\frac{2}{i\omega}(x_{n-2}+2x_n+x_{n+2}),
				\end{align*}
				for $n\geq 0$. We can solve the homogeneous difference equation corresponding to the left hand side exactly, and we view the right hand side as a perturbation of the recurrence in the following sense: Let
				\begin{align*}
					x_{n}=\sum_{j=0}^{n-2}\omega^{-j}x_{n}^{(j)},\quad n\geq 2 .
				\end{align*}
				Then $x_n$ is the unique solution of the recurrence \eqref{eqn:recurrence_algebraic_singularities_chebyshev_moments} with the specified initial conditions if we define
				\begin{align}\label{eqn:zeroth_order_equation_algebraic_singularities}
					x_{n-3}^{(0)}-\frac{2(n-2)}{	i\omega}x_{n-2}^{(0)}-x_{n-1}^{(0)}-x_{n+1}^{(0)}+\frac{2(n+2)}{i\omega}x_{n+2}^{(0)}+ x_{n+3}^{(0)}=0,\quad n\geq 0,
				\end{align}
				with the initial conditions $x_n^{(0)}=\epsilon_0,x_{-1}^{(0)}=x_{1}^{(0)}=\epsilon_1,x_{-2}^{(0)}=x_{2}^{(0)}=\epsilon_2$, and
				\begin{align*}
					x_{-3}^{(0)}=x^{(0)}_{3}&=\frac{2\alpha-2}{	i\omega}\epsilon_0+\epsilon_{1}-\frac{2(3+\alpha)}{i\omega}\epsilon_{2},
				\end{align*}
				and if we further choose
				\begin{align}
					\begin{split}\label{eqn:j+1_from_j_recurrence}
						x_{n-3}^{(j+1)}-\frac{2(n-2)}{	i\omega}x_{n-2}^{(j+1)}-x_{n-1}^{(j+1)}-x_{n+1}^{(j+1)}+\frac{2(n+2)}{i\omega}x_{n+2}^{(j+1)}+ x_{n+3}^{(j+1)}&=2i\alpha(x_{n-2}^{(j)}-2x_n^{(j)}+x_{n+2}^{(j)})\\
						&\quad+2i(x_{n-2}^{(j)}+2x_n^{(j)}+x_{n+2}^{(j)})
					\end{split}
				\end{align}
				for $j\geq 0$ and $n\geq j$, under the extra symmetry condition $x^{(j)}_n=x^{(j)}_{-n}$ and with the initial conditions $x_{j+2}^{(j+1)},x_{j+1}^{(j+1)},x_{j}^{(j+1)},x_{j-1}^{(j+1)}=0$.
				
				Let us firstly solve \eqref{eqn:zeroth_order_equation_algebraic_singularities}: We let $\gamma_{n}^{(0)}=x_{n+1}^{(0)}+\frac{2n}{i\omega}x_n^{(0)}-x_{n-1}^{(0)},\,n\geq -2$, which ensures that \eqref{eqn:zeroth_order_equation_algebraic_singularities} is equivalent to
				\begin{align*}
					\gamma_{n+2}^{(j+1)}-\gamma_{n-2}^{(j+1)}&=0,\quad n\geq 0\\
					\gamma_{-2}^{(0)}=\frac{2-2\alpha}{i\omega}\epsilon_0+\frac{2+2\alpha}{i\omega}\epsilon_2,\gamma_{-1}^{(0)}&=\epsilon_0-\frac{2}{i\omega}\epsilon_1-\epsilon_2,\gamma^{(0)}_{0}=0,\gamma^{(0)}_{1}=\epsilon_{2}+\frac{2}{i\omega}\epsilon_1-\epsilon_0,
				\end{align*}
				Thus $\gamma_{4n+j}^{(0)}=\gamma_j^{(0)}$ for $j=-2,\dots,1$ and $n\geq 0$. Hence, it remains to solve
				\begin{align}\label{eqn:Bessel_recurrence_for_zero_order_terms}
					x_{n+1}^{(0)}+\frac{2n}{i\omega}x_n^{(0)}-x_{n-1}^{(0)}=\gamma_{n}^{(0)},\quad n\geq 1,
				\end{align}
				with initial conditions $x_0^{(0)}=\epsilon_0,x_{1}^{(0)}=\epsilon_1$. As described by \cite{Dominguez2011} the homogeneous solutions of this recurrence can be expressed in terms of Bessel functions, where it will be convenient to express the solutions in terms of the functions $\tilde{J}_{n}(\omega):=i^n J_n(\omega),\, \tilde{Y}_n(\omega):=i^nY_n(\omega)$, where $J_n(\omega),Y_n(\omega)$ are the standard Bessel functions of the first and second kind, as defined for instance in \cite{abramowitz1965handbook}. The solution to \eqref{eqn:Bessel_recurrence_for_zero_order_terms} can be written using discrete variation of constants as
				\begin{align}
					\begin{split}\label{eqn:full_solution_zeroth_order_terms}
						\begin{pmatrix}
							x_{n+1}^{(0)}\\
							x_{n}^{(0)}
						\end{pmatrix}&=\frac{i\pi \omega}{2}\begin{pmatrix}
							\tilde{J}_{n+1}(\omega)&\tilde{Y}_{n+1}(\omega)\\
							\tilde{J}_n(\omega)&\tilde{Y}_n(\omega)
						\end{pmatrix}
						\left(\sum_{k=1}^n
						(-1)^{k+1}\begin{pmatrix}
							\tilde{Y}_k(\omega)&-\tilde{Y}_{k+1}(\omega)\\
							-\tilde{J}_k(\omega)&\tilde{J}_{k+1}(\omega)
						\end{pmatrix}
						\begin{pmatrix}
							\gamma_{k}^{(0)}\\
							0
						\end{pmatrix}\right.\\
						&\hspace{6.3cm}\left.+(-1)\begin{pmatrix}
							\tilde{Y}_{0}(\omega)&-\tilde{Y}_{1}(\omega)\\
							-\tilde{J}_{0}(\omega)&\tilde{J}_{1}(\omega)
						\end{pmatrix} \begin{pmatrix}
							\epsilon_{1}\\
							\epsilon_{0}
						\end{pmatrix}\right),
					\end{split}
				\end{align}
				for $n\geq 1$. Here, analogously to \cite{Dominguez2011}, we used the identity \cite[Eq.~(9.1.16)]{abramowitz1965handbook}
				\begin{align*}
					det\begin{pmatrix}
						\tilde{J}_{n+1}(\omega)&\tilde{Y}_{n+1}(\omega)\\
						\tilde{J}_n(\omega)&\tilde{Y}_n(\omega)
					\end{pmatrix}=(-1)^{\revisions{n+2}}\frac{\revisions{2i}}{\pi \omega}
				\end{align*}
				We can now perform a similar estimate to \cite[p. 1271]{Dominguez2011} on \eqref{eqn:full_solution_zeroth_order_terms}: Note the upper bound given by \cite[\S 13.74]{watson1995treatise}
				\begin{align}\label{eqn:bound_on_modulus_of_bessel_functions}
					|J_n(\omega)|^2+|Y_n(\omega)|^2\leq \frac{2}{\pi}\frac{1}{\sqrt{\omega^2-n^2}},\quad\text{for\ }\omega>n>1/2.
				\end{align}
				Combining this with Cauchy--Schwarz on \eqref{eqn:full_solution_zeroth_order_terms} yields, for $n\geq 2$,
				\begin{align*}
					|x_n^{(0)}|\leq \frac{\omega}{(\omega^2-n^2)^{\frac{1}{4}}}\left(\sum_{k=1}^{n-1}\frac{1}{(\omega^2-k^2)^{\frac{1}{4}}}|\gamma_k^{(0)}|+\epsilon\sum_{j=0}^1\frac{1}{(\omega^2-j^2)^{\frac{1}{4}}}\right)
				\end{align*}
				Thus, summing these contributions, we obtain the following estimate when $n<\min\{C\sqrt{\omega},\omega\}$:
				\begin{align}\label{eqn:estimate_for_sigma_0}
					|x_n^{(0)}|\leq \frac{\epsilon }{\sqrt{1-\frac{C^2}{\omega}}} \left(n \left(1+\frac{2+|\alpha|}{\omega}\right)+2\right).
				\end{align}
				We now consider the perturbed recurrence \eqref{eqn:j+1_from_j_recurrence} order by order. To do so let us write
				\begin{align*}
					f_n^{(j)}=2i(1+\alpha)\left(x_{n-2}^{(j)}+x_{n+2}^{(j)}\right)+2i(1-\alpha)x_n^{(j)}, \quad n\geq j.
				\end{align*}
				Thus, for $j\geq 0$, we need to solve the recurrence
				\begin{align*}
					x_{n-3}^{(j+1)}-\frac{2(n-2)}{	i\omega}x_{n-2}^{(j+1)}-x_{n-1}^{(j+1)}-x_{n+1}^{(j+1)}+\frac{2(n+2)}{i\omega}x_{n+2}^{(j+1)}+ x_{n+3}^{(j+1)}=f_n^{(j)},\quad n\geq j
				\end{align*}
				subject to the initial conditions $x_{j+2}^{(j+1)},x_{j+1}^{(j+1)},x_{j}^{(j+1)},x_{j-1}^{(j+1)}=0$. We again substitute $\gamma_{n}^{(j)}=x_{n+1}^{(j)}+\frac{2n}{i\omega}x_n^{(j)}-x_{n-1}^{(j)},n\geq j-2$, which ensures that this recurrence is equivalent to
				\begin{align*}
					\gamma_{n+2}^{(j+1)}-\gamma_{n-2}^{(j+1)}=f_n^{(j)},\quad n\geq j-2,\quad \gamma^{(j+1)}_{j-2},\gamma^{(j+1)}_{j-1},\gamma^{(j+1)}_{j},\gamma^{(j+1)}_{j+1}=0
				\end{align*}
				Therefore we easily find
				\begin{align*}
					\gamma_{4n+j}^{(j+1)}=\sum_{m=0}^{n-1}f_{4m+j+2}^{(j)},\quad
					\gamma_{4n+j+1}^{(j+1)}=\sum_{m=0}^{n-1}f_{4m+j+3}^{(j)},\quad
					\gamma_{4n+j-1}^{(j+1)}=\sum_{m=0}^{n-1}f_{4m+j+1}^{(j)},\quad
					\gamma_{4n+j-2}^{(j+1)}=\sum_{m=0}^{n-1}f_{4m+j}^{(j)}.
				\end{align*}
				Now it remains to solve
				\begin{align*}
					\gamma_{n}^{(j)}=x_{n+1}^{(j)}+\frac{2n}{i\omega}x_n^{(j)}-x_{n-1}^{(j)},\quad n\geq j+2
				\end{align*}
				with the initial conditions $x_{j+2}^{(j+1)},x_{j+1}^{(j+1)}=0$. Similarly to the case for $x_n^{(0)}$ we can write the solution in terms of $\tilde{J}_n(\omega),\tilde{Y}_n(\omega)$, which yields

				\begin{align*}
					\begin{pmatrix}
						x_{n+1}^{(j+1)}\\
						x_{n}^{(j+1)}
					\end{pmatrix}=\frac{i\pi \omega}{2}\begin{pmatrix}
						\tilde{J}_{n+1}(\omega)&\tilde{Y}_{n+1}(\omega)\\
						\tilde{J}_n(\omega)&\tilde{Y}_n(\omega)
					\end{pmatrix}
					\sum_{k=j+2}^n
					(-1)^{k+1}\begin{pmatrix}
						\tilde{Y}_k(\omega)&-\tilde{Y}_{k+1}(\omega)\\
						-\tilde{J}_k(\omega)&\tilde{J}_{k+1}(\omega)
					\end{pmatrix}
					\begin{pmatrix}
						\gamma_{k}^{(j+1)}\\
						0
					\end{pmatrix}.
				\end{align*}
				Therefore, we can use Cauchy--Schwarz and \eqref{eqn:bound_on_modulus_of_bessel_functions} similarly to above to estimate
				\begin{align}\label{eqn:estimate_sigma_j+1_in_gamma_j+1}
					|x_n^{(j+1)}|\leq \frac{\pi\omega}{2}\frac{2}{\pi}\frac{1}{(\omega^2-n^2)^{\frac{1}{4}}}\sum_{k=j+2}^{n-1}\frac{1}{(\omega^2-k^2)^{\frac{1}{4}}}|\gamma_{k}^{(j+1)}|
				\end{align}
				Now we recall $f_n^{(j)}=2i\alpha(x_{n-2}^{(j)}-2x_n^{(j)}+x_{n+2}^{(j)})+2i(x_{n-2}^{(j)}+2x_n^{(j)}+x_{n+2}^{(j)})$ which means that
				\begin{align*}
					\gamma_{4n+j}^{(j+1)}&=\sum_{m=0}^{n-1}f_{4m+j+2}^{(j)}=2i\alpha\left(x_{4n+j}^{(j)}+2\sum_{l=1}^{2n-1}(-1)^lx_{j+2l}^{(j)}\right)+2i\left(x_{4n+j}^{(j)}+2\sum_{l=1}^{2n-1}x_{j+2l}^{(j)}\right),\\
					\therefore\quad\quad|\gamma_{4n+j}^{(j+1)}|&\leq (2\alpha+2)\left(|x_{4n+j}^{(j)}|+2\sum_{l=1}^{2n-1}|x_{j+2l}^{(j)}|\right)
				\end{align*}
				Analogously we find for $k=-2,-1,1$:
				\begin{align*}
					|\gamma_{4n+k+j}^{(j+1)}|&\leq (2\alpha+2)\left(|x_{4n+k+j}^{(j)}|+2\sum_{l=1}^{2n-1}|x_{j+2l+k}^{(j)}|\right),
				\end{align*}
				where, of course, $x_{j}^{(j)},x_{j+1}^{(j)}=0$. To complete a total estimate on the size of $x_{n}^{(j+1)}$ we proceed as follows:
				\begin{claim}
					If $|x^{(j)}_{n+j}|\leq n^b$, for $b\geq 0$ and all $n+j+1<C\sqrt{\omega}$, then
					\begin{align*}
						|x_{n+j+1}^{(j+1)}|\leq \frac{2+2|\alpha|}{\sqrt{1-\frac{C^2}{\omega}}}\left(\frac{n^{b+2}}{(b+2)(b+1)}+2\frac{n^{b+1}}{b+1}+n^b\right).
					\end{align*}
				\end{claim}
				\begin{flushleft}
					\textit{Proof of Claim.}
				\end{flushleft}
				\begin{align*}
					|\gamma_{4n+j}^{(j+1)}|&\leq (2|\alpha|+2)\left(|x_{4n+j}^{(j)}|+2\sum_{l=1}^{2n-1}|x_{j+2l}^{(j)}|\right)\leq C(2|\alpha|+2)\left((4n)^b+2\sum_{l=1}^{2n-1} (2l)^b\right)\\
					&\leq C(2|\alpha|+2)\left((4n)^b+2\int_{0}^{2n}(2x)^b\,dx\right)=C(2|\alpha|+2)\left((4n)^b+\frac{1}{b+1}(4n)^{b+1}\right),
				\end{align*}
				where in the final step we used the integral test to find an upper bound. Analogously, we find in general
				\begin{align*}
					|\gamma_{k+j}^{(j+1)}|&\leq C(2|\alpha|+2)\left(\frac{1}{b+1}k^{b+1}+k^b\right), \quad k \geq 2.
				\end{align*}
				Thus we have, based on \eqref{eqn:estimate_sigma_j+1_in_gamma_j+1},
				\begin{align*}
					|x_{n+j+1}^{(j+1)}|&\leq \frac{\omega}{(\omega^2-(n+j+1)^2)^{\frac{1}{4}}}\sum_{k=j+2}^{n+j}\frac{1}{(\omega^2-k^2)^{\frac{1}{4}}}|\gamma_{k}^{(j+1)}|\\
					&\leq \frac{C(2|\alpha|+2)\omega}{(\omega^2-(n+j+1)^2)^{\frac{1}{4}}}\sum_{k=2}^{n}\frac{1}{(\omega^2-(j+k)^2)^{\frac{1}{4}}}\left(\frac{1}{b+1}k^{b+1}+k^b\right).
				\end{align*}
				Thus, if $n+1+j<C\sqrt{\omega}$, we can simplify the above estimate to complete the proof of the claim:
				\begin{align*}
					|x_{n+j+1}^{(j+1)}|&\leq \frac{C(2|\alpha|+2)}{\sqrt{1-\frac{C^2}{\omega}}}\sum_{k=2}^{n}\frac{1}{b+1}k^{b+1}+k^b\leq \frac{C(2|\alpha|+2)}{\sqrt{1-\frac{C^2}{\omega}}}\left[\frac{n^{b+2}}{(b+2)(b+1)}+2\frac{n^{b+1}}{(b+1)}+n^b\right].
				\end{align*}
				\begin{flushright}
					$\blacksquare$
				\end{flushright}
				Now we have shown in \eqref{eqn:estimate_for_sigma_0} that 
				\begin{align*}
					|x_n^{(0)}|\leq n\epsilon \underbrace{ \frac{ \left(1+\frac{2+|\alpha|}{\omega}\right)}{\sqrt{1-\frac{C^2}{\omega}}} }_{K_1}+\epsilon\underbrace{ \frac{2}{\sqrt{1-\frac{C^2}{\omega}}}}_{K_0}.
				\end{align*}
				Thus we have by linearity for $n\geq 2$
				\begin{align}\nonumber
					|x_{n}|\leq \sum_{j=0}^{n-2}\omega^{-j}\left|x_{n}^{(j)}\right|
					&\leq\sum_{j=0}^{n-2}\left(\frac{(2|\alpha|+2)}{\omega\sqrt{1-\frac{C^2}{\omega}}}\right)^{j}\left[\sum_{l=0}^{2j}\binom{2j}{l}\frac{(n-j)^{l}}{l!}\left(K_0\epsilon+K_1\epsilon\frac{n-j}{l+1}\right)\right]\\\label{eqn:detailed_upper_bound_sigma_n}
					&\leq (K_0+nK_1)\epsilon\sum_{j=0}^{n-2}\left(\frac{(2|\alpha|+2)}{\omega\sqrt{1-\frac{C^2}{\omega}}}\right)^{j}L_{2j}(-n+j)
				\end{align}
				where $L_{2j}$ are Laguerre polynomials and the final line follows from the explicit expansion of $L_{2j}$ in the usual monomial basis \cite[Eq.~22.3.9]{abramowitz1965handbook}. Thus we seek to find an upper bound for the function $f(z,n)= \sum_{j=0}^{n-2}z^{-j}L_{2j}(-n+j)$. Note to begin with that Laguerre polynomials are strictly monotonically decreasing for negative arguments, i.e. for $x<y<0$ we have for any $n\geq1$
				\begin{align*}
					L_{n}(x)>L_{n}(y)>L_n(0)=1, \text{\ and\ }L_0\equiv 1.
				\end{align*}
				This follows by induction from the identity \cite[Eqns. 22.5.17 \& 22.7.30]{abramowitz1965handbook}
				\begin{align*}
					\frac{d}{dx}L_{n+1}=\frac{d}{dx}L_{n}-L_{n},\quad n\geq 0,
				\end{align*}
				since we have $L_n(x)>0$ for any $x<0, n\geq0$ (because the zeros of the Laguerre polynomials are located in $[0,\infty)$ and $L_n(0)=1$). Thus we may estimate
				\begin{align*}
					f(z,n)&\leq \sum_{j=0}^{n-2}\left(z^{\frac{1}{2}}\right)^{-2j}L_{2j}(-n)\leq \frac{1}{2}\sum_{j=0}^{n-2}\left(z^{\frac{1}{2}}\right)^{-2j}L_{2j}(-n)+\frac{1}{2}\sum_{j=0}^{n-2}\left(-z^{\frac{1}{2}}\right)^{-2j}L_{2j}(-n)
				\end{align*}
				By the three-term recurrence for Laguerre polynomials \cite[22.7.12]{abramowitz1965handbook}, 
				\begin{align*}
					L_{n+1}(x)=L_n(x)-\frac{x}{n+1}L_n+\frac{n}{n+1}(L_n(x)-L_{n-1}(x)),
				\end{align*}
				and by induction we have $	L_{n+1}(x)>L_n(x)$ for any $x<0,n\geq 0$. Therefore
				\begin{align}\label{eqn:new_estimate_2}
					f(z,n)=f(z,n)+z^{-\frac{1}{2}}f(z,n)-z^{-\frac{1}{2}}f(z,n)\leq \frac{1}{2}\sum_{j=0}^{\infty}z^{-\frac{j}{2}}L_{j}(-n)+\frac{1}{2}\sum_{j=0}^\infty\left(-z^{\frac{1}{2}}\right)^jL_{j}(-n).
				\end{align}
				The generating function of the Laguerre polynomials \cite[Eq.~22.9.15]{abramowitz1965handbook} is
				\begin{align*}
					\sum_{j=0}^{\infty}a^{j}L_{j}(x)=\frac{1}{1-a}\exp\left(\frac{xa}{a-1}\right),\quad \forall x\in\mathbb{R},|a|<1,
				\end{align*}
				which allows us to simplify the estimate \eqref{eqn:new_estimate_2} to
				\begin{align*}
					f(z,n)\leq\frac{1}{2}\frac{z^{\frac{1}{2}}}{z^{\frac{1}{2}}-1}\exp\left(\frac{n}{z^{\frac{1}{2}}-1}\right)+\frac{1}{2}\frac{1}{1+z^{\frac{1}{2}}}\exp\left(-\frac{n}{z^{\frac{1}{2}}+1}\right)
				\end{align*}
				and therefore we conclude by \eqref{eqn:detailed_upper_bound_sigma_n}:
				\begin{align*}
					|x_n|\leq \frac{(K_0+nK_1)}{2}\epsilon \left(\frac{K_2\omega^{\frac{1}{2}}}{K_2\omega^{\frac{1}{2}}-1}\exp\left(\frac{n}{\omega^{\frac{1}{2}}K_2-1}\right)+\frac{1}{K_2\omega^{\frac{1}{2}}+1}\exp\left(-\frac{n}{\omega^{\frac{1}{2}}K_2+1}\right)\right)
				\end{align*}
				with $K_2=(2|\alpha|+2)^{-1/2}(1-C^2/\omega)^{1/4}$ and the result follows.
			\end{proof}
			\section{Expression for initial moments in \S\ref{sec:initial_conditions}}\label{app:expression_initial_moments_linear_hankel_integrals}
			\setcounter{section}{4}
			Let us define the standard moments by $\rho_n:=I^{(3)}_{\omega,\beta}[x^n]$, then we have the expressions
			\begin{align*}
				\tilde{\sigma}_0&=\rho_0,\quad\quad
				\tilde{\sigma}_1=2\rho_1-\rho_0,\quad\quad
				\tilde{\sigma}_2=8\rho_2-8\rho_1+\rho_0,\quad\quad
				\tilde{\sigma}_3=32\rho_3-48\rho_2+18\rho_1-\rho_0.
			\end{align*}
			Differentiating \eqref{eqn:closed_form_expression_rho_0} with respect to $\beta$ yields:
			\begin{align*}
				i\omega\rho_1&=-\frac{2i}{\pi}\omega^{-\frac{3}{2}}e^{i(\beta+1)\omega}\int_{0}^{\infty}\left[i\omega g_\beta\left(\frac{t}{\omega}\right)+\left(\partial_{\beta}g_\beta\left(\frac{t}{\omega}\right)\right)\right] \frac{1}{\sqrt{t}}e^{-t}dt+\begin{cases}\frac{1}{1-\beta ^2}+\frac{2 \beta  \tanh ^{-1}\left(\sqrt{\frac{\beta -1}{\beta
								+1}}\right)}{(\beta^2 -1)^{3/2}},&\beta>-1,\beta\neq 1,\\
					1/3,&\beta=1,\\
					\frac{1}{1-\beta ^2}-\frac{2 \beta  \tanh ^{-1}\left(\sqrt{\frac{\beta -1}{\beta+1}}\right)}{(\beta^2 -1)^{3/2}},&\beta<-1,
				\end{cases}
			\end{align*}
			\begin{align*}
				\hspace{-1.2cm}-\omega^2\rho_2&=-\frac{2i}{\pi}\omega^{-\frac{3}{2}}\int_{0}^{\infty}\left[-\omega^2 g_\beta\left(\frac{t}{\omega}\right)+2i\omega\left(\partial_{\beta}g_\beta\left(\frac{t}{\omega}\right)\right)+\left(\partial_{\beta}^2g_\beta\left(\frac{t}{\omega}\right)\right)\right] \frac{1}{\sqrt{t}}e^{-t}dt\\
				&\hspace{5.4cm}+\begin{cases}
					\frac{3 \beta }{\left(\beta ^2-1\right)^2}-\frac{\left(4 \beta ^2+2\right) \tanh
						^{-1}\left(\sqrt{\frac{\beta -1}{\beta +1}}\right)}{(\beta^2 -1)^{5/2}},&\beta>-1,\beta\neq1,\\
					-4/15,&\beta=1,\\
					\frac{3 \beta }{\left(\beta ^2-1\right)^2}+\frac{\left(4 \beta ^2+2\right) \tanh
						^{-1}\left(\sqrt{\frac{\beta -1}{\beta +1}}\right)}{(\beta^2 -1)^{5/2}},&\beta<-1,
				\end{cases}
			\end{align*}
			and, when $\beta\neq1$, we find additionally:
			\begin{align*}
				-i\omega^3\rho_3=& -\frac{2i}{\pi}\omega^{-\frac{3}{2}}e^{i(\beta+1)\omega}\int_{0}^{\infty}\left[-i\omega^3 g_\beta\left(\frac{t}{\omega}\right)-3\omega^2\left(\partial_{\beta}g_\beta\left(\frac{t}{\omega}\right)\right)\right.\\
				&\quad\quad\quad\quad\quad \quad\quad\quad\quad\quad \left.+3i\omega\left(\partial_{\beta}^2g_\beta\left(\frac{t}{\omega}\right)\right)+\left(\partial_{\beta}^3g_\beta\left(\frac{t}{\omega}\right)\right)\right] \frac{1}{\sqrt{t}}e^{-t}dt\\
				&\quad\quad\quad\quad\quad \quad\quad\quad\quad\quad +\frac{2}{\pi}\omega^{-1}\begin{cases}
					\frac{11 \beta ^2+4}{\left(\beta ^2-1\right)^3}-\frac{6 \beta  \left(2 \beta
						^2+3\right) \tanh ^{-1}\left(\sqrt{\frac{\beta -1}{\beta +1}}\right)}{(\beta
						-1)^{7/2} (\beta +1)^{7/2}},&\beta>-1,\beta\neq1,\\ 
					\frac{11 \beta ^2+4}{\left(\beta ^2-1\right)^3}+\frac{6 \beta  \left(2 \beta
						^2+3\right) \tanh ^{-1}\left(\sqrt{\frac{\beta -1}{\beta +1}}\right)}{(\beta
						-1)^{7/2} (\beta +1)^{7/2}},&\beta<-1.
				\end{cases}
			\end{align*}
			\section{Proof of Prop.~\ref{prop:filon_paradigm_for_alpha_zero}}
			\setcounter{section}{5}
			We recall the statement of Prop.~\ref{prop:filon_paradigm_for_alpha_zero}:
			\begin{proposition}[Filon paradigm for $I^{(3)}_{\omega,\beta}$] For any $k\in\mathbb{N}$ there is a constant $C_k>0$ such that for all $\beta\in\mathbb{R},\beta\neq-1,\omega\geq 1$ and any function $\tilde{f}\in C^{k+2}[0,1]$ with $\tilde{f}^{(j)}(\pm1)=0$ for $j=0,\dots,k$:
				\begin{align*}
					\left|I^{(3)}_{\omega,\beta}[\tilde{f}]\right|\leq C_k\left( \omega^{-(k+2)}\|\tilde{f}^{(k+1)}\|_{\infty}\frac{|\beta+1|^{k+3}-1}{|\beta+1|-1}+\omega^{-(k+2)}\log\omega\|\tilde{f}^{(k+2)}\|_{\infty}|\beta+1|^{-(k+2)}\right).
				\end{align*}
			\end{proposition}
			\begin{proof} We write
				\begin{align}\label{eqn:split_of_integral}
					I^{(3)}_{\omega,\beta}[\tilde{f}]=\int_{0}^{\omega^{-1}} H^{(1)}_0(\omega x)\tilde{f}(2x-1)e^{i\omega \beta x}dx+\int_{\omega^{-1}}^1H^{(1)}_0(\omega x)\tilde{f}(2x-1)e^{i\omega\beta x}dx.
				\end{align}
				By Taylor's theorem we have $|\tilde{f}^{(j)}(2x-1)|\leq \tilde{C}_kx^{k+1-j}\|\tilde{f}^{(k+1)}\|_\infty, j=0,\dots,k+1,$ for all $x\in[-1,1]$ and for some constant $\tilde{C}_k>0$ independent of $x$. To bound the first integral note by Lemma \ref{lem:phase_extraction_in_H_0} for $n=0$ that
				\begin{align*}
					\left|H_0^{(1)}(\omega x)\right|&\leq 2C_0(\omega x)^{-1/2},\quad \forall x>0.
				\end{align*}
				since $1+\log(1/z)\leq 2z^{-1/2}$, $|h_0(z)|\leq 2C_0z^{-1/2}$ when $z\leq 1$. Thus we have
				\begin{align}\label{eqn:estimate_close_to_zero}
					\left|\int_{0}^{\omega^{-1}} H^{(1)}_0(\omega x)\tilde{f}(2x-1)e^{i\omega \beta x}dx\right|\leq  2C_0\tilde{C}_k \omega^{-\frac{1}{2}} \|\tilde{f}^{(k+1)}\|_\infty\int_{0}^{\omega^{-1}}x^{k+\frac{1}{2}}dx\lesssim\omega^{-k-2}\|\tilde{f}^{(k+1)}\|_\infty,
				\end{align}
				where $A(\omega)\lesssim B(\omega)$ means $A(\omega)\leq K B(\omega)$ for a constant $K>0$ independent of $\omega$. Moreover, by integration by parts, we have (noting that $h_0$ is non-singular on $(0,1]$, and $\tilde{f}^{(j)}(1)=0,\, j=0,\dots k$)
				\begin{align}\nonumber
					\hspace{-0.3cm}\int_{\omega^{-1}}^1H^{(1)}_0(\omega x)\tilde{f}(2x-1)e^{i\omega \beta x}dx&=\int_{\omega^{-1}}^1h_0(\omega x)\tilde{f}(2x-1)e^{i\omega(\beta+1) x}dx\\\begin{split}
						&=\revisions{\sum}_{j=0}^{k+1}\left(\frac{-1}{i\omega (\beta+1)}\right)^{j+1}\left[e^{i\omega(\beta+1) x}\frac{\D^j}{\D x^j}\left(h_0(\omega x)\tilde{f}(2x-1)\right)\right]_{x=\omega^{-1}}\\
						&\revisions{-}\left(\frac{-1}{i\omega (\beta+1)}\right)^{k+2}\left[e^{i\omega(\beta+1) x}\frac{\D^{k+1}}{\D x^{k+1}}\left(h_0(\omega x)\tilde{f}(2x-1)\right)\right]_{x=1}\\
						&\revisions{-}\left(\frac{-1}{i\omega (\beta+1)}\right)^{k+2}\int_{\omega^{-1}}^1e^{i\omega(\beta+1) x}\frac{\D^{k+2}}{\D x^{k+2}}\left(h_0(\omega x)\tilde{f}(2x-1)\right)dx.
					\end{split}
				\end{align}
				We bound each term in turn using the Leibniz rule for the derivatives of a product:
				\begin{align}\label{eqn:bound_on_first_terms}
					\left|\left[\frac{\D^j}{\D x^j}\left(h_0(\omega x)\tilde{f}(2x-1)\right)\right]_{x=\omega^{-1}}\right|&\lesssim \sum_{l=0}^j \omega^l \left|\left[\frac{\D^lh_0}{\D x^l}\right]_{x=1}\right|\,\, \left|\tilde{f}^{(j-l)}(2\omega^{-1}-1)\right|\lesssim \omega^{-k-1+j}\|\tilde{f}^{(k+1)}\|_{\revisions{\infty}},
				\end{align}
				where we used $\left|\tilde{f}^{(l)}(-1+2\omega^{-1})\right|\leq \tilde{C}_k \omega^{k+1-l}\|\tilde{f}^{(k+1)}\|_{k+1},\, l=0,\dots,k+1$. Similarly we find
				\begin{align}\label{eqn:bound_on_second_term}
					\left|\left[\frac{\D^{k+1}}{\D x^{k+1}}\left(h_0(\omega x)\tilde{f}(2x-1)\right)\right]_{x=1}\right|&=2^{k+1}\left|\left(h_0(\omega )\tilde{f}^{(k+1)}(1)\right)\right|\lesssim \omega^{-1/2}\|\tilde{f}^{(k+1)}\|_\infty,
				\end{align}
				where the first equality holds because $\tilde{f}^{(j)}(1)=0,j=0,\dots,k$. Finally, we have
				\begin{align*}
					\left|\frac{\D^{k+2}}{\D x^{k+2}}\left(h_0(\omega x)\tilde{f}(2x-1)\right)\right|&\leq \sum_{l=0}^{k+2}\binom{k+2}{l}\left|\frac{\D^{l}}{\D x^l}h_0(\omega x)\right|\left|2^{k+2-l}\tilde{f}^{(k+2-l)}(2x-1)\right|\\
					&\lesssim \omega^{-1/2}x^{-1/2}\left|\tilde{f}^{(k+2)}(2x-1)\right|+\sum_{l=1}^{k+2}\omega^{-1/2}x^{-l-1/2}\left|\tilde{f}^{(k+2-l)}(2x-1)\right|\\
					&\lesssim \omega^{-1/2}x^{-1/2}\|\tilde{f}^{(k+2)}\|_\infty+\sum_{l=1}^{k+2}\omega^{-1/2}x^{-l-1/2}x^{l-1}\|\tilde{f}^{(k+1)}\|_{\infty}\lesssim x^{-1}\|\tilde{f}^{(k+2)}\|_\infty,
				\end{align*}
				where the final estimate holds uniformly in $\omega^{-1}\leq x\leq 1$, since in that region $\omega^{-1/2}x^{-1/2}\leq1$. Therefore,
				\begin{align}\label{eqn:bound_on_final_term}
					\left|\int_{\omega^{-1}}^1e^{i\omega(\beta+1) x}\frac{\D^{k+2}}{\D x^{k+2}}\left(h_0(\omega x)\tilde{f}(2x-1)\right)dx\right|\lesssim \|\tilde{f}^{(k+2)}\|_\infty\int_{\omega^{-1}}^{1}x^{-1}dx\lesssim \|\tilde{f}^{(k+2)}\|_\infty\log\omega.
				\end{align}
				Thus, combining \eqref{eqn:split_of_integral}--\eqref{eqn:bound_on_final_term} yields the estimate
				\begin{align*}
					\left|I^{(3)}_{\omega,\beta}[\tilde{f}]\right|\lesssim \omega^{-(k+2)}\|\tilde{f}^{(k+1)}\|_{\infty}\sum_{j=0}^{k+2}|\beta+1|^{-j}+\omega^{-(k+2)}\log\omega|\beta+1|^{-(k+2)}\|\tilde{f}^{(k+2)}\|_{\infty},
				\end{align*}
				which completes the proof.
			\end{proof}
		\end{appendix}
\end{document}